\documentclass[11pt]{amsart}

\usepackage[T1]{fontenc}
\usepackage{lmodern}
\usepackage{amsmath,amssymb,amsfonts,mathrsfs}
\usepackage{mathtools}
\usepackage{enumitem}
\usepackage{graphicx}
\usepackage{float}
\usepackage{algorithm}
\usepackage{algpseudocode}
\usepackage{microtype}
\usepackage{caption}
\usepackage{subcaption}
\usepackage{comment}
\usepackage[colorlinks=true,citecolor=blue,linkcolor=blue,urlcolor=blue]{hyperref}

\let\OriginalIncludeGraphics\includegraphics
\renewcommand{\includegraphics}[2][]{%
  \IfFileExists{#2}{\OriginalIncludeGraphics[#1]{#2}}{%
    \fbox{\parbox[c][0.28\textwidth][c]{0.86\textwidth}{\centering\small Missing figure file: \nolinkurl{#2}}}}%
}

\numberwithin{equation}{section}

\newtheorem{theorem}{Theorem}
\newtheorem{proposition}{Proposition}
\newtheorem{lemma}{Lemma}
\newtheorem{corollary}{Corollary}

\newtheorem{remark}{Remark}
\newtheorem{example}{Example}

\newcommand{\Rrig}{R}

\newcommand{\Lift}{E}

\begin{document}

\title[Minmax nonlinear elasticity]{Minmax neural-network architectures for data-to-solution value maps in nonlinear elasticity with generalized loads and variable Dirichlet data}

\author{Michael Ortiz}

\address{Division of Engineering and Applied Science, California Institute of Technology, Pasadena, CA 91125, USA.\\
Centre Internacional de M\`etodes Num\`erics en Enginyeria (CIMNE), Universitat Polit\`ecnica de Catalunya, Jordi Girona 1, 08034 Barcelona, Spain.}

\email{ortiz@caltech.edu}

\begin{abstract}
We study the data-to-solution value map for quasistatic nonlinear elasticity in the linearized-kinematics regime, allowing both generalized loads and variable Dirichlet data. Under standard direct-method hypotheses, the negative minimum potential energy is finite and locally Lipschitz, convex in the load variable, and concave in the Dirichlet datum. Its supporting slopes, and its first variations at differentiability points, are the equilibrium displacement and the Dirichlet reaction. This convex--concave structure leads to a mechanics-preserving saddle minmax architecture in which displacement atoms generate load slopes, reaction atoms generate Dirichlet slopes, and the coupling coefficients are the corresponding trace-reaction pairings. Manufactured samples are produced by prescribing displacement--reaction pairs and computing the associated ambient data and exact value labels. The resulting architecture directly approximates the negative minimum-potential-energy value map and provides mechanical subgradient readouts. Immersed representations and cell-center quadrature make the construction implementable on background grids and geometry-rich domains. We prove uniform convergence on compact data sets with respect to atom enrichment and quadrature refinement, and illustrate the method on elementary examples.
\end{abstract}

\subjclass[2020]{74B05, 74G65, 49J45, 41A30, 68T07, 65N99}
\keywords{linearized elasticity, generalized loads, Hookean energy, negative minimum potential energy, minmax neural networks, manufactured solutions, data-to-solution maps, CAD-based domains}

\maketitle

\section{Introduction}

Mesh-free and immersed discretizations were developed to reduce the dependence of numerical simulation on a body-fitted conforming mesh. Moving least squares, reproducing kernels, partition-of-unity and extended finite elements, radial-basis and maximum-entropy approximants, optimal-transportation mesh-free schemes, immersed-boundary methods, immersed finite elements, and cut finite elements all decouple, to different degrees, the description of geometry from the approximation space \cite{BelytschkoLuGu1994, LiuJunZhang1995, DuarteOden1996, BabuskaMelenk1997, BelytschkoParimiMoesSukumarUsui2003, Peskin2002, MittalIaccarino2005, Buhmann2003, Sukumar2004, Wendland2005, ArroyoOrtiz2006, SukumarWright2007, CyronArroyoOrtiz2009, BompadrePerottiCyronOrtiz2012, LiHabbalOrtiz2010, LiStalzerOrtiz2014, WeissenfelsWriggers2018, WangLiaoFanFanStainierLiLi2020, BurmanClausHansboLarsonMassing2015}. Their advantages are especially apparent in fracture, contact, evolving topology, immersed interfaces, image- or CAD-defined domains, and other geometry-rich settings in which repeated meshing is costly or unreliable. In their conventional use, however, these methods remain \emph{solvers}: they construct trial, test, quadrature, stabilization, or coupling mechanisms, and the governing variational or balance equations are then solved on the resulting discrete spaces.

The aim here is different. We use mesh-free and immersed ideas as a way to construct features and manufactured data for \emph{learning} a map associated with a boundary-value problem. The object of approximation is not a displacement field at one load, but a scalar value functional: the negative of the minimum potential energy as a function of the applied generalized load and the prescribed Dirichlet displacement. This places the work close in spirit to operator learning, where neural architectures approximate maps between function spaces \cite{LuJinPangZhangKarniadakis2021,KovachkiLiLiuAzizzadenesheliBhattacharyaStuartAnandkumar2023}, but the target here is a variational value map whose supporting slopes recover the mechanical fields. The scalar value is therefore not a loss of information: its load subgradients contain equilibrium displacements and its Dirichlet supergradients contain boundary reactions.

The geometry of the data space dictates the appropriate architecture. If the Dirichlet datum is fixed, the remaining datum enters the potential energy linearly through the load. The value functional is then a supremum of affine functions of the load datum and is convex. A single minmax envelope, equivalently a maximum of affine neurons, is structurally natural \cite{ContiOrtiz2025}. Minmax units were introduced as trainable piecewise-affine activations \cite{GoodfellowWardeFarleyMirzaEtAl2013}; a maximum of affine functions represents a convex piecewise-affine function. Generic nonconvex piecewise-affine approximation can be achieved by differences of convex minmax envelopes \cite{Hartman1959, WangSun2005}, and related minmax and polyaffine minmax constructions have been used in variational material identification \cite{ContiOrtiz2025}.

When the prescribed displacement is itself an input, the admissible class moves with the datum. Joint convexity in the full datum is then lost. However, the value functional is not structurally arbitrary: it is convex in the load and concave in the Dirichlet datum. The corresponding full-data architecture is therefore a saddle minmax, a finite zero-sum game in which displacement atoms generate the convex load slopes and reaction atoms generate the concave Dirichlet slopes. Its elementary payoff has a variational Lagrangian form in terms of Dirichlet reactions, and the coupling coefficients are determined by mechanics, rather than by arbitrary trainable intercepts. Removing the reaction side recovers the single minmax envelope of the fixed-Dirichlet or balanced full-Neumann load-only problem.

The construction also borrows from the method of manufactured solutions (MMS). In traditional MMS, an exact field is chosen and source terms and boundary data are computed that make it an exact solution. The resulting data is then used for code verification, order-of-accuracy studies, validation protocols, or uncertainty-quantification workflows \cite{SteinbergRoache1985, OberkampfTrucano2002, Roy2005, RoyOberkampf2011}. In the present paper, manufactured solutions are used differently. They provide labeled samples and exact supporting slopes for training a structure-preserving surrogate of the value functional. Thus, instead of using a manufactured solution to test a solver, we use manufactured displacement--reaction pairs to populate the atoms of a saddle minmax learner.

The paper is organized around three main steps. Section~2 formulates the mixed-boundary problem and collects the value-map theorem: well-posedness, local Lipschitz continuity, convexity in the load, concavity in the Dirichlet datum, and mechanical sensitivity. Section~3 develops the saddle minmax architecture: the exact saddle representation, manufactured support pairs, finite games, mechanical readouts, and uniform convergence. Section~4 gives one concrete implementation by Fourier atoms and cell-center quadrature. Section~5 presents an illustrative dam example, and the final section summarizes the framework.

\section{Variational value map for mixed data}

We begin by fixing the mechanical and functional setting and notation used throughout the analysis.

Let $\Omega\subset\mathbb R^d$ be the reference configuration of an elastic solid. The solid is acted upon by an externally applied load and by a prescribed displacement on a Dirichlet part of the boundary. Neumann tractions, when represented classically, are included among the external loads on $\Gamma_N$; tractions on $\Gamma_D$ are not prescribed independently, but appear as reactions. The larger dual space $X^*$ will nevertheless be used as an ambient generalized-load space.

We use the following notation consistently. The stored energy is denoted by $\mathcal E$, the trace lifting by $\Lift$, the potential energy by $\mathcal F$, and the value map by $\mathcal G$. The trace on $\Gamma_D$ is $T_D$, its adjoint is $T_D^*$, and duality pairings are indicated by their underlying spaces when this prevents ambiguity. The external-load space is denoted by ${L}_{\rm ext}$, while $X^*$ remains the ambient dual space used for analysis and for augmented manufactured data.

\subsection{Assumptions}

The following assumptions underpin subsequent analysis. 

\subsubsection{Data regularity assumptions}

\begin{itemize}[leftmargin=2.2em]
\item[\textnormal{(A1)}] \textbf{Domain regularity.}
$\Omega$ is bounded, connected, open, and Lipschitz, with outer unit normal $\nu:\partial\Omega\to S^{d-1}$.

\item[\textnormal{(A2)}] \textbf{Boundary decomposition.}
The sets $\Gamma_D,\Gamma_N\subseteq\partial\Omega$ are disjoint relatively open subsets of the boundary, their closures cover the boundary, $\overline\Gamma_D\cup\overline\Gamma_N=\partial\Omega$, the interface has zero surface measure, $\mathcal H^{d-1}(\overline\Gamma_D\cap\overline\Gamma_N)=0$, and $\mathcal H^{d-1}(\Gamma_D)>0$. Traces are understood in the Sobolev trace sense.

\item[\textnormal{(A3)}] \textbf{Data spaces and load convention.}
Let $p,q\in(1,\infty)$ satisfy $1/p+1/q=1$, and let
\begin{equation}
    X:=W^{1,p}(\Omega;\mathbb R^d),
    \quad
    G_D:=W^{1-1/p,p}(\Gamma_D;\mathbb R^d)
    =W^{1/q,p}(\Gamma_D;\mathbb R^d).
\end{equation}
The ambient generalized-load space is $X^*$. An ambient \emph{datum} is $z:=(f,g)\in Z$, where $Z:=X^*\times G_D$ is the datum space. Similarly, a displacement--reaction pair is denoted $w:=(u,R)\in{Y}$, where ${Y}:=X\times G_D^*$. We denote the Dirichlet trace map by
\begin{equation}\label{eq:dirichlet-trace-map}
    T_D:X\to G_D,
    \quad
    T_Du:=\operatorname{Tr}u|_{\Gamma_D},
\end{equation}
and denote its adjoint by $T_D^*:G_D^*\to X^*$,
\begin{equation}\label{eq:dirichlet-trace-adjoint}
    \langle T_D^*R,u\rangle_{X^*,X}:=R[T_D u],
    \quad 
    R\in G_D^*,\ u\in X ,
\end{equation}
where $R \in T_D^*G_D^*$ is to be interpreted as Dirichlet reactions or augmented forces, not as applied loads on $\Gamma_D$.

The physically prescribed external-load space is a closed linear subspace ${L}_{\rm ext} \subset X^*$, not containing independently prescribed Dirichlet-boundary tractions. The corresponding physical data space is ${L}_{\rm ext} \times G_D\subset Z$. The precise choice of ${L}_{\rm ext}$ is part of the load model. For instance, we may consider generalized loads of the form 
\begin{equation}\label{eq:classical-load-representation}
    f(\xi)=\int_\Omega b\cdot \xi\,dx+
    \langle h,\operatorname{Tr}\xi\rangle_{\Gamma_N},
    \quad \xi\in X,
\end{equation}
for suitable choices of $b$ and $h$ spanning a closed linear subspace of $X^*$. The estimates and saddle formulae below are stated on the larger space $Z$; restricting $f$ to ${L}_{\rm ext}$ gives the physical mixed-boundary problem. 

\item[\textnormal{(A4)}] \textbf{Trace lifting.}
There is a bounded linear right inverse $\Lift:G_D\to X$ of the Dirichlet trace, i.e. $T_D(\Lift g)=g$ on $\Gamma_D$.
\end{itemize}

We note that assumption (A4) is a standard consequence of the Sobolev trace theorem (e.g., \cite{AdamsFournier2003,McLean2000}) provided that $\Gamma_D$ is an admissible Lipschitz portion of the boundary. However, for an arbitrary relatively open subset $\Gamma_D\subset\partial\Omega$ this lifting property need not follow from the basic Lipschitz regularity of $\Omega$ alone, and it should therefore be retained as an explicit assumption. Related mixed-boundary regularity and trace issues are treated, for instance, in \cite{Groeger1989,AuscherBadrHallerDintelmannRehberg2015,HallerDintelmannJonssonKneesRehberg2016}.

\subsubsection{Energy-density assumptions}

Let $\mathbb{S}^d$ denote the space of symmetric $d\times d$ matrices, and let $W:\Omega\times\mathbb{S}^d\to[0,\infty)$ be the stored-energy density. We assume:

\begin{itemize}[leftmargin=2.2em]
\item[\textnormal{(A5)}] \textbf{Carath\'eodory regularity.}
The map $x\mapsto W(x,A)$ is measurable for every $A\in\mathbb{S}^d$, and $A\mapsto W(x,A)$ is continuous for a.e. $x\in\Omega$.

\item[\textnormal{(A6)}] \textbf{Strict convexity.}
For a.e. $x\in\Omega$, the map $A\mapsto W(x,A)$ is strictly convex on $\mathbb{S}^d$.

\item[\textnormal{(A7)}] \textbf{$p$-coercivity and $p$-growth.}
There are constants $\alpha,\beta>0$ and a function $a\in L^1(\Omega)$ such that, for a.e. $x\in\Omega$ and every $A\in\mathbb{S}^d$,
\begin{equation}\label{eq:p-growth}
    \alpha |A|^p-a(x)\leq W(x,A)\leq \beta(1+|A|^p)+a(x).
\end{equation}
The exponent $p\in(1,\infty)$ is arbitrary; the Hookean examples correspond to $p=2$. Continuity properties of the associated nonlinear superposition operators may be viewed in the standard Nemytskij-operator framework \cite{KrasnoselskiiRutickii1961,AppellZabrejko1990,Goebel1992}.

\item[\textnormal{(A8)}] \textbf{Local Lipschitz growth in the strain.}
There exists $C_W>0$ such that, for a.e. $x\in\Omega$ and every $A_1,A_2\in\mathbb{S}^d$,
\begin{equation}\label{eq:lip-growth}
    |W(x,A_1)-W(x,A_2)|
    \leq C_W\bigl(1+|A_1|^{p-1}+|A_2|^{p-1}\bigr)|A_1-A_2|.
\end{equation}

\item[\textnormal{(A9)}] \textbf{Uniform monotonicity of the stored energy.} The map $A \mapsto W(x,A)$ is continuously differentiable for a.e. $x \in \Omega$. With $S(x,A) := D_A W(x,A) \in \mathbb{S}^d$, there is a constant $C>0$ such that, for a.e. $x\in\Omega$ and all $A,B\in \mathbb{S}^d$,
\begin{equation}
    \big(S(x,A)-S(x,B)\big):(A-B)
    \ge 
    C \, |A-B|^p .
\end{equation}
\end{itemize}

\subsection{Problem statement}

For fixed $g\in G_D$ the admissible displacement class is
\begin{equation}
    X_g:=\{u\in X:\operatorname{Tr}u=g\text{ on }\Gamma_D\}.
\end{equation}
The lifting $u=\Lift g+v$ reduces the moving affine space to a fixed linear space,
\begin{equation}\label{eq:lifting}
    V:=\{v\in X:\operatorname{Tr}v=0\text{ on }\Gamma_D\}.
\end{equation}
Since $\mathcal H^{d-1}(\Gamma_D)>0$, Korn's inequality on $V$ gives \cite{Ciarlet1988}
\begin{equation}\label{eq:korn-mixed}
    \|v\|_{W^{1,p}(\Omega)}\leq C_K\|e(v)\|_{L^p(\Omega)}.
\end{equation}
For $z=(f,g)\in Z$ and $u\in X_g$, define the potential and stored energies as
\begin{equation}\label{eq:potential}
    \mathcal F(u;z):=\mathcal E(u)-\langle f,u\rangle_{X^*,X},
    \quad
    \mathcal E(u):=\int_\Omega W(x,e(u))\,dx.
\end{equation}
The corresponding lifted functional on the fixed space $V$ is
\begin{equation}\label{eq:lifted-G}
    \mathcal F_0(v;z)
    :=\mathcal F(\Lift g+v;z)
    =\mathcal E(\Lift g+v)-\langle f,\Lift g+v\rangle_{X^*,X}.
\end{equation}
The term $\langle f,\Lift g\rangle_{X^*,X}$ is retained. This is the lift-load term that makes the reduced problem have the same value as the unreduced potential; for $f\in{L}_{\rm ext}$ this is the physical potential energy.

\begin{remark}[Covariance under superposed infinitesimal rigid motions] \label{ZrRyFo}
{\rm The space of infinitesimal rigid displacements is
\begin{equation}\label{eq:rigid-space}
    \Rrig:=\{r(x)=a+Sx:
    a\in\mathbb R^d,
    S\in\mathbb R^{d\times d},\ S^T=-S\}.
\end{equation}
Since $e(r)=0$ for every $r\in\Rrig$, the stored energy is invariant under superposed infinitesimal rigid-body motions,
\begin{equation}\label{eq:rigid-invariance}
    \mathcal E(u+r)=\mathcal E(u),\quad r\in\Rrig.
\end{equation}
Let $r\in\Rrig$ and suppose the Dirichlet datum is changed from $g$ to $g+\operatorname{Tr}r$. Since $e(u+r)=e(u)$,
\begin{equation}
    \mathcal F(u+r;(f,g+\operatorname{Tr}r))
    =
    \mathcal F(u;z)-\langle f,r\rangle_{X^*,X} .
\end{equation}
If the load annihilates all rigid motions, then $\mathcal F(u;z)$ is invariant under the simultaneous superposition of $r$ on the displacement field and the Dirichlet datum.}
\hfill$\square$
\end{remark}

We assume the governing physical principle to be the \emph{principle of minimum potential energy}. Thus, for given data $z=(f,g)\in Z$, the problem consists equivalently of finding $u\in X_g$ that minimizes $\mathcal F(u;z)$; or finding $v\in V$ that minimizes $\mathcal F_0(v;z)$.

\subsection{Existence of solutions}

We record the direct-method existence statement on which the later value analysis rests.

\begin{proposition}[Existence of minimizers]\label{prop:existence}
Assume \textup{(A1)--(A7)}. Then, for every $z=(f,g)\in Z$, the functional $\mathcal F_0(\cdot;z)$ attains its minimum on $V$.
\end{proposition}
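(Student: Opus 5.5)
The plan is the direct method of the calculus of variations applied to $\mathcal F_0(\cdot;z)$ on the reflexive space $V$. Fix $z=(f,g)\in Z$ and write $u=\Lift g+v$. First I check that $\mathcal F_0(\cdot;z)$ is finite everywhere on $V$. The upper growth bound in (A7), together with $a\in L^1(\Omega)$, gives $\mathcal E(\Lift g+v)<\infty$ for every $v\in V$. Since $f\in X^*$, the load term is finite as well.

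\emph{Step 1 (coercivity).} By the lower bound in (A7),
\[
\mathcal F_0(v;z)\ge \alpha\|e(\Lift g+v)\|_{L^p}^p-\|a\|_{L^1}-\|f\|_{X^*}\bigl(\|\Lift\|\|g\|_{G_D}+\|v\|_X\bigr).
\]
I then use $\|e(\Lift g+v)\|_{L^p}\ge \|e(v)\|_{L^p}-\|e(\Lift g)\|_{L^p}$ and Korn's inequality \eqref{eq:korn-mixed} to obtain $\|e(v)\|_{L^p}\ge C_K^{-1}\|v\|_X$. Because $p>1$, the $p$-th power term dominates the linear term, so $\mathcal F_0(v;z)\to\infty$ as $\|v\|_X\to\infty$. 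In particular $\mathcal F_0(\cdot;z)$ is bounded below, and every minimizing sequence $(v_k)$ is bounded in $V$.

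\emph{Step 2 (compactness).} $V$ is a closed linear subspace of $X=W^{1,p}(\Omega;\mathbb R^d)$ because the trace is continuous, and $X$ is reflexive for $1<p<\infty$. Hence a subsequence of $(v_k)$ converges weakly to some $v\in V$; weak closedness of $V$ follows from its being closed and convex.

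\emph{Step 3 (weak lower semicontinuity).} This is the main point, though it is standard. The load term $v\mapsto\langle f,\Lift g+v\rangle$ is affine and continuous, hence weakly continuous. For the stored energy I show that $u\mapsto\int_\Omega W(x,e(u))\,dx$ is convex and strongly lower semicontinuous on $X$.
\begin{itemize}
\item Convexity follows from convexity of $W(x,\cdot)$ in (A6) and linearity of $e$.
\item Strong continuity: if $u_k\to u$ in $X$, then $e(u_k)\to e(u)$ in $L^p$. Passing to an a.e.\ convergent subsequence, Carath\'eodory continuity (A5), the growth bound $0\le W\le\beta(1+|A|^p)+a$, and a generalized dominated convergence argument give convergence of the integrals. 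Alternatively one can use Fatou's lemma applied to $W+a\ge0$; either suffices.
\end{itemize}
By Mazur's lemma, a convex, strongly lower semicontinuous functional is weakly lower semicontinuous. Therefore $\mathcal F_0(v;z)\le\liminf_k\mathcal F_0(v_k;z)=\inf_V\mathcal F_0(\cdot;z)$, and $v$ is a minimizer.

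I expect Step 1 to need the most care, mainly the bookkeeping of the lift term and the use of Korn's inequality on $V$ rather than on $X$. The lower semicontinuity in Step 3 is routine given convexity; only strict convexity is assumed, but plain convexity is what the argument uses, and strictness would yield uniqueness, which is not claimed.
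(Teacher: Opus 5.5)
Your proposal is correct and follows the same direct-method route as the paper. The steps match: a coercivity bound from (A7), the triangle inequality and Korn's inequality on $V$; weak compactness in the reflexive space $X$ with weak closedness of $V$; and weak lower semicontinuity of the stored energy together with weak continuity of the load term. The only real difference is that you prove weak lower semicontinuity of $\mathcal E$ yourself, via convexity from (A6), strong lower semicontinuity by Fatou applied to $W+a\ge\alpha|A|^p\ge 0$, and Mazur, whereas the paper cites the general results of Morrey, Acerbi--Fusco and Dacorogna.
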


The proof is the standard coercivity--compactness argument, included to fix the role of the assumptions.

\begin{proof}
Fix $z=(f,g)\in Z$. Since $\Lift g\in X$, the value $\mathcal F_0(0;z)=\mathcal F(\Lift g;z)$ is finite. Let $(v_j)\subset V$ be a minimizing sequence and set $u_j:=\Lift g+v_j$. The load term satisfies
\begin{equation}
    |\langle f,u_j\rangle_{X^*,X}|
    \leq \|f\|_{X^*}\|u_j\|_X
    \leq C\|f\|_{X^*}\bigl(\|\Lift g\|_X+\|e(v_j)\|_{L^p(\Omega)}\bigr),
\end{equation}
where the last step uses Korn's inequality on $V$. Using the coercivity of $W$, the triangle inequality, and Young's inequality, we obtain constants $c_1,c_2,c_3>0$, depending on $z$ but not on $j$, such that
\begin{equation}
    \mathcal F_0(v_j;z)
    \geq
    c_1\|e(v_j)\|_{L^p(\Omega)}^p
    -c_2\bigl(1+\|e(v_j)\|_{L^p(\Omega)}\bigr)-c_3.
\end{equation}
Since $(v_j)$ is minimizing and $\mathcal F_0(0;z)<\infty$, the sequence $(e(v_j))$ is bounded in $L^p(\Omega)$. Korn's inequality gives boundedness of $(v_j)$ in $X$.

Because $p>1$, the space $X$ is reflexive. Thus, after passing to a subsequence, $v_j\rightharpoonup v$ weakly in $X$ for some $v\in X$. The trace operator is continuous and linear, so $V$ is a closed linear subspace of $X$, hence weakly closed. Therefore, $v\in V$. Moreover, $e(\Lift g+v_j)\rightharpoonup e(\Lift g+v)$ weakly in $L^p(\Omega;\mathbb{S}^d)$. By the Carath\'eodory regularity, convexity, and $p$-growth assumptions on $W$, the integral functional $\mathcal E$ is weakly lower semicontinuous \cite{Morrey1952,AcerbiFusco1984,Dacorogna2008}. The load term is weakly continuous because $f\in X^*$. Consequently,
\begin{equation}
    \mathcal F_0(v;z)\leq\liminf_{j\to\infty}\mathcal F_0(v_j;z)
    =\inf_{\psi\in V}\mathcal F_0(\psi;z).
\end{equation}
Thus, $v$ attains the minimum of $\mathcal F_0(\cdot;z)$ on $V$.
\end{proof}

The preceding existence statement becomes unique once strict convexity is invoked.

\begin{corollary}[Uniqueness under strict convexity]\label{cor:uniqueness}
Assume \textnormal{(A1)--(A7)}. Then, for every datum $z=(f,g)\in Z$, the minimizer of $\mathcal F_0(\cdot;z)$ in $V$ is unique. Equivalently, the equilibrium displacement $u_z=\Lift g+v_z\in X_g$ is unique.
\end{corollary}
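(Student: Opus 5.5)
Existence is already given by Proposition~\ref{prop:existence}, so only uniqueness needs proof. The plan is to suppose two minimizers and derive a contradiction from strict convexity (A6), using Korn's inequality \eqref{eq:korn-mixed} to pass from equality of strains to equality of displacements.

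Fix $z=(f,g)$ and suppose $v_1,v_2\in V$ both minimize $\mathcal F_0(\cdot;z)$, with common value $m$, which is finite by Proposition~\ref{prop:existence}. Put $u_i:=\Lift g+v_i$ and $\bar v:=\tfrac12(v_1+v_2)\in V$.

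\emph{Convexity step.} The load term $v\mapsto\langle f,\Lift g+v\rangle_{X^*,X}$ is affine. By (A6), $W(x,\cdot)$ is convex for a.e.\ $x$, and $e$ is linear. Integrating pointwise convexity therefore gives $\mathcal F_0(\bar v;z)\le\tfrac12\mathcal F_0(v_1;z)+\tfrac12\mathcal F_0(v_2;z)=m$. Every term is finite: by (A7) and $a\in L^1$, $W(\cdot,e(u))\in L^1(\Omega)$ for every $u\in X$. Since $m$ is the minimum, equality holds, and so
\begin{equation*}
  \int_\Omega\Bigl[\tfrac12W(x,e(u_1))+\tfrac12W(x,e(u_2))-W\bigl(x,\tfrac12(e(u_1)+e(u_2))\bigr)\Bigr]dx=0 .
\end{equation*}
The integrand is nonnegative a.e. by convexity, so it vanishes a.e.

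\emph{Strictness and Korn step.} Strict convexity of $W(x,\cdot)$ now forces $e(u_1)(x)=e(u_2)(x)$ for a.e.\ $x$, that is, $e(v_1-v_2)=0$ in $L^p(\Omega)$. Because $v_1-v_2\in V$, Korn's inequality \eqref{eq:korn-mixed} gives $\|v_1-v_2\|_{W^{1,p}}\le C_K\|e(v_1-v_2)\|_{L^p}=0$, hence $v_1=v_2$. This is exactly where $\mathcal H^{d-1}(\Gamma_D)>0$ enters: it rules out nonzero rigid motions in $V$, in line with Remark~\ref{ZrRyFo}.

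\emph{Equivalence with $u_z$.} The map $v\mapsto\Lift g+v$ is a bijection $V\to X_g$ by (A4). It satisfies $\mathcal F_0(v;z)=\mathcal F(\Lift g+v;z)$ by \eqref{eq:lifted-G}. Hence minimizers correspond one-to-one, and $u_z=\Lift g+v_z$ is the unique minimizer in $X_g$.

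The only delicate point is the measure-theoretic passage from equality of integrals to pointwise equality. It needs nonnegativity of the convexity defect and finiteness of each integral. Both follow from (A6)–(A7), so I expect no real obstacle.
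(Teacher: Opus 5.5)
Your proposal is correct and follows the paper's proof essentially verbatim: midpoint convexity forces equality at the minimum, strict convexity then gives $e(u_1)=e(u_2)$ a.e., and Korn's inequality on $V$ yields $v_1=v_2$. Your explicit justification of the a.e.\ step is a detail the paper leaves implicit: each integral is finite by (A7), and the convexity defect is nonnegative.
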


This uniqueness gives a single equilibrium displacement, while the associated reaction may still be set-valued in nonsmooth settings.

\begin{proof}
By Proposition~\ref{prop:existence}, minimizers exist. Let $v_0,v_1\in V$ be two minimizers and set $u_i:=\Lift g+v_i$, $i=0,1$. Since the load term in $\mathcal F_0$ is affine in $v$, convexity of $W(x,\cdot)$ gives
\begin{equation}
    \mathcal F_0\left(\frac{v_0+v_1}{2};z\right)
    \leq \frac12\mathcal F_0(v_0;z)+\frac12\mathcal F_0(v_1;z).
\end{equation}
The right-hand side is the minimum value, so equality must hold. Strict convexity in the strain then implies $e(u_0)=e(u_1)$ a.e. Since $u_0-u_1=v_0-v_1\in V$, Korn's inequality gives $v_0=v_1$.
\end{proof}

The preceding corollary gives uniqueness of the equilibrium displacement for each datum. In later approximation arguments it will also be useful to know when equilibrium displacements associated with compact data classes form compact sets in the displacement space. This does not follow from uniqueness alone. The following strengthening records a standard sufficient condition: uniform monotonicity of the stored energy upgrades weak stability of minimizers to strong stability in $X$.

\begin{lemma}[Strong continuity and compactness of equilibrium displacements]
\label{lem:strong-continuity-compactness}
Assume {\rm (A1)--(A9)}. Then, the equilibrium map $Z \ni z=(f,g) \longmapsto u_z \in X$ is strongly continuous. Therefore, if $K\subset Z$ is compact, then $U_K:=\{u_z:z\in K\}$ is compact in $X$.
\end{lemma}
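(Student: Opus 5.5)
We plan to prove strong continuity by comparing the Euler--Lagrange equations of two nearby data and then using the uniform monotonicity (A9) together with Korn's inequality. Compactness of $U_K$ then follows at once, since the continuous image of a compact set is compact.

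\emph{Step 1: Euler--Lagrange equation and a priori bounds.} By (A9), (A7) and (A8), the stress $S(x,e(u))$ lies in $L^q$ whenever $u\in X$, with $\|S(\cdot,e(u))\|_{L^q}\le C(1+\|e(u)\|_{L^p}^{p-1})$. Indeed, the growth of $D_AW$ is bounded by the local Lipschitz constant in (A8). Consequently $\mathcal E$ is G\^ateaux differentiable on $X$, with $\langle D\mathcal E(u),\xi\rangle=\int_\Omega S(x,e(u)):e(\xi)\,dx$. Minimality of $v_z$ in Proposition~\ref{prop:existence} then gives
\begin{equation*}
\int_\Omega S(x,e(u_z)):e(\xi)\,dx=\langle f,\xi\rangle_{X^*,X}\quad\text{for all }\xi\in V .
\end{equation*}
The coercivity estimate from the proof of Proposition~\ref{prop:existence}, together with the comparison $\mathcal F_0(v_z;z)\le\mathcal F_0(0;z)$ and the growth bound (A7), shows that $\|u_z\|_X$ is bounded uniformly for $z$ in bounded subsets of $Z$. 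This is needed to control the stresses in the next step.

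\emph{Step 2: Monotonicity estimate.} Let $z_n=(f_n,g_n)\to z=(f,g)$ in $Z$, and write $u_n:=u_{z_n}$. We take the test function $\xi_n:=(u_n-\Lift g_n)-(u_z-\Lift g)\in V$. Note that $\xi_n=(u_n-u_z)-\Lift(g_n-g)$, where $\Lift(g_n-g)\to0$ in $X$ by (A4). Subtracting the two Euler--Lagrange identities tested with $\xi_n$ gives
\begin{equation*}
\int_\Omega\bigl(S(e(u_n))-S(e(u_z))\bigr):e(u_n-u_z)\,dx
=\langle f_n-f,\xi_n\rangle+\langle f,\xi_n\rangle-\langle f,\xi_n\rangle+\int_\Omega\bigl(S(e(u_n))-S(e(u_z))\bigr):e(\Lift(g_n-g))\,dx .
\end{equation*}
Simplifying, the right-hand side is $\langle f_n-f,\xi_n\rangle+\int(S_n-S):e(\Lift(g_n-g))$. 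The first term is at most $\|f_n-f\|_{X^*}\|\xi_n\|_X\to0$, because $\xi_n$ is bounded by Step 1. The second term is at most $\|S_n-S\|_{L^q}\|\Lift(g_n-g)\|_X\to0$, because the stresses are bounded in $L^q$ by Step 1. By (A9), the left-hand side is at least $C\|e(u_n-u_z)\|_{L^p}^p$. Hence $e(u_n-u_z)\to0$ in $L^p$.

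\emph{Step 3: Conclusion.} Korn's inequality \eqref{eq:korn-mixed} on $V$, applied to $u_n-u_z-\Lift(g_n-g)$, gives $u_n\to u_z$ in $X$. This proves strong continuity, and compactness of $U_K$ follows as noted at the start.

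The main obstacle is Step 1: justifying the Euler--Lagrange equation rigorously. This requires differentiating under the integral sign using only (A8) and the $C^1$ hypothesis in (A9). We would handle it by dominated convergence on difference quotients, which (A8) bounds by $C(1+|e(u)|+|e(\xi)|)^{p-1}|e(\xi)|\in L^1$. The uniform bound on the stresses also rests on this step.
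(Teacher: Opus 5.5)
Your proposal is correct and is essentially the paper's own argument. Both proofs test the difference of the Euler equations with $v_n-v\in V$, split off $\Lift(g_n-g)$, bound the stresses in $L^q$ via (A8), and conclude with (A9) and Korn's inequality on $V$. The one difference is that you skip the paper's preliminary step (weak compactness plus identification of the weak limit by uniqueness). That step is dispensable, since the paper's monotonicity estimate uses only the boundedness of $(u_n)$.
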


\begin{proof}
Let $z_n=(f_n,g_n)\to z=(f,g)$ in $Z=X^*\times G_D$, and write $u_n:=u_{z_n}=\Lift g_n+v_n$, $u:=u_z=\Lift g+v$, with $v_n,v\in V$. Since $(z_n)$ is bounded in $Z$, the coercivity estimate used in the proof of Proposition~\ref{prop:existence} gives a uniform bound for $(v_n)$ in $X$, and hence a uniform bound for $(u_n)$ in $X$. By reflexivity, after passing to a subsequence, $u_n \rightharpoonup \bar u$ weakly in $X$. The trace operator is continuous and $g_n\to g$ in $G_D$, so $T_D\bar u=g$. 

We now identify $\bar u$. For every $\xi\in X_g$, the comparison field $\xi_n:=\xi+\Lift(g_n-g)$ belongs to $X_{g_n}$ and satisfies $\xi_n\to\xi$ strongly in $X$. Since $u_n$ minimizes $E(\cdot)-\langle f_n,\cdot\rangle$ over $X_{g_n}$,
\begin{equation}
    E(u_n)-\langle f_n,u_n\rangle
    \le
    E(\xi_n)-\langle f_n,\xi_n\rangle .
\end{equation}
From the weak lower semicontinuity of $E$, the strong convergence $f_n\to f$ in $X^*$, the boundedness of $(u_n)$ in $X$, and $\xi_n\to\xi$ in $X$, we obtain
\begin{equation}
    E(\bar u)-\langle f,\bar u\rangle
    \le
    \liminf_{n\to\infty}
    \big(E(u_n)-\langle f_n,u_n\rangle\big)
    \le
    E(\xi)-\langle f,\xi\rangle .
\end{equation}
Since $\xi\in X_g$ is arbitrary, $\bar u$ minimizes the limiting problem. By Corollary~\ref{cor:uniqueness}, the minimizer is unique; hence $\bar u=u$. Therefore, every subsequence of $(u_n)$ has a further subsequence converging weakly to $u$, and consequently $u_n\rightharpoonup u$ weakly in $X$.

It remains to upgrade weak convergence to strong convergence. Under (A8)--(A9), the integral functional $\mathcal E$ is Fr\'echet differentiable on $X$, with
\begin{equation}
    \langle D\mathcal E(w),\eta\rangle_{X^*,X}
    =
    \int_\Omega S(x,e(w)):e(\eta)\,dx,
    \quad \eta\in X,
\end{equation}
and $D\mathcal E$ maps bounded subsets of $X$ into bounded subsets of $X^*$. The Euler equations are
\begin{equation}
    \langle DE(u_n)-f_n,\eta\rangle_{X^*,X}=0,
    \quad
    \langle DE(u)-f,\eta\rangle_{X^*,X}=0,
    \quad \eta\in V .
\end{equation}
Set $d_n:=v_n-v\in V$, $r_n:=\Lift(g_n-g)$. Then, $u_n-u=d_n+r_n$, $r_n\to0$ strongly in $X$. Subtracting the two Euler equations and testing with $d_n$ gives
\begin{equation}
    \langle DE(u_n)-DE(u),d_n\rangle_{X^*,X}
    =
    \langle f_n-f,d_n\rangle_{X^*,X}.
\end{equation}
Hence,
\begin{equation}
\begin{split}
    &
    \langle DE(u_n)-DE(u),u_n-u\rangle_{X^*,X}
    = \\ &
    \langle f_n-f,d_n\rangle_{X^*,X}
    +
    \langle DE(u_n)-DE(u),r_n\rangle_{X^*,X}.
\end{split}
\end{equation}
The first term tends to zero because $f_n\to f$ in $X^*$ and $(d_n)$ is bounded in $X$. The second term tends to zero because $r_n\to0$ strongly in $X$, while $(DE(u_n)-DE(u))$ is bounded in $X^*$ on bounded subsets of $X$, by the growth assumption {\rm (A8)}. Therefore,
\begin{equation}
    \langle DE(u_n)-DE(u),u_n-u\rangle_{X^*,X}\to0 .
\end{equation}
By {\rm (A9)},
\begin{equation}
\begin{split}
    &
    \langle DE(u_n)-DE(u),u_n-u\rangle_{X^*,X}
    = \\ &
    \int_\Omega
    \big(S(x,e(u_n))-S(x,e(u))\big):(e(u_n)-e(u))\,dx
    \ge \\ &
    C \, \|e(u_n-u)\|_{L^p(\Omega)}^p .
\end{split}
\end{equation}
Hence, $e(u_n-u)\to0$ strongly in $L^p(\Omega;\mathbb{S}^d)$. Since $d_n=u_n-u-r_n\in V$ and $r_n\to0$ in $X$, Korn's inequality on $V$ gives
\begin{equation}
\begin{split}
    \|d_n\|_X &\le C_K\|e(d_n)\|_{L^p(\Omega)}
    \\ & \le C_K\big(\|e(u_n-u)\|_{L^p(\Omega)}+\|e(r_n)\|_{L^p(\Omega)}\big)
    \to0 .
\end{split}
\end{equation}
Consequently,
\begin{equation}
    \|u_n-u\|_X\le \|d_n\|_X+\|r_n\|_X\to0 ,
\end{equation}
which proves strong continuity of $z\mapsto u_z$. 

If $K\subset Z$ is compact, then $U_K$ is the continuous image of the compact set $K$ under the map $z\mapsto u_z$. Hence, $U_K$ is compact in $X$.
\end{proof}

We note that the general value-functional properties below, including finiteness, lower semicontinuity, local Lipschitz continuity, and the convex--concave structure in the data, are based on hypotheses {\rm (A1)--(A8)}, whereas the compactness conclusion requires the additional hypothesis {\rm (A9)}. 

\subsection{The full-data value functional}

With the boundary-value problem in place, we now pass from equilibrium fields to the scalar value functional that will be learned.

Define the \emph{value functional} $\mathcal G:Z\to\mathbb R\cup\{+\infty\}$ as
\begin{equation}\label{eq:value}
    \mathcal G(z)
    :=
    -
    \inf_{v\in V}\mathcal F_0(v;z)
    =
    \sup_{v\in V}\{-\mathcal E(\Lift g+v)+\langle f,\Lift g+v\rangle_{X^*,X}\} .
\end{equation}
Thus, $\mathcal G(z)$ is the negative minimum potential energy attained at datum $z$. By Corollary~\ref{cor:uniqueness}, this minimum is attained by a unique $v_z\in V$. We write
\begin{equation}\label{eq:unique-equilibrium}
    u_z:=\Lift g+v_z\in X_g
\end{equation}
for the corresponding equilibrium displacement. By Remark~\ref{ZrRyFo}, it follows that
\begin{equation}
    \mathcal G(f,g+\operatorname{Tr}r)
    =
    \mathcal G(z)+\langle f,r\rangle_{X^*,X}.
\end{equation} 
Thus, if the load $f$ annihilates all rigid motions, $\mathcal G(z)$ is invariant under superposed rigid-body motions on the Dirichlet datum.

The next result collects the basic analytic properties needed for approximation.

\begin{proposition}[Properties of the value functional]\label{prop:value-properties}
Assume \textnormal{(A1)--(A8)}. Then, $\mathcal G:Z\to\mathbb R$ is finite-valued, proper, strongly lower semicontinuous, and locally Lipschitz continuous.
\end{proposition}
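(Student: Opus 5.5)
Finiteness follows directly from Proposition~\ref{prop:existence} together with the coercivity estimate from its proof. Fix $z=(f,g)\in Z$. Testing with $v=0$ gives
\begin{equation*}
  \mathcal G(z)\ge -\mathcal E(\Lift g)+\langle f,\Lift g\rangle_{X^*,X} > -\infty,
\end{equation*}
since the $p$-growth bound in (A7) makes $\mathcal E(\Lift g)$ finite. In the other direction, the lower bound
\begin{equation*}
  \mathcal F_0(v;z)\ge c_1\|e(v)\|^p_{L^p}-c_2\bigl(1+\|e(v)\|_{L^p}\bigr)-c_3
\end{equation*}
is bounded below uniformly in $v\in V$, so $\mathcal G(z)<+\infty$. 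Hence $\mathcal G$ is real-valued, and in particular proper. I will record the constants explicitly. They can be chosen to depend only on an upper bound $M\ge \|f\|_{X^*}+\|g\|_{G_D}$, using $\|\Lift g\|_X\le\|\Lift\|\,\|g\|$ and Young's inequality. This yields a uniform a priori bound
\begin{equation*}
  \|e(v_z)\|_{L^p}+\|u_z\|_X\le C(M)
\end{equation*}
for minimizers over the ball $B_M\subset Z$, which is the main ingredient for the Lipschitz step.

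For local Lipschitz continuity I will compare values via admissible competitors rather than differentiating. Take $z_i=(f_i,g_i)\in B_M$ for $i=1,2$, with minimizers $u_i=\Lift g_i+v_i$. Since $\xi:=u_1+\Lift(g_2-g_1)\in X_{g_2}$, we have
\begin{equation*}
  -\mathcal G(z_2)\le \mathcal E(\xi)-\langle f_2,\xi\rangle ,
  \qquad
  -\mathcal G(z_1)=\mathcal E(u_1)-\langle f_1,u_1\rangle ,
\end{equation*}
and subtracting gives
\begin{equation*}
  \mathcal G(z_1)-\mathcal G(z_2)\le \bigl[\mathcal E(\xi)-\mathcal E(u_1)\bigr]-\langle f_2-f_1,u_1\rangle-\langle f_2,\Lift(g_2-g_1)\rangle .
\end{equation*}
The two load terms are bounded by $C(M)\bigl(\|f_1-f_2\|_{X^*}+\|g_1-g_2\|_{G_D}\bigr)$. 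For the energy difference I apply (A8) pointwise and then Hölder's inequality with exponents $q=p/(p-1)$ and $p$:
\begin{equation*}
  |\mathcal E(\xi)-\mathcal E(u_1)|\le C_W\bigl\|1+|e(\xi)|^{p-1}+|e(u_1)|^{p-1}\bigr\|_{L^q}\,\|e(\Lift(g_2-g_1))\|_{L^p}\le C(M)\,\|g_1-g_2\|_{G_D}.
\end{equation*}
Exchanging the roles of $z_1$ and $z_2$ then gives
\begin{equation*}
  |\mathcal G(z_1)-\mathcal G(z_2)|\le L(M)\,\|z_1-z_2\|_Z \quad\text{on } B_M .
\end{equation*}

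Strong lower semicontinuity is then immediate, since $\mathcal G$ is even continuous. As an independent check, $\mathcal G$ is a supremum over $v\in V$ of the maps $z\mapsto-\mathcal F_0(v;z)$. Each of these is continuous in $z$: it is affine in $f$, and continuous in $g$ by (A8) and the boundedness of $\Lift$. A supremum of continuous functions is lower semicontinuous.

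The main obstacle is making the a priori bound $C(M)$ genuinely uniform over the ball, so that the Hölder factor $\|e(\xi)\|^{p-1}_{L^p}$ stays controlled. The proof of Proposition~\ref{prop:existence} only produced constants "depending on $z$", so I will redo that estimate carefully in terms of $M$. I will use that the minimal value is at most $\mathcal F_0(0;z)\le C(M)$ by the upper growth bound, and combine this with Korn's inequality \eqref{eq:korn-mixed} to bound $\|e(v_z)\|_{L^p}$ by a constant depending only on $M$, $\alpha$, $\beta$, $\|a\|_{L^1}$, $C_K$, and $\|\Lift\|$.
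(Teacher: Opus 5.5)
Your proposal is correct and follows essentially the same route as the paper. Finiteness comes from testing with $v=0$ together with the coercivity bound. Local Lipschitz continuity uses the lifted competitor $u_1+\Lift(g_2-g_1)\in X_{g_2}$, with (A8) controlling the energy difference and then exchanging the roles of $z_1$ and $z_2$. Lower semicontinuity follows because $\mathcal G$ is a supremum of continuous maps. Your explicit H\"older step and your uniform a priori bound on data balls spell out details the paper only asserts.
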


We give the details, since the estimates also guide the later stability arguments.

\begin{proof}
Testing \eqref{eq:value} with $v=0$ gives $\mathcal G(z)>-\infty$. Conversely, for $u=\Lift g+v$,
\begin{equation}
    |\langle f,u\rangle_{X^*,X}|
    \leq |f|_{X^*}|u|_X,
    \quad
    |u|_X\leq C\bigl(|g|_{G_D}+|e(v)|_{L^p}\bigr).
\end{equation}
The coercivity of $W$ and Young's inequality give a uniform upper bound for $-\mathcal F_0(v;z)$; hence $\mathcal G(z)<+\infty$. Thus $\mathcal G$ is finite-valued and proper.

For fixed $v\in V$, the map
\begin{equation}
    z=(f,g)\longmapsto
    -\mathcal E(\Lift g+v)+\langle f,\Lift g+v\rangle_{X^*,X}
\end{equation}
is continuous on $Z$. Indeed, if $z_j=(f_j,g_j)\to z=(f,g)$ in $Z$, then $\Lift g_j\to \Lift g$ in $X$, the load terms converge by the duality pairing, and the energy term converges by \eqref{eq:lip-growth}. Since $\mathcal G$ is the pointwise supremum of these continuous functions over $v\in V$, it is strongly lower semicontinuous.

It remains to prove local Lipschitz continuity. Fix $R>0$ and suppose
\begin{equation}
    |z_i|_Z\leq R,\qquad z_i=(f_i,g_i),\qquad i=1,2.
\end{equation}
Let $u_i=\Lift g_i+v_i\in X_{g_i}$ be the corresponding equilibrium displacement. Comparing the minimum at $u_i$ with the admissible field $\Lift g_i$ and using coercivity, Korn's inequality on $V$, the boundedness of $\Lift$, and Young's inequality gives
\begin{equation}
    |u_i|_X\leq M_R,\qquad i=1,2,
\end{equation}
for a constant $M_R$ depending only on $R$ and on the constants in the assumptions. Set
\begin{equation}
    \delta f:=f_2-f_1,\qquad \delta g:=g_2-g_1.
\end{equation}
Since the admissible spaces move with the Dirichlet datum, we compare across the two spaces by lifting the boundary mismatch. Define
\begin{equation}
    \widehat u_1:=u_1+\Lift \delta g .
\end{equation}
Then $T_D\widehat u_1=g_2$, hence $\widehat u_1\in X_{g_2}$. Using $\widehat u_1$ as a competitor for the datum $z_2$ gives
\begin{equation}
\begin{split}
    \mathcal G(z_1)-\mathcal G(z_2)
    & \leq
    \bigl(\langle f_1,u_1\rangle_{X^*,X}-\mathcal E(u_1)\bigr)
    -
    \bigl(\langle f_2,\widehat u_1\rangle_{X^*,X}-\mathcal E(\widehat u_1)\bigr)  
    \\ & =
    \langle f_1-f_2,u_1\rangle_{X^*,X}
    -
    \langle f_2,\Lift\delta g\rangle_{X^*,X}
    +
    \mathcal E(\widehat u_1)-\mathcal E(u_1).
\end{split}
\end{equation}
The first two terms are bounded by
\begin{equation}
    |\langle f_1-f_2,u_1\rangle_{X^*,X}|
    +
    |\langle f_2,\Lift\delta g\rangle_{X^*,X}|
    \leq
    C_R\bigl(|f_1-f_2|_{X^*}+|g_1-g_2|_{G_D}\bigr).
\end{equation}
Moreover,
\begin{equation}
    |\widehat u_1|_X
    \leq
    M_R+|\Lift|_{\mathcal L(G_D,X)}|g_2-g_1|_{G_D},
\end{equation}
and, after increasing the constant on the bounded data ball if necessary, the local Lipschitz growth assumption \eqref{eq:lip-growth} gives
\begin{equation}
    |\mathcal E(\widehat u_1)-\mathcal E(u_1)|
    \leq
    C_R|\Lift\delta g|_X
    \leq
    C_R|g_1-g_2|_{G_D}.
\end{equation}
Therefore,
\begin{equation}
    \mathcal G(z_1)-\mathcal G(z_2)
    \leq
    C_R|z_1-z_2|_Z .
\end{equation}

Interchanging the roles of $z_1$ and $z_2$, and using the competitor
\begin{equation}
    \widehat u_2:=u_2+\Lift(g_1-g_2)\in X_{g_1},
\end{equation}
gives the reverse estimate
\begin{equation}
    \mathcal G(z_2)-\mathcal G(z_1)
    \leq
    C_R|z_1-z_2|_Z .
\end{equation}
Consequently,
\begin{equation}
    |\mathcal G(z_1)-\mathcal G(z_2)|
    \leq
    C_R|z_1-z_2|_Z ,
\end{equation}
which proves local Lipschitz continuity.
\end{proof}

The saddle geometry of the value map is made precise in the following proposition.

\begin{proposition}[Separate convexity--concavity of the value functional]\label{prop:value-convex-concave}
Assume \textnormal{(A1)--(A7)}. Let $z = (f,g) \in Z$. Then, the mixed-boundary value functional $\mathcal G(z)$ is convex in the load variable $f$ and concave in the Dirichlet variable $g$, i.e., for every fixed $g\in G_D$, the map $X^*\ni f\mapsto \mathcal G(z)$ is convex, and, for every fixed $f\in X^*$, the map $G_D\ni g\mapsto \mathcal G(z)$ is concave. 
\end{proposition}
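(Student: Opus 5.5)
Convexity in $f$ follows directly from the representation \eqref{eq:value}. For fixed $g\in G_D$ we rewrite $\mathcal G(f,g)=\sup_{u\in X_g}\{\langle f,u\rangle_{X^*,X}-\mathcal E(u)\}$, which is legitimate because $v\mapsto \Lift g+v$ is a bijection of $V$ onto $X_g$. For each fixed $u\in X_g$, the map $f\mapsto\langle f,u\rangle-\mathcal E(u)$ is affine and continuous on $X^*$, since $\mathcal E(u)$ is finite by the growth bound in (A7). A pointwise supremum of affine functions is convex. Finiteness of the supremum was already established in Proposition~\ref{prop:value-properties}, but it is not needed for convexity itself; convexity also holds for extended-valued functions. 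This part needs only the definition.

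Concavity in $g$ needs a different mechanism, because $g$ enters through the moving admissible class rather than linearly. Fix $f\in X^*$, data $g_0,g_1\in G_D$, and $\lambda\in[0,1]$, and set $g_\lambda:=(1-\lambda)g_0+\lambda g_1$. The goal is $\mathcal G(f,g_\lambda)\ge(1-\lambda)\mathcal G(f,g_0)+\lambda\mathcal G(f,g_1)$, which is equivalent to convexity of $m(g):=\inf_{u\in X_g}\mathcal F(u;(f,g))=-\mathcal G(f,g)$. I would prove this by viewing $m$ as an infimal projection of a jointly convex function. The function $\Phi(u,g):=\mathcal E(u)-\langle f,u\rangle+\iota_{\{T_Du=g\}}$ is jointly convex on $X\times G_D$. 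Indeed, $\mathcal E$ is convex because $W(x,\cdot)$ is convex by (A6) and $e$ is linear, the load term is linear, and the constraint set $\{(u,g):T_Du=g\}$ is a linear subspace, namely the graph of $T_D$. The marginal $m(g)=\inf_u\Phi(u,g)$ is then convex.

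Concretely, take any $u_i\in X_{g_i}$. Linearity of $T_D$ gives $u_\lambda:=(1-\lambda)u_0+\lambda u_1\in X_{g_\lambda}$. Convexity of $\mathcal E$ and linearity of the load term then give
\begin{equation*}
m(g_\lambda)\le\mathcal F(u_\lambda;(f,g_\lambda))\le(1-\lambda)\mathcal F(u_0;(f,g_0))+\lambda\mathcal F(u_1;(f,g_1)).
\end{equation*}
Taking infima over $u_0$ and $u_1$ independently yields the claim. By Proposition~\ref{prop:existence} we may equally take $u_i=u_{z_i}$ to be the minimizers, which avoids even the infimum step.

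The only point requiring care is bookkeeping with the lifted form \eqref{eq:lifted-G}. There the term $\langle f,\Lift g\rangle$ and the argument $\Lift g+v$ both depend on $g$, so I would work with the unlifted $\mathcal F$ on $X_g$ and check that the two values coincide. I expect no genuine obstacle, since all ingredients are linear except $\mathcal E$, which is convex. The sign convention is where an error could slip in: convexity of the minimum energy in $g$ corresponds to concavity of the negative value $\mathcal G$. Strict convexity and the growth conditions are used only to ensure finiteness, through Proposition~\ref{prop:value-properties}.
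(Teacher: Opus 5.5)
Your proposal is correct and follows essentially the same route as the paper. Convexity in $f$ comes from writing $\mathcal G(\cdot,g)$ as a pointwise supremum of affine functions of $f$. Concavity in $g$ comes from the admissible convex combination $u_\lambda=(1-\lambda)u_0+\lambda u_1\in X_{g_\lambda}$, together with convexity of $\mathcal E$ and linearity of the load term and of $T_D$.

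The only difference is minor. The paper takes $u_i$ to be the minimizers from Proposition~\ref{prop:existence}. Your version uses arbitrary competitors followed by independent infima, so it does not need existence of minimizers.
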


This structural observation is the point of departure for the minmax construction below.

\begin{proof}
Fix $g\in G_D$. For each fixed $v\in V$ the right-hand side of \eqref{eq:value} is an affine function of $f$. Hence, $f\mapsto\mathcal G(z)$ is a supremum of affine functions and is therefore convex.

Next, fix $f\in X^*$, and write
\begin{equation}
    J_f(u):=\mathcal E(u)-\langle f,u\rangle_{X^*,X}.
\end{equation}
The functional $J_f$ is convex on $X$, because $\mathcal E$ is convex and the load term is linear. Let $g_0,g_1\in G_D$, $\theta\in[0,1]$, and set $g_\theta:=(1-\theta)g_0+\theta g_1$. By Proposition~\ref{prop:existence}, choose minimizers $u_i\in X_{g_i}$ of $J_f$ over $X_{g_i}$, $i=0,1$. Then,
\begin{equation}
    u_\theta:=(1-\theta)u_0+\theta u_1\in X_{g_\theta}.
\end{equation}
Consequently,
\begin{align}
    \inf_{u\in X_{g_\theta}}J_f(u)
    &\leq J_f(u_\theta) 
    \leq (1-\theta)J_f(u_0)+\theta J_f(u_1) \\
    &=(1-\theta)\inf_{u\in X_{g_0}}J_f(u)
      +\theta\inf_{u\in X_{g_1}}J_f(u).
\end{align}
Multiplication by $-1$ gives
\begin{equation}
    \mathcal G(f,g_\theta)
    \geq (1-\theta)\mathcal G(f,g_0)+\theta\mathcal G(f,g_1),
\end{equation}
which proves concavity in $g$.
\end{proof}

\begin{example}[An elementary truss example] \label{oE9UI9}
{\rm
\begin{figure}[h!]
\centering
\begin{subfigure}[b]{0.49\textwidth}
\centering
\includegraphics[width=\textwidth]{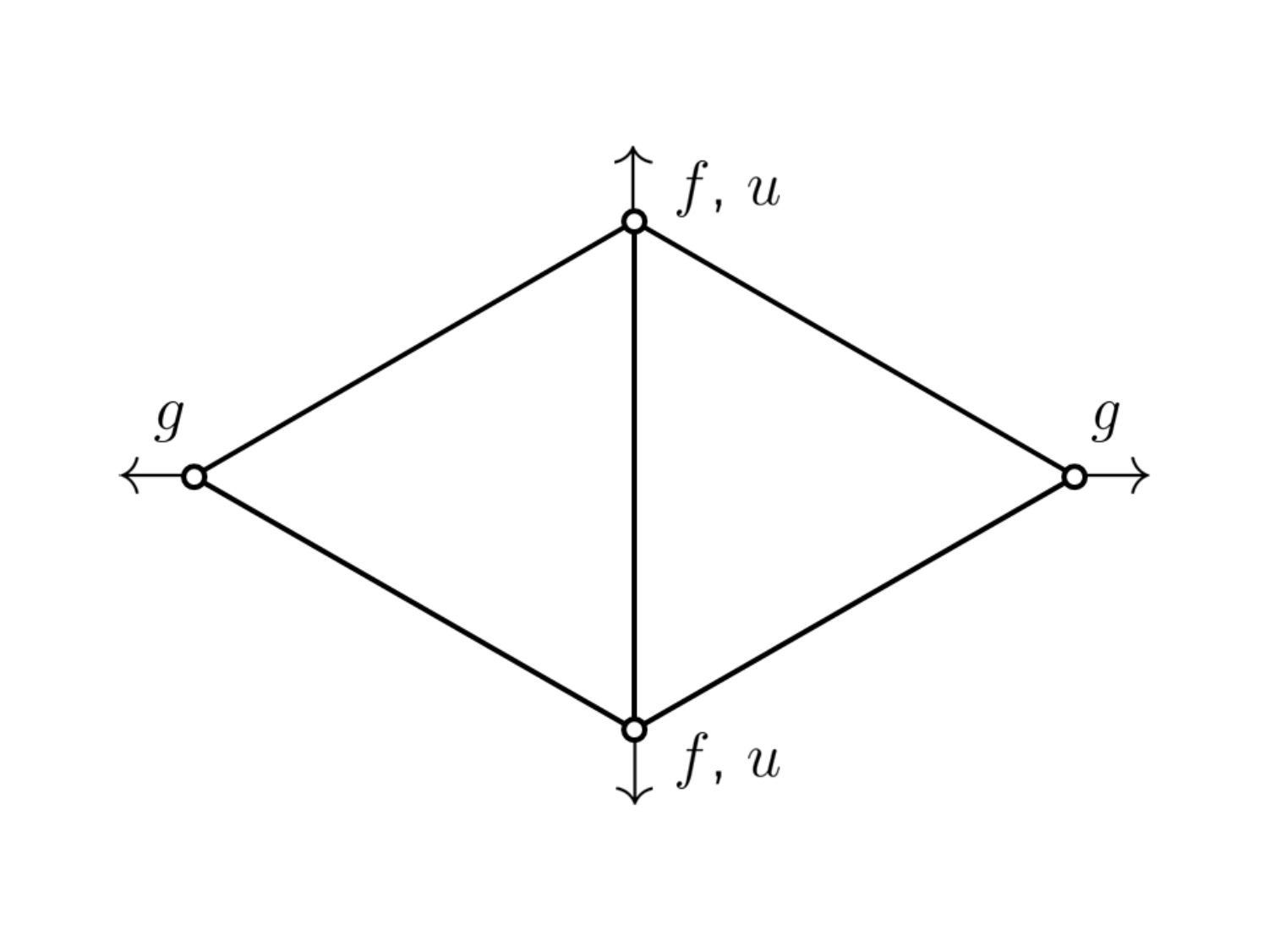}
\caption{Truss structure} \label{fig:Truss1}
\end{subfigure}
\hfill
\begin{subfigure}[b]{0.49\textwidth}
\centering
\includegraphics[width=\textwidth]{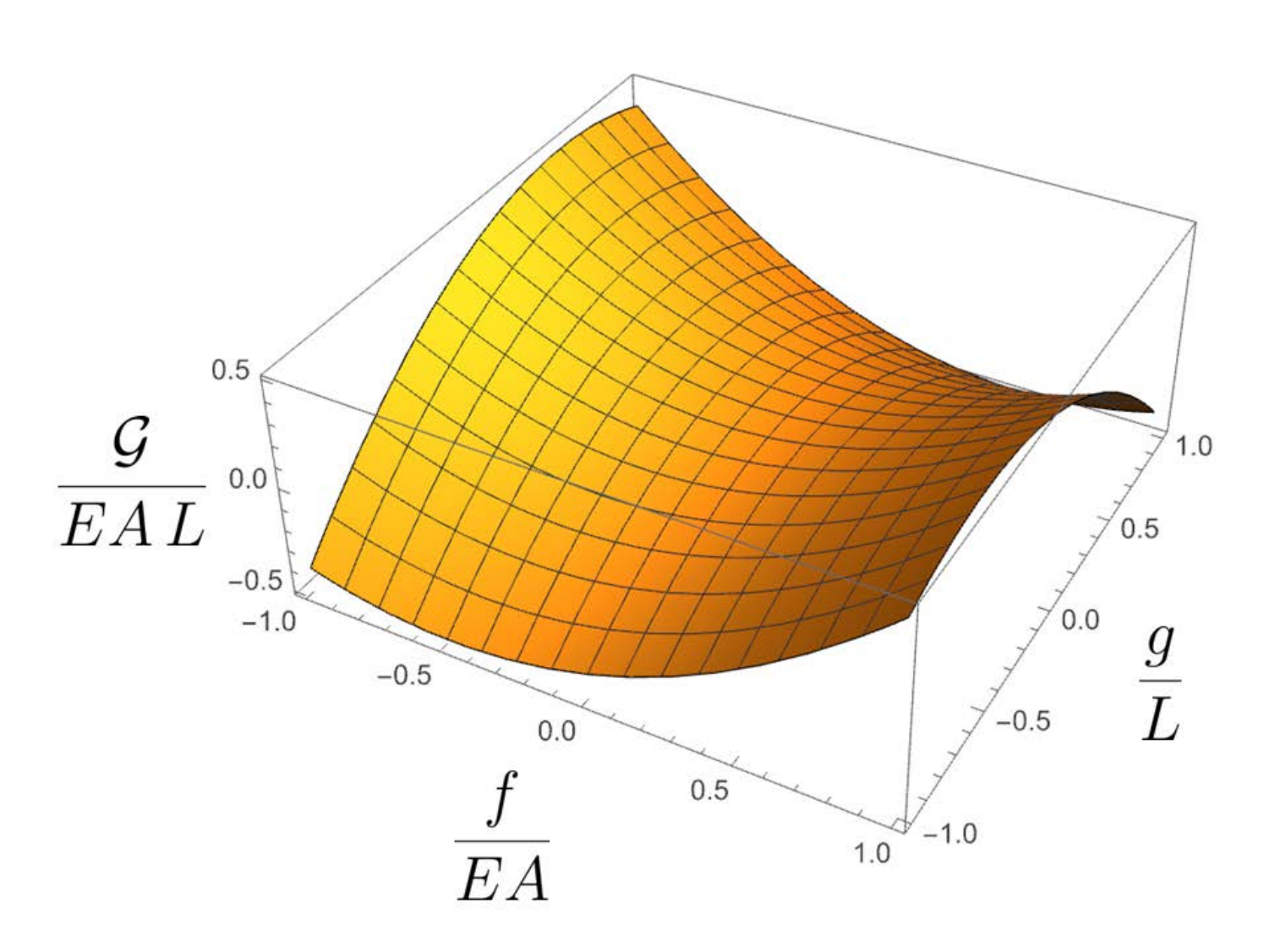}
\caption{Value function} \label{fig:Truss2}
\end{subfigure}
\caption{Simple symmetric truss example. (a) Lines represent bars of length $L$ and stiffness $EA/L$, open circles represent articulated joints, $f$ is an applied force, $g$ is a prescribed displacement, and $u$ is the displacement degree of freedom. (b) Value function $\mathcal G=-\min\mathcal F$, showing convexity in $f$ and concavity in $g$.} \label{fig:Truss}
\end{figure}

For the truss in Fig.~\ref{fig:Truss}(a), with $z=(f,g)$, direct minimization of
\begin{equation}
    \mathcal F(u;z)
    =
    \frac{{EA} L}{2} \Big(\frac{2 u}{L}\Big)^2 
    +
    4 \frac{{EA} L}{2} \Big(\frac{u+g}{2L}\Big)^2
    - 2 f u
\end{equation}
gives
\begin{equation}
    u_z=\frac{2 f L}{5 {EA}}-\frac{g}{5},
    \qquad
    \mathcal G(f,g)
    =
    \frac{2 f^2 L}{5 {EA}}
    -
    \frac{2 {EA} g^2}{5 L}
    -
    \frac{2 f g}{5} .
\end{equation}
Thus the value is convex in $f$ and concave in $g$, as predicted by Proposition~\ref{prop:value-convex-concave}.
} \hfill$\square$
\end{example}

\subsection{Sensitivity with respect to the data}\label{sec:sensitivity}

The value functional is locally Lipschitz and convex--concave in the separate data variables. Thus, its mechanical readout does not require classical differentiability. The appropriate objects are supporting slopes: load subgradients for the convex variable and Dirichlet supergradients for the concave variable. This places the construction within standard nonsmooth convex analysis, envelope sensitivity, and variational perturbation theory \cite{Danskin1967, Rockafellar1970, EkelandTemam1999, RockafellarWets1998, BonnansShapiro2000, HiriartUrrutyLemarechal1993}.

For $z=(f,g)\in Z$, define the load subdifferential and Dirichlet superdifferential as
\begin{subequations} \label{OtNsVO}
\begin{align}
    \partial_f\mathcal G(z)
    &:=\left\{u\in X \,:\,
    \mathcal G(\tilde f,g)\geq \mathcal G(f,g)
    +\langle \tilde f-f,u\rangle_{X^*,X} ,
    \;\forall\tilde f\in X^*\right\},
    \\
    \partial_g\mathcal G(z)
    &:=\left\{R\in G_D^* \,:\,
    \mathcal G(f,\tilde g)\leq \mathcal G(f,g)
    +R[\tilde g-g] ,
    \;\forall\tilde g\in G_D\right\}.
\end{align}
\end{subequations}
Let $u_z\in X_g$ be the unique equilibrium displacement. The convex subdifferential of the finite continuous convex functional $\mathcal E:X\to\mathbb R$ is
\begin{equation}
    \partial\mathcal E(u)
    :=\{P\in X^*:
    \mathcal E(\tilde u)\geq \mathcal E(u)+\langle P,\tilde u-u\rangle_{X^*,X}
    \;\text{for all }\tilde u\in X\}.
\end{equation}
At the constrained minimizer $u_z$, the Fermat rule and the standard convex subdifferential sum rule applied to
$\mathcal E(u)-\langle f,u\rangle_{X^*,X}+I_{X_g}(u)$ give
\begin{equation}
    f\in \partial\mathcal E(u_z)+N_{X_g}(u_z).
\end{equation}
Equivalently, there exists $P_z\in\partial\mathcal E(u_z)$ such that
\begin{equation}\label{eq:nonsmooth-equilibrium}
    \langle P_z-f,\eta\rangle_{X^*,X}=0,
    \quad \eta\in V .
\end{equation}
Such a $P_z$ is an admissible internal force at equilibrium. For any such $P_z$, define the Dirichlet reaction as
\begin{equation}\label{eq:Dirichlet-reaction}
    R_z[\eta]
    :=\langle f-P_z,\Lift\eta\rangle_{X^*,X},
    \quad \eta\in G_D .
\end{equation}
This definition is independent of the chosen lifting: if two liftings of $\eta$ differ, their difference lies in $V$, and \eqref{eq:nonsmooth-equilibrium} applies. Thus, $R_z\in G_D^*$ and $f=P_z+T_D^*R_z \in X^*$. In the smooth case $P_z=D\mathcal E(u_z)$, and \eqref{eq:Dirichlet-reaction} reduces to the usual weak reaction formula. If the load is classical and the fields are smooth, integration by parts gives
\begin{equation}
    R_z[\eta]=-
    \int_{\Gamma_D}(\sigma_z\nu)\cdot\eta\,ds,
\end{equation}
with the sign convention used throughout this paper.

The preceding construction identifies the reaction associated with any admissible internal force at equilibrium. We now record the corresponding sensitivity statement for the value functional. The result is nonsmooth in form: the equilibrium displacement is a supporting slope in the load variable, while the Dirichlet reaction is a supporting slope, with the opposite inequality, in the prescribed-displacement variable.

\begin{proposition}[Nonsmooth sensitivity and differentiable special case]\label{prop:direct-sensitivity}
Assume \textnormal{(A1)--(A7)}. Let $z=(f,g)\in Z$, let $u_z\in X_g$ be the unique equilibrium displacement, and let $P_z\in\partial\mathcal E(u_z)$ satisfy \eqref{eq:nonsmooth-equilibrium}. Define $R_z$ by \eqref{eq:Dirichlet-reaction}. Then,
\begin{equation}\label{eq:nonsmooth-sensitivity-inclusion}
    u_z\in \partial_f\mathcal G(z),
    \quad
    R_z\in \partial_g\mathcal G(z).
\end{equation}
Equivalently,
\begin{align}
    \mathcal G(\tilde f,g)
    &\geq \mathcal G(f,g)+\langle \tilde f-f,u_z\rangle_{X^*,X},\label{eq:load-support}\\
    \mathcal G(f,\tilde g)
    &\leq \mathcal G(f,g)+R_z[\tilde g-g],\label{eq:dirichlet-support}
\end{align}
for all $\tilde f\in X^*$ and all $\tilde g\in G_D$.

If, in addition, $\mathcal E$ is Fr\'echet differentiable at $u_z$ and $\mathcal G$ is Fr\'echet differentiable at $z$, then $P_z=D\mathcal E(u_z)$ and, for every $\delta z=(\delta f,\delta g)\in Z$,
\begin{equation}\label{eq:DG-formula}
    D\mathcal G(z)[\delta z]
    =\langle\delta f,u_z\rangle_{X^*,X}+R_z[\delta g].
\end{equation}
Thus, the classical sensitivity pair is $w_z=(u_z,R_z)\in Y$, with
\begin{equation}\label{eq:sensitivity-components}
    \frac{\delta\mathcal G}{\delta f}(z)=u_z\in X\simeq X^{**},
    \quad
    \frac{\delta\mathcal G}{\delta g}(z)=R_z\in G_D^* .
\end{equation}
\end{proposition}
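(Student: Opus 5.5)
The plan is to prove the two support inequalities \eqref{eq:load-support} and \eqref{eq:dirichlet-support} directly from the variational definition \eqref{eq:value}, by a competitor argument. Differentiability will only be used at the end, to turn these supporting slopes into the derivative formula \eqref{eq:DG-formula}.

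\emph{Load inequality.} Fix $g$ and take any $\tilde f\in X^*$. Since $u_z\in X_g$ is admissible for the datum $(\tilde f,g)$, the supremum in \eqref{eq:value} gives
$\mathcal G(\tilde f,g)\ge -\mathcal E(u_z)+\langle\tilde f,u_z\rangle_{X^*,X}$.
The right-hand side equals $\mathcal G(f,g)+\langle\tilde f-f,u_z\rangle_{X^*,X}$, because $u_z$ attains the supremum at $f$. This is \eqref{eq:load-support}.

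\emph{Dirichlet inequality.} Fix $f$ and take any $\tilde g\in G_D$. Let $\tilde u\in X_{\tilde g}$ be the minimizer from Proposition~\ref{prop:existence}, and set $\eta:=\tilde u-u_z$, so that $T_D\eta=\tilde g-g$.
\begin{itemize}[leftmargin=2.2em]
\item The subgradient inequality for $P_z\in\partial\mathcal E(u_z)$ gives $\mathcal E(\tilde u)\ge\mathcal E(u_z)+\langle P_z,\eta\rangle$.
\item Hence $\mathcal G(f,\tilde g)=\langle f,\tilde u\rangle-\mathcal E(\tilde u)\le \mathcal G(f,g)+\langle f-P_z,\eta\rangle$.
\item Write $\eta=\Lift(\tilde g-g)+\zeta$ with $\zeta\in V$. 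By \eqref{eq:nonsmooth-equilibrium}, $\langle f-P_z,\zeta\rangle=0$.
\item Therefore $\langle f-P_z,\eta\rangle=R_z[\tilde g-g]$ by \eqref{eq:Dirichlet-reaction}. Its independence of the lifting has already been noted, which gives \eqref{eq:dirichlet-support}.
\end{itemize}
The existence of $P_z$ is the Fermat/sum-rule step recorded just before the statement. It is legitimate because $\mathcal E$ is finite and continuous on $X$: the $p$-growth bound gives continuity of the integral functional, and this serves as the constraint qualification. The main subtle point of the whole proof is this existence of $P_z$, and I take it as given from the preceding discussion.

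\emph{Differentiable case.} If $\mathcal E$ is Fr\'echet differentiable at $u_z$, then $\partial\mathcal E(u_z)=\{D\mathcal E(u_z)\}$, since the subdifferential of a convex function at a point of Fr\'echet differentiability is the singleton gradient. So $P_z=D\mathcal E(u_z)$.

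If also $\mathcal G$ is Fr\'echet differentiable at $z$, consider the convex function $t\mapsto\mathcal G(f+t\delta f,g)$. By \eqref{eq:load-support} it lies above the line with slope $\langle\delta f,u_z\rangle$ and agrees with it at $t=0$. A differentiable function touched from below by a line at a point has that line's slope as its derivative there. Hence $D_f\mathcal G(z)[\delta f]=\langle\delta f,u_z\rangle$.

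The same argument, using \eqref{eq:dirichlet-support} with the line touching from above, gives $D_g\mathcal G(z)[\delta g]=R_z[\delta g]$. Since $\mathcal G$ is Fr\'echet differentiable, $D\mathcal G(z)[\delta z]=D_f\mathcal G(z)[\delta f]+D_g\mathcal G(z)[\delta g]$, which yields \eqref{eq:DG-formula}. The identification \eqref{eq:sensitivity-components} then follows from the canonical embedding $X\hookrightarrow X^{**}$, which is an isomorphism by reflexivity.
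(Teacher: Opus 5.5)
Your proposal is correct and follows the paper's proof essentially step for step. You use the same competitor argument with $u_z$ for the load inequality, and the same subgradient inequality for $P_z$ together with the fact that $f-P_z$ annihilates $V$ for the Dirichlet inequality; you apply it to the minimizer at $\tilde g$ while the paper takes the supremum over all $\tilde u\in X_{\tilde g}$, which makes no difference. Your differentiable case is in fact more explicit than the paper's, which only asserts that the supporting inequalities identify the derivative and never says why $P_z=D\mathcal E(u_z)$. Note also that the existence of $P_z$ is a hypothesis of the statement, so nothing needs to be taken on faith there.
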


\begin{proof}
For the load variable, using $u_z$ as an admissible comparison field for the datum $(\tilde f,g)$ gives
\begin{equation}
    \mathcal G(\tilde f,g)
    \geq
    \langle \tilde f,u_z\rangle_{X^*,X}-\mathcal E(u_z)
    =\mathcal G(f,g)+\langle \tilde f-f,u_z\rangle_{X^*,X},
\end{equation}
which proves \eqref{eq:load-support}.

For the Dirichlet variable, let $\tilde u\in X_{\tilde g}$. Convexity of $\mathcal E$ and $P_z\in\partial\mathcal E(u_z)$ give
\begin{equation}
    \langle f,\tilde u\rangle_{X^*,X}-\mathcal E(\tilde u)
    \leq
    \langle f,u_z\rangle_{X^*,X}-\mathcal E(u_z)
    +\langle f-P_z,\tilde u-u_z\rangle_{X^*,X}.
\end{equation}
Because $\tilde u-u_z$ has Dirichlet trace $\tilde g-g$ and $f-P_z$ annihilates $V$, the last term equals $R_z[\tilde g-g]$. Taking the supremum over $\tilde u\in X_{\tilde g}$ proves \eqref{eq:dirichlet-support}. 

If $\mathcal G$ is Fr\'echet differentiable at $z$, the supporting inequalities identify the derivative with the displayed linear functional, giving \eqref{eq:DG-formula}.
\end{proof}

Thus, the data derivative of the value functional, whenever it is single valued, has the expected mechanical interpretation: variations of the applied load are paired with the equilibrium displacement, and variations of the Dirichlet datum are paired with the boundary reaction. In the nonsmooth case the same statement persists at the level of supporting subgradients and supergradients.

\section{Saddle minmax architecture}\label{sec:saddle-minmax}

Proposition~\ref{prop:value-convex-concave} and Proposition~\ref{prop:direct-sensitivity} show that the full mixed-boundary value functional has a saddle geometry: it is convex with respect to the generalized load, concave with respect to the prescribed Dirichlet displacement, and its supporting slopes are mechanical displacement--reaction pairs. This section records the corresponding minmax architecture, with the Dirichlet constraint enforced by a dual reaction variable.

\subsection{Exact saddle representation}

Recall the Dirichlet trace map $T_D:X\to G_D$ and its adjoint $T_D^*:G_D^*\to X^*$ from \eqref{eq:dirichlet-trace-map}--\eqref{eq:dirichlet-trace-adjoint}. For $z=(f,g)\in Z$ and $w:=(u,R)\in Y$, define the mixed Lagrangian
\begin{equation}\label{eq:mixed-saddle-lagrangian}
    \mathcal L(w;z)
    :=
    \langle f,u\rangle_{X^*,X}+R[g-T_Du]-\mathcal E(u),
    \quad u\in X,
    \quad R\in G_D^* .
\end{equation}
The sign convention is chosen so that $R$ agrees with the reaction in \eqref{eq:Dirichlet-reaction}.

The mixed Lagrangian incorporates the Dirichlet constraint by duality rather than by restricting the displacement variable a priori. The following result shows that this unconstrained saddle formulation is exactly equivalent to the minimum-potential-energy value problem, and that the equilibrium displacement together with its reaction is a saddle point of the Lagrangian.

\begin{proposition}[Exact saddle representation]\label{prop:exact-saddle-representation}
Assume \textnormal{(A1)--(A7)}. Then, for every $z=(f,g)\in Z$,
\begin{equation}\label{eq:exact-saddle-representation}
    \mathcal G(z)
    =
    \sup_{u\in X}\inf_{R\in G_D^*}\mathcal L((u,R);z).
\end{equation}
Moreover, let $u_z$ be the equilibrium displacement and let $R_z$ be any reaction obtained from an admissible internal force $P_z\in\partial\mathcal E(u_z)$ through \eqref{eq:Dirichlet-reaction}. Then, $w_z=(u_z,R_z)$ is a saddle point:
\begin{equation}\label{eq:exact-saddle-inequality}
    \mathcal L(u,R_z;z)
    \leq
    \mathcal L(w_z;z)
    \leq
    \mathcal L(u_z,R;z)
\end{equation}
for every $u\in X$ and every $R\in G_D^*$, and
\begin{equation}
    \mathcal L(w_z;z)=\mathcal G(z).
\end{equation}
\end{proposition}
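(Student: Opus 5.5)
The plan is to prove the saddle-point inequality \eqref{eq:exact-saddle-inequality} first and to read off the value identities from it.

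\emph{Inner infimum.} Fix $u\in X$. The map $R\mapsto\mathcal L((u,R);z)$ is affine in $R$, with slope $g-T_Du\in G_D$. If $T_Du=g$, the infimum over $R\in G_D^*$ equals $\langle f,u\rangle_{X^*,X}-\mathcal E(u)$. If $T_Du\neq g$, Hahn--Banach gives some $R_0\in G_D^*$ with $R_0[g-T_Du]\neq 0$. Scaling $R=tR_0$ and letting $t\to\pm\infty$ then sends the infimum to $-\infty$. Hence
\begin{equation}
    \sup_{u\in X}\inf_{R\in G_D^*}\mathcal L((u,R);z)
    =\sup_{u\in X_g}\bigl\{\langle f,u\rangle_{X^*,X}-\mathcal E(u)\bigr\}.
\end{equation}
Writing $u=\Lift g+v$ with $v\in V$ identifies this supremum with \eqref{eq:value}, which proves \eqref{eq:exact-saddle-representation}.

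\emph{Right saddle inequality and value.} Since $u_z\in X_g$, the term $R[g-T_Du_z]$ vanishes for every $R$. Therefore $\mathcal L(u_z,R;z)=\langle f,u_z\rangle_{X^*,X}-\mathcal E(u_z)=\mathcal G(z)$, independently of $R$. This gives the right inequality (as an equality) and also gives $\mathcal L(w_z;z)=\mathcal G(z)$.

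\emph{Left saddle inequality.} Fix an arbitrary $u\in X$; it need not satisfy the constraint, and this is the only real point of the proof. The key identity is
\begin{equation}
    R_z[\eta]=\langle f-P_z,\xi\rangle_{X^*,X}
    \quad\text{for every }\xi\in X\text{ with }T_D\xi=\eta .
\end{equation}
This holds because \eqref{eq:Dirichlet-reaction} is independent of the lifting: two such $\xi$ differ by an element of $V$, which $f-P_z$ annihilates by \eqref{eq:nonsmooth-equilibrium}. Apply it with $\xi=u_z-u$, whose trace is $T_D(u_z-u)=g-T_Du$. Then $R_z[g-T_Du]=\langle f-P_z,u_z-u\rangle_{X^*,X}$, and hence
\begin{equation}
    \mathcal L(u,R_z;z)
    =\langle f,u_z\rangle_{X^*,X}-\langle P_z,u_z-u\rangle_{X^*,X}-\mathcal E(u).
\end{equation}
The subgradient inequality $\mathcal E(u)\ge\mathcal E(u_z)+\langle P_z,u-u_z\rangle_{X^*,X}$ now yields
\begin{equation}
    \mathcal L(u,R_z;z)\le\langle f,u_z\rangle_{X^*,X}-\mathcal E(u_z)=\mathcal L(w_z;z).
\end{equation}

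The main obstacle is this trace identity: one must make sure $R_z$ acts consistently on traces of arbitrary $u\in X$, not only on those of lifted fields. Independence of the lifting, together with surjectivity of $T_D$ from (A4), settles it.
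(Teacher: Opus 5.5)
Your proposal is correct and follows the paper's proof essentially step for step. The inner infimum forces $T_Du=g$, the right inequality holds as an equality because $T_Du_z=g$, and the left inequality comes from the subgradient inequality for $P_z$ combined with $f=P_z+T_D^*R_z$, which is exactly your lifting-independent trace identity. You also spell out the Hahn--Banach step for $T_Du\neq g$ and the justification of $f-P_z=T_D^*R_z$, both of which the paper leaves implicit.
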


\begin{proof}
For fixed $u$, the map $R\mapsto R[g-T_Du]$ is linear. If $T_Du\ne g$, its infimum over $G_D^*$ is $-\infty$; if $T_Du=g$, the multiplier term vanishes. Hence, the extended-real inner infimum is finite exactly on $X_g$, and
\begin{equation}
    \sup_{u\in X}\inf_{R\in G_D^*}\mathcal L((u,R);z)
    =
    \sup_{u\in X_g}\{\langle f,u\rangle_{X^*,X}-\mathcal E(u)\}
    =\mathcal G(z).
\end{equation}
Since $T_Du_z=g$, the right inequality in \eqref{eq:exact-saddle-inequality} is an equality for every $R$. For the left inequality, convexity gives
\begin{equation}
    \mathcal E(u)\geq \mathcal E(u_z)+\langle P_z,u-u_z\rangle_{X^*,X}.
\end{equation}
Using $f=P_z+T_D^*R_z$ and $T_Du_z=g$ yields
\begin{equation}
    \mathcal L(u,R_z;z)
    \leq
    \langle f,u_z\rangle_{X^*,X}-\mathcal E(u_z)
    =\mathcal L(w_z;z),
\end{equation}
which proves the saddle inequality.
\end{proof}

Thus, the Dirichlet datum can be enforced entirely through the reaction variable. This exact saddle representation is the infinite-dimensional prototype for the finite minmax games introduced below: displacement atoms will approximate the maximizing side, while reaction atoms will approximate the minimizing side associated with the boundary constraint.

\begin{remark}[Full-Neumann and full-Dirichlet endpoint cases]
{\rm The mixed saddle architecture contains both endpoint boundary conditions as special cases. In the full-Neumann case, $\Gamma_N=\partial\Omega$ and $\Gamma_D=\emptyset$, there is no Dirichlet datum and no reaction multiplier. After passing to the quotient space $X/\Rrig$ by rigid motions, the exact representation reduces to the load-only convex envelope
\begin{equation}
    \mathcal G(f)
    =
    \sup_{u\in X/\Rrig}
    \{\langle f,u\rangle_{X^*,X}-\mathcal E(u)\},
\end{equation}
with $f\in (X/\Rrig)^*$ satisfying the usual balance condition. In the full-Dirichlet case, $\Gamma_D=\partial\Omega$ and $\Gamma_N=\emptyset$, the trace $T_D=T$ is the full boundary trace and the reaction multiplier acts on the entire boundary.} \hfill$\square$
\end{remark}

\subsection{Manufactured saddle data}\label{sec:manufactured-saddle-sensitivities}

The ambient generalized-load formulation makes it possible to manufacture displacement--reaction support pairs without solving a boundary-value problem for every training sample. The nonsmooth form of the construction uses a displacement atom $u$, an internal-force atom $P\in\partial\mathcal E(u)$, and a reaction atom $R\in G_D^*$.

\begin{proposition}[Manufactured data with prescribed reaction]\label{prop:manufactured-prescribed-reaction}
Assume \textnormal{(A1)--(A7)}. Let $u\in X$, $P\in\partial\mathcal E(u)$, and $R\in G_D^*$. Define
\begin{equation}\label{eq:manufactured-g-f-R}
    g_w:=T_Du,
    \quad
    f_w:=P+T_D^*R\in X^* .
\end{equation}
Then, $u$ is the unique equilibrium displacement for the ambient datum $z_w:=(f_w,g_w)\in Z$. The corresponding reaction contains $R$, and the exact value label is
\begin{equation}\label{eq:manufactured-label-with-reaction}
    \mathcal G(z_w)
    =\langle f_w,u\rangle_{X^*,X}-\mathcal E(u)
    =\langle P,u\rangle_{X^*,X}+R[T_Du]-\mathcal E(u).
\end{equation}
Moreover,
\begin{equation}\label{eq:manufactured-support-with-reaction}
    u\in\partial_f\mathcal G(z_w),
    \quad
    R\in\partial_g\mathcal G(z_w).
\end{equation}
At a datum where $\mathcal G$ is differentiable, this support pair gives the classical derivative formula \eqref{eq:DG-formula}.
\end{proposition}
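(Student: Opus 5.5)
The plan is to check that $u$ is the constrained minimizer for $z_w$ by a first-order convexity argument. After that, the remaining claims follow from results already stated, chiefly Corollary~\ref{cor:uniqueness} and Proposition~\ref{prop:direct-sensitivity}.

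\emph{Admissibility and minimality.} By definition $g_w=T_Du$, so $u\in X_{g_w}$. Take any competitor $\tilde u\in X_{g_w}$. Then $\tilde u-u\in V$, so $T_D(\tilde u-u)=0$. Since $P\in\partial\mathcal E(u)$, the subgradient inequality gives $\mathcal E(\tilde u)\ge\mathcal E(u)+\langle P,\tilde u-u\rangle_{X^*,X}$. The adjoint identity \eqref{eq:dirichlet-trace-adjoint} gives $\langle T_D^*R,\tilde u-u\rangle_{X^*,X}=R[T_D(\tilde u-u)]=0$, hence $\langle f_w,\tilde u-u\rangle_{X^*,X}=\langle P,\tilde u-u\rangle_{X^*,X}$. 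Combining the two,
\begin{equation*}
\mathcal E(\tilde u)-\langle f_w,\tilde u\rangle_{X^*,X}\ge \mathcal E(u)-\langle f_w,u\rangle_{X^*,X},
\end{equation*}
so $u$ minimizes $\mathcal F(\cdot;z_w)$ over $X_{g_w}$. Corollary~\ref{cor:uniqueness} then says $u$ is the unique equilibrium displacement, $u=u_{z_w}$.

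\emph{Value label.} By definition \eqref{eq:value}, $\mathcal G(z_w)=\langle f_w,u\rangle_{X^*,X}-\mathcal E(u)$. Expanding $f_w=P+T_D^*R$ and using the adjoint once more, $\langle T_D^*R,u\rangle_{X^*,X}=R[T_Du]$. This gives the second form of \eqref{eq:manufactured-label-with-reaction}.

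\emph{Reaction and supporting slopes.} We have $P\in\partial\mathcal E(u_{z_w})$, and $\langle P-f_w,\eta\rangle_{X^*,X}=-R[T_D\eta]=0$ for every $\eta\in V$. So $P$ is an admissible internal force in the sense of \eqref{eq:nonsmooth-equilibrium}. The associated reaction \eqref{eq:Dirichlet-reaction} is $R_{z_w}[\eta]=\langle f_w-P,\Lift\eta\rangle_{X^*,X}=R[T_D\Lift\eta]=R[\eta]$, using assumption (A4). Hence $R$ belongs to the set of reactions at $z_w$; that set may contain others, because $P$ need not be unique. Proposition~\ref{prop:direct-sensitivity} applied at $z_w$ with this choice of $P$ then gives \eqref{eq:manufactured-support-with-reaction}. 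Where $\mathcal G$ is differentiable, the second part of that proposition yields \eqref{eq:DG-formula} with $(u,R)$.

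The argument is essentially bookkeeping. The one point needing care is identifying $R$ as the reaction: this depends on the lifting being a right inverse of the trace, and on the definition of $R_z$ being independent of the lifting, which the paper notes after \eqref{eq:Dirichlet-reaction}.
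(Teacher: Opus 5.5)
Your proof is correct and follows the paper's argument essentially step for step: admissibility from $g_w=T_Du$; the observation that $T_D^*R$ annihilates $V$, so that $P$ satisfies the optimality condition (you make the sufficiency explicit through the subgradient inequality, which the paper leaves implicit); uniqueness from Corollary~\ref{cor:uniqueness}; the reaction identity $\langle f_w-P,\Lift\eta\rangle_{X^*,X}=R[T_D\Lift\eta]=R[\eta]$; and the supporting slopes from Proposition~\ref{prop:direct-sensitivity}. One small point: the second part of Proposition~\ref{prop:direct-sensitivity} also assumes Fr\'echet differentiability of $\mathcal E$, which (A1)--(A7) do not provide, but you do not need it, since differentiability of $\mathcal G$ at $z_w$ makes the load subgradient and Dirichlet supergradient unique, so your supporting inclusions already identify $D\mathcal G(z_w)$ with $(u,R)$.
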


\begin{proof}
Since $g_w=T_Du$, the field $u$ is admissible. For every $\varphi\in V$,
\begin{equation}
    \langle f_w,\varphi\rangle_{X^*,X}
    =\langle P,\varphi\rangle_{X^*,X}+R[T_D\varphi]
    =\langle P,\varphi\rangle_{X^*,X}.
\end{equation}
Thus, $P\in\partial\mathcal E(u)$ satisfies the affine-space optimality condition for the datum $z_w$, and $u$ is the unique minimizer. The reaction computed from \eqref{eq:Dirichlet-reaction} is, for every $\eta\in G_D$,
\begin{equation}
    \langle f_w-P,\Lift\eta\rangle_{X^*,X}=R[T_D\Lift\eta]=R[\eta].
\end{equation}
The value identity follows by evaluating the potential at the minimizer, and the supporting-slope statement follows from Proposition~\ref{prop:direct-sensitivity}.
\end{proof}

If the stored energy is differentiable at $u$, the canonical choice is $P=D\mathcal E(u)$, recovering the familiar manufactured load $f_w=D\mathcal E(u)+T_D^*R$. Choosing $R=0$ gives the usual manufactured-load sample. Nonzero reactions are physical external-load samples only when $f_w\in L_{\rm ext}$; otherwise they are augmented samples designed to expose the concave Dirichlet side of the value map.

\subsection{Finite saddle minmax games}\label{sec:finite-saddle-games}

Having obtained an exact saddle representation, we now restrict the two players to finite collections of atoms.

\subsubsection{From data-supported approximation to the finite saddle minmax game}

We begin by introducing the finite saddle architecture as a support-based approximation of the value functional on data space. Let
\begin{equation}
    z_\alpha=(f_\alpha,g_\alpha)\in Z,
    \quad 
    \alpha=1,\ldots,N,
\end{equation}
be sampled data points. By Proposition~\ref{prop:direct-sensitivity}, every datum $z_\alpha$
possesses mechanical supporting slopes
\begin{equation}
    u_\alpha\in \partial_f \mathcal{G}(z_\alpha),
    \quad
    R_\alpha\in \partial_g \mathcal{G}(z_\alpha),
\end{equation}
where $u_\alpha$ is the equilibrium displacement and $R_\alpha$ is the
corresponding Dirichlet reaction. Hence, the data-generated affine saddle function associated with $z_\alpha$ is
\begin{equation}
    \ell_\alpha(z)
    :=
    \mathcal{G}(z_\alpha)
    +
    \langle f-f_\alpha,u_\alpha\rangle_{X^*,X}
    +
    R_\alpha[g-g_\alpha],
    \quad z=(f,g)\in Z .
\end{equation}
This expression is affine in the load variable and affine in the Dirichlet datum. Its load slope and Dirichlet slope are precisely the mechanical pair $w_\alpha=(u_\alpha,R_\alpha)$. 

A finite architecture for $\mathcal{G}$ may therefore be built from such supporting data. Since $\mathcal{G}$ is convex in the load variable, the support functions are combined by a maximum on the load side. Since $\mathcal{G}$ is concave in the Dirichlet datum, they are combined by a minimum on the Dirichlet side. This leads naturally to a finite saddle envelope.

It remains to express the supports in a form that does not require the datum $z_\alpha$, the value $\mathcal{G}(z_\alpha)$, and the intercept to be stored independently. Proposition~\ref{prop:manufactured-prescribed-reaction} provides this reduction. Suppose that a displacement atom $u_i\in X$, an internal-force atom $P_i\in \partial\mathcal E(u_i)$, and a reaction atom $R_j\in G_D^*$ are prescribed. They induce the ambient datum
\begin{equation}
    g_i:=T_Du_i,
    \quad
    f_{ij}:=P_i+T_D^*R_j,
    \quad
    z_{ij}:=(f_{ij},g_i).
\end{equation}
For this datum, $u_i$ is the exact equilibrium displacement and $R_j$ is the corresponding Dirichlet reaction. Moreover,
\begin{equation}
    \mathcal{G}(z_{ij})
    =
    \langle f_{ij},u_i\rangle_{X^*,X}
    -
    \mathcal E(u_i).
\end{equation}
Substituting these identities into the supporting affine form gives
\begin{equation}
\begin{aligned}
    \ell_{ij}(z)
    & =
    \mathcal{G}(z_{ij})
    +
    \langle f-f_{ij},u_i\rangle_{X^*,X}
    +
    R_j[g-g_i] 
    \\ & =
    \langle f,u_i\rangle_{X^*,X}
    +
    R_j[g-T_Du_i]
    -
    \mathcal E(u_i).
\end{aligned}
\end{equation}
Thus, the elementary payoff associated with the pair of atoms $(u_i,R_j)$ is
\begin{equation}
    \Phi_{ij}(z)
    :=
    \langle f,u_i\rangle_{X^*,X}
    +
    R_j[g-T_Du_i]
    -
    \mathcal E(u_i) ,
\end{equation}
the coupling coefficient between the displacement atom $u_i$ and the reaction atom $R_j$ is $C_{ij}:=-R_j[T_Du_i]$, and the load-side intercept is $-\mathcal E(u_i)$. These quantities are determined by the mechanical energy and the Dirichlet trace pairing, rather than being independent trainable parameters.

Let $u_1,\ldots,u_I\in X$ be displacement atoms and let $R_1,\ldots,R_J\in G_D^*$ be reaction atoms. Denote by
\begin{subequations}
\begin{align}
    &
    \Delta_I
    :=
    \left\{
        p\in \mathbb R^I \,:\,
        p_i\ge 0, \;\, \sum_{i=1}^I p_i=1
    \right\},
    \\ & 
    \Delta_J
    :=
    \left\{
        q\in \mathbb R^J \,:\,
        q_j\ge 0, \;\, \sum_{j=1}^J q_j=1
    \right\},
\end{align}
\end{subequations}
the corresponding probability simplices. The finite saddle minmax approximation is then defined as the value of the finite zero-sum game
\begin{equation} \label{eq:saddle-minmax-finite}
    \mathcal{G}_{IJ}(z)
    :=
    \max_{p\in\Delta_I}
    \min_{q\in\Delta_J}
    \sum_{i=1}^I
    \sum_{j=1}^J
    p_iq_j\Phi_{ij}(z).
\end{equation}
The game is set forth by a finite family of data-generated affine saddle functions, with Propositions~\ref{prop:direct-sensitivity} and~\ref{prop:manufactured-prescribed-reaction} identifying those supports with mechanical displacement--reaction atoms. The architecture may be viewed either as a support-envelope approximation built from sampled data $z_{ij}$, or equivalently as a saddle minmax game built directly from the atom dictionaries $\{u_i\}_{i=1}^I$ and $\{R_j\}_{j=1}^J$.

\subsubsection{Properties and mechanical readout}

The finite game just defined inherits its structure from the exact saddle representation: the displacement atoms act on the convex load side, while the reaction atoms act on the concave Dirichlet side. We first record that this discretization preserves the fundamental convex--concave geometry of the full value functional.

\begin{proposition}[Shape preservation of the saddle minmax]\label{prop:saddle-minmax-shape}
For every finite collection $(u_i)_{i=1}^I\subset X$ and $(R_j)_{j=1}^J\subset G_D^*$, the function $\mathcal G_{IJ}:Z\to\mathbb R$ defined by \eqref{eq:saddle-minmax-finite} is finite, continuous, convex in $f$, and concave in $g$. The minimax identity
\begin{equation}\label{eq:finitely-dimensional-minimax}
    \max_{p\in\Delta_I}\min_{q\in\Delta_J}
    \sum_{i,j}p_iq_j\Phi_{ij}(z)
    =
    \min_{q\in\Delta_J}\max_{p\in\Delta_I}
    \sum_{i,j}p_iq_j\Phi_{ij}(z)
\end{equation}
holds for every $z\in Z$.
\end{proposition}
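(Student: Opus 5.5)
The argument will treat \eqref{eq:saddle-minmax-finite} as a finite bimatrix game with payoff matrix $A(z)=(\Phi_{ij}(z))_{ij}$ and read off all claims from this representation. First, we record that each $\Phi_{ij}$ is finite and continuous on $Z$. Indeed, $\mathcal E(u_i)$ is finite by the growth bound (A7), $f\mapsto\langle f,u_i\rangle_{X^*,X}$ is continuous linear on $X^*$, and $g\mapsto R_j[g]$ is continuous linear on $G_D$. Hence $\Phi_{ij}$ is continuous and affine, and separately affine in $f$ and in $g$. The bilinear form $(p,q)\mapsto p^TA(z)q$ is then continuous on the compact set $\Delta_I\times\Delta_J$. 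Consequently the inner minimum and the outer maximum are attained and finite, and $\mathcal G_{IJ}(z)$ is a real number.

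The minimax identity \eqref{eq:finitely-dimensional-minimax} is von Neumann's minimax theorem for the matrix game $A(z)$. The simplices are compact and convex, and $p^TA(z)q$ is linear in each of $p$ and $q$ separately, so the theorem applies directly; Sion's theorem would also do. I will invoke it as a standard result rather than reprove it.

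For the shape properties, we use the two orderings of \eqref{eq:finitely-dimensional-minimax}. Fix $g$. For each $(p,q)$, the map $f\mapsto\sum_{i,j}p_iq_j\Phi_{ij}(f,g)$ is affine in $f$. In the min--max form, $\max_p$ of affine functions of $f$ is convex, but then $\min_q$ of convex functions need not be convex, so we must be more careful. Since $\sum_j q_j=1$, the payoff splits as
\[
\sum_{i,j}p_iq_j\Phi_{ij}(z)=\sum_i p_i\big(\langle f,u_i\rangle_{X^*,X}-\mathcal E(u_i)\big)+\sum_{i,j}p_iq_j R_j[g-T_Du_i].
\]
The key observation is that the $q$-dependent part involves only $g$. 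Using the max--min form,
\[
\mathcal G_{IJ}(z)=\max_{p}\Big[\sum_i p_i\big(\langle f,u_i\rangle_{X^*,X}-\mathcal E(u_i)\big)+h(p,g)\Big],\qquad h(p,g):=\min_q\sum_{i,j}p_iq_jR_j[g-T_Du_i].
\]
For fixed $g$, this is a pointwise maximum over $p$ of functions affine in $f$, hence convex in $f$. Symmetrically, the $p$-dependent coupling with $g$ can be rewritten using the min--max form:
\[
\mathcal G_{IJ}(z)=\min_q\Big[\sum_j q_jR_j[g]+k(q,f)\Big],\qquad k(q,f):=\max_p\sum_i p_i\Big(\langle f,u_i\rangle_{X^*,X}-\mathcal E(u_i)-\sum_j q_jR_j[T_Du_i]\Big).
\]
For fixed $f$, this is a pointwise minimum over $q$ of functions affine in $g$, hence concave in $g$. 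This is the step that needs care: the convexity and concavity come from the two different orderings, and the minimax identity is exactly what lets us use both.

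Continuity of $\mathcal G_{IJ}$ follows from the Lipschitz dependence of the game value on the payoff matrix. Specifically, $|\mathrm{val}(A)-\mathrm{val}(B)|\le\max_{ij}|A_{ij}-B_{ij}|$, and each entry $\Phi_{ij}(z)$ is continuous in $z$ by the first step. The main obstacle is only the bookkeeping in the previous paragraph; everything else is routine.
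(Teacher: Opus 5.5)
Your proof is correct and follows the same route as the paper. The minimax identity comes from von Neumann/Sion, convexity in $f$ comes from the max--min ordering, and concavity in $g$ comes from the min--max ordering. Your explicit splitting of the payoff (the $q$-dependent part involves only $g$, and via $\sum_i p_i=1$ the $g$-term depends only on $q$) and your Lipschitz bound $|\mathrm{val}(A)-\mathrm{val}(B)|\le\max_{ij}|A_{ij}-B_{ij}|$ for continuity make precise what the paper's short proof and its subsequent separability remark state in words.
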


\begin{proof}
The payoff is continuous and affine in the datum for every fixed $(p,q)$, and the strategy sets are compact finite-dimensional simplices. The minimax identity is the finite-dimensional form of minimax duality \cite{vonNeumannMorgenstern1944,Sion1958}. For fixed $g$, the value is a maximum of affine functions of $f$; for fixed $f$, using the reversed minimax form, it is a minimum of affine functions of $g$. Hence, the finite game preserves the convex--concave structure, as claimed.
\end{proof}

It bears emphasis that the shape-preservation property hinges on the special mechanical payoff structure, not merely on the fact that the game entries are affine. In this payoff, the load slope is determined only by the displacement mixture chosen by the maximizing player, while the Dirichlet slope is determined only by the reaction mixture chosen by the minimizing player. This separability is what makes the finite value convex in the load variable and concave in the Dirichlet datum.

It follows that the finite saddle minmax is not merely a generic finite-dimensional approximation: it preserves the defining shape constraints of the mechanical value map. This property renders interpretation of optimal mixed strategies as mechanical readouts meaningful, as shown next.

\begin{proposition}[Mechanical readout of the saddle minmax]\label{prop:saddle-minmax-sense}
Let $(p,q)$ be a saddle strategy at $z$. Then,
\begin{equation}\label{eq:saddle-minmax-active-sensitivities}
    \bar u_p:=\sum_{i=1}^I p_i u_i,
    \quad
    \bar R_q:=\sum_{j=1}^J q_j R_j
\end{equation}
are, respectively, a load subgradient and a Dirichlet supergradient of $\mathcal G_{IJ}$ at $z$, i.e.,
\begin{equation}
    \bar u_p\in\partial_f\mathcal G_{IJ}(z),
    \quad
    \bar R_q\in\partial_g\mathcal G_{IJ}(z).
\end{equation}
At every point where the finite saddle value is differentiable with respect to the data,
\begin{equation}\label{eq:saddle-minmax-gradient}
    D\mathcal G_{IJ}(z)[\delta z]
    =\langle\delta f,\bar u_p\rangle_{X^*,X}+\bar R_q[\delta g].
\end{equation}
\end{proposition}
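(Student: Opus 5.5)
The plan is to expand the bilinear payoff and then use the two inequalities that define a saddle strategy $(p,q)$ of the finite game at $z$. By the definition of $\Phi_{ij}$, for any mixed strategies $(p',q')$,
\begin{equation*}
    \Psi(p',q';z):=\sum_{i,j}p'_iq'_j\Phi_{ij}(z)
    =\langle f,\bar u_{p'}\rangle_{X^*,X}+\bar R_{q'}[g]
    -\sum_{i,j}p'_iq'_jR_j[T_Du_i]-\sum_i p'_i\mathcal E(u_i).
\end{equation*}
The key structural point is that $f$ enters only through $\bar u_{p'}$ and $g$ only through $\bar R_{q'}$. The remaining terms do not depend on $z$.

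By Proposition~\ref{prop:saddle-minmax-shape}, a saddle strategy $(p,q)$ at $z$ satisfies $\Psi(p',q;z)\le\mathcal G_{IJ}(z)=\Psi(p,q;z)\le\Psi(p,q';z)$ for all $p'\in\Delta_I$ and $q'\in\Delta_J$.

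For the load subgradient, fix $\tilde z=(\tilde f,g)$. Since $\mathcal G_{IJ}(\tilde z)=\max_{p'}\min_{q'}\Psi(p',q';\tilde z)\ge\min_{q'}\Psi(p,q';\tilde z)$, we have
\begin{equation*}
    \Psi(p,q';\tilde z)=\Psi(p,q';z)+\langle\tilde f-f,\bar u_p\rangle_{X^*,X}
    \ge\mathcal G_{IJ}(z)+\langle\tilde f-f,\bar u_p\rangle_{X^*,X}
\end{equation*}
for every $q'$, by the right saddle inequality. Taking the minimum over $q'$ gives $\bar u_p\in\partial_f\mathcal G_{IJ}(z)$.

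The Dirichlet side is symmetric and uses the reversed form of the minimax identity \eqref{eq:finitely-dimensional-minimax}. Fix $\tilde z=(f,\tilde g)$. Then $\mathcal G_{IJ}(\tilde z)=\min_{q'}\max_{p'}\Psi\le\max_{p'}\Psi(p',q;\tilde z)$, and
\begin{equation*}
    \Psi(p',q;\tilde z)=\Psi(p',q;z)+\bar R_q[\tilde g-g]\le\mathcal G_{IJ}(z)+\bar R_q[\tilde g-g]
\end{equation*}
by the left saddle inequality. Hence $\bar R_q\in\partial_g\mathcal G_{IJ}(z)$.

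For the derivative formula \eqref{eq:saddle-minmax-gradient}, suppose $\mathcal G_{IJ}$ is Fr\'echet differentiable at $z$. Then $f\mapsto\mathcal G_{IJ}(f,g)$ is a convex function lying above an affine minorant that touches it at $f$. A standard argument applies: compare $t\mapsto\mathcal G_{IJ}(f+t\delta f,g)$ with the supporting line and let $t\to0^\pm$. This shows that the partial derivative in $f$ equals $\langle\cdot,\bar u_p\rangle$, and the analogous argument in $g$ gives $\bar R_q$. Linearity of the Fr\'echet derivative then yields the formula, exactly as in the last step of Proposition~\ref{prop:direct-sensitivity}.

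The main point needing care is the one flagged after Proposition~\ref{prop:saddle-minmax-shape}. The increments in $f$ must be independent of $q'$ and the increments in $g$ independent of $p'$. This separability is what makes the one-sided saddle inequalities transfer uniformly, and it follows from the mechanical form of $\Phi_{ij}$.
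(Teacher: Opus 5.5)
Your proposal is correct and follows essentially the same route as the paper: fix the optimal $p$ to get the load subgradient inequality, fix the optimal $q$ in the reversed minimax form to get the Dirichlet supergradient inequality, and use uniqueness of supporting functionals at differentiability points. Your explicit expansion of the payoff, which shows that $f$ enters only through $\bar u_{p'}$ and $g$ only through $\bar R_{q'}$, supplies the separability step that the paper's brief proof leaves implicit.
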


\begin{proof}
The proof is the same supporting-plane argument as in Proposition~\ref{prop:direct-sensitivity}, now applied to the finite matrix game. Holding the optimal load-side strategy $p$ fixed gives the load subgradient inequality. Holding the optimal reaction-side strategy $q$ fixed in the reversed minimax formulation gives the Dirichlet supergradient inequality. At differentiability points the supporting functional is unique.
\end{proof}

Proposition~\ref{prop:saddle-minmax-sense} shows that the finite game retains not only the shape of the exact value functional, but also its mechanical sensitivity interpretation. The next question is, therefore, how to choose atoms so that these finite mechanical readouts approximate the supporting displacement--reaction pairs of the exact value map on a prescribed data class.

\subsubsection{Uniform convergence from displacement density and automatic reaction norming}
\label{subsec:atom-selection}

We first enunciate the notation conventions and standing assumptions implied in the convergence statement and elsewhere henceforth. Let $K\subset Z$ be compact and define
\begin{equation} \label{nuGYUQ}
    U_K:=\{u_z:z\in K\}\subset X .
\end{equation}
For every $N$, we let
\begin{equation}
    U_N=\{u_1^N,\ldots,u_{I_N}^N\}\subset X,
    \quad
    A_N:=\operatorname{co}U_N ,
\end{equation}
and assume that the convex hulls $A_N$ are uniformly bounded in $X$, i.e., there is $M<\infty$ such that
\begin{equation} \label{ClTt0f}
    \sup_N\sup_{u\in A_N}\|u\|_X\le M,
    \quad
    \sup_{z\in K}\|u_z\|_X\le M .
\end{equation}
We note that the second uniform bound follows automatically from the assumptions of Theorem~\ref{thm:uniform-convergence}, since, by Lemma 1, the mapping $z \mapsto u_z$ is strongly continuous, whence, for compact $K$ the set $U_K$ is compact in $X$, hence bounded. Let $L_{K,M}$ be a Lipschitz constant, uniform in $z=(f,g)\in K$, for the mapping $u\mapsto \langle f,u\rangle_{X^*,X}-\mathcal E(u)$ on the bounded set
\begin{equation}
\begin{split}
    &
    \{u\in X \,:\, \|u\|_X\le M\}
    \, \cup \\ &
    \{u+\Lift(g-T_Du) \,:\, z=(f,g)\in K,\ \|u\|_X\le M\} .
\end{split}
\end{equation}
Such constant exists by (A8), the compactness of $K$, the boundedness of $T_D
D$ and $\mathcal E$, and the uniform bound (\ref{ClTt0f}). Let $\lambda>0$ be such that
\begin{equation} \label{10Osu4}
    \lambda\ge L_{K,M}\|\Lift\|_{\mathcal L(G_D,X)} .
\end{equation}
Finally, we define the \emph{residual set} as
\begin{equation}
    S_N
    :=
    \{g-T_Du \,:\, z=(f,g)\in K,\ u\in A_N\}\subset G_D .
\end{equation}

\begin{lemma}[Automatic finite reaction norming]
\label{lem:automatic-finite-reaction-norming}
Let $\varepsilon_N\downarrow 0$. Then, for every $N$ there exists a finite set $B_N=\{B_1^N,\ldots,B_{J_N}^N\}\subset G_D^*$ such that
\begin{subequations} \label{TsTGw9}
\begin{align}
    &   \label{rYiunj}
    \|B_j^N\|_{G_D^*}\le 1,
    \quad 1\le j\le J_N,
    \\ &    \label{ayBtJr}
    \sup_{1\le j\le J_N}B_j^N[\xi]
    \ge
    \|\xi\|_{G_D}-\varepsilon_N,
    \quad
    \xi\in S_N .
\end{align}
\end{subequations}
\end{lemma}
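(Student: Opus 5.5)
The plan is to reduce the statement to a compactness argument combined with the Hahn--Banach theorem. I will first show that, for each fixed $N$, the residual set $S_N$ is compact in $G_D$. I will then pick norming functionals for finitely many points of an $\varepsilon_N/2$-net of $S_N$. The sequence $\varepsilon_N\downarrow0$ plays no role beyond fixing the tolerance at each level $N$, so the argument is carried out separately for each $N$.

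\emph{Step 1: compactness of $S_N$.} The set $U_N=\{u_1^N,\ldots,u_{I_N}^N\}$ is finite. Its convex hull $A_N$ is the image of the simplex $\Delta_{I_N}$ under the continuous linear map $p\mapsto\sum_i p_iu_i^N$. Hence $A_N$ is compact in $X$. The projection $K\ni(f,g)\mapsto g\in G_D$ is continuous, and $T_D:X\to G_D$ is bounded linear. Therefore the map
\begin{equation*}
    K\times A_N\ni\bigl((f,g),u\bigr)\longmapsto g-T_Du\in G_D
\end{equation*}
is continuous on a compact product space. Its image $S_N$ is therefore compact. If $K=\emptyset$, then $S_N=\emptyset$; in that case any single functional, for instance $B_1^N=0$, satisfies \eqref{TsTGw9} vacuously.

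\emph{Step 2: finite net and Hahn--Banach.} By compactness, choose finitely many points $\xi_1,\ldots,\xi_{J_N}\in S_N$ such that every $\xi\in S_N$ satisfies $\|\xi-\xi_k\|_{G_D}\le\varepsilon_N/2$ for some $k$. For each $k$, the Hahn--Banach theorem on the Banach space $G_D$ provides $B_k^N\in G_D^*$ with $\|B_k^N\|_{G_D^*}\le1$ and $B_k^N[\xi_k]=\|\xi_k\|_{G_D}$. When $\xi_k=0$, one may take $B_k^N=0$. This construction gives \eqref{rYiunj}.

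\emph{Step 3: the norming estimate.} Fix $\xi\in S_N$ and choose $k$ with $\|\xi-\xi_k\|\le\varepsilon_N/2$. Then
\begin{equation*}
\begin{split}
    B_k^N[\xi]
    &\ge B_k^N[\xi_k]-\|B_k^N\|\,\|\xi-\xi_k\| \\
    &\ge \|\xi_k\|-\tfrac{\varepsilon_N}{2} \\
    &\ge \|\xi\|-\varepsilon_N,
\end{split}
\end{equation*}
using the reverse triangle inequality in the last step. Taking the supremum over $j$ gives \eqref{ayBtJr}.

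The only step needing care is Step 1. The point is that the residual set is compact, not merely bounded. This relies on each $A_N$ being the hull of a finite set, so that it is finite-dimensional, and on $K$ being compact. Boundedness alone would not yield a finite net in the infinite-dimensional space $G_D$. Once compactness holds, the rest is the standard finite-norming argument.
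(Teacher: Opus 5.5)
Your proposal is correct and follows the paper's proof essentially step for step. Both arguments show $S_N$ is compact as the continuous image of $K\times A_N$, take a finite $\varepsilon_N/2$-net, attach Hahn--Banach norming functionals to the nonzero net points (and the zero functional to zero points), and conclude with the same triangle-inequality estimate; your simplex-image justification of compactness of $A_N$ and your treatment of the case $K=\emptyset$ are harmless refinements.
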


\begin{proof}
Since $U_N$ is finite, $A_N=\operatorname{co}U_N$ is compact in $X$. Since $K$ is compact and the map $(z,u)\mapsto g-T_Du$ is continuous from $K\times A_N$ to $G_D$, the residual set $S_N$ is compact in $G_D$. 

Set $\delta_N:=\varepsilon_N/2$. Choose a finite $\delta_N$-net $\xi_1^N,\ldots,\xi_{J_N}^N\in S_N$ such that
\begin{equation}
    S_N\subset
    \bigcup_{j=1}^{J_N}
    \{\xi\in G_D:\|\xi-\xi_j^N\|_{G_D}\le\delta_N\}.
\end{equation}
If $\xi_j^N=0$, set $B_j^N=0$. For every nonzero $\xi_j^N$, Hahn--Banach gives $B_j^N\in G_D^*$ such that
\begin{equation}
    \|B_j^N\|_{G_D^*}=1,
    \quad
    B_j^N[\xi_j^N]=\|\xi_j^N\|_{G_D}.
\end{equation}
whence (\ref{rYiunj}) follows. 

Given $\xi\in S_N$, choose $j$ such that $\|\xi-\xi_j^N\|_{G_D}\le\delta_N$. Then,
\begin{equation}
\begin{split}
    &
    B_j^N[\xi]
    =
    B_j^N[\xi_j^N]+B_j^N[\xi-\xi_j^N]
    \ge
    \|\xi_j^N\|_{G_D}
    -
    \|\xi-\xi_j^N\|_{G_D}
    \\ & \ge
    \|\xi\|_{G_D}
    -
    2\|\xi-\xi_j^N\|_{G_D}
    \ge
    \|\xi\|_{G_D}-\varepsilon_N ,
\end{split}
\end{equation}
whence (\ref{ayBtJr}) follows. 
\end{proof}

We now combine the displacement approximation property with the finite dual norming of the residual trace set. The result shows that, once the equilibrium displacements over a compact data class are approximated in the displacement space, the reaction atoms may be chosen so as to enforce the Dirichlet constraint uniformly, yielding convergence of the resulting finite saddle minmax value maps. 

With $B_N$ as in Lemma~\ref{lem:automatic-finite-reaction-norming} and $\lambda$ as in (\ref{10Osu4}), we choose the reaction atoms 
\begin{equation} \label{HQEiY6}
    R_j^N:=-\lambda B_j^N,
    \quad 1\le j\le J_N,
\end{equation}
and set 
\begin{equation}
    C_N:=\operatorname{co}\{R_1^N,\ldots,R_{J_N}^N\}\subset G_D^* .
\end{equation}
We finally set 
\begin{equation}
    \mathcal G_N:=\mathcal G_{I_NJ_N}, 
\end{equation}
with $\mathcal G_{IJ}$ defined as in (\ref{eq:saddle-minmax-finite}) from displacement atoms $\{u_1^N,\ldots,u_{I_N}^N\}$ and reaction atoms $\{R_1^N,\ldots,R_{J_N}^N\}$. 

The following lemma supplies a useful equivalent form of $\mathcal G_N$.

\begin{lemma} \label{v82cve}
For every $z=(f,g)\in Z$,
\begin{equation}
    \mathcal G_N(z)
    =
    \max_{p\in\Delta_{I_N}}
    \min_{R\in C_N}
    \sum_{i=1}^{I_N}
    p_i
    \Bigl(
    \langle f,u_i^N\rangle_{X^*,X}
    +
    R[g-T_Du_i^N]
    -
    E(u_i^N)
    \Bigr).
\end{equation}
\end{lemma}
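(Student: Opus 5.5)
The identity follows from bilinearity of the game payoff: the minimization over $q\in\Delta_{J_N}$ is rewritten as a minimization over the convex hull $C_N$ via the linear map $q\mapsto\bar R_q$. We start from the definition \eqref{eq:saddle-minmax-finite} with $I=I_N$, $J=J_N$ and the payoff $\Phi_{ij}(z)=\langle f,u_i^N\rangle_{X^*,X}+R_j^N[g-T_Du_i^N]-\mathcal E(u_i^N)$. For fixed $p\in\Delta_{I_N}$ we expand the double sum. The terms $\langle f,u_i^N\rangle_{X^*,X}-\mathcal E(u_i^N)$ do not depend on $j$, so summing over $j$ with $\sum_j q_j=1$ leaves
\[
\sum_{i}p_i\bigl(\langle f,u_i^N\rangle_{X^*,X}-\mathcal E(u_i^N)\bigr).
\]
The reaction term is bilinear in $(p,q)$. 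Using linearity of each $R_j^N$ and of the pairing in the reaction slot, it equals
\[
\sum_i p_i\,\bar R_q[g-T_Du_i^N],\qquad \bar R_q:=\sum_j q_jR_j^N,
\]
as in \eqref{eq:saddle-minmax-active-sensitivities}. Hence, for every $(p,q)$,
\[
\sum_{i,j}p_iq_j\Phi_{ij}(z)=\Psi_p(\bar R_q),
\]
where $\Psi_p(R):=\sum_i p_i\bigl(\langle f,u_i^N\rangle_{X^*,X}+R[g-T_Du_i^N]-\mathcal E(u_i^N)\bigr)$.

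Next we use that the map $\Delta_{J_N}\ni q\mapsto\bar R_q\in G_D^*$ is onto $C_N=\operatorname{co}\{R_1^N,\dots,R_{J_N}^N\}$. This is the definition of the convex hull of finitely many points: it is exactly the set of convex combinations with weights in the simplex. Therefore $\min_{q\in\Delta_{J_N}}\Psi_p(\bar R_q)=\min_{R\in C_N}\Psi_p(R)$, with both minima attained. On the left, attainment follows from compactness of the simplex and continuity in $q$. On the right, $C_N$ is the continuous image of a compact set, hence compact in $G_D^*$, and $\Psi_p$ is affine and continuous in $R$. Taking the maximum over $p\in\Delta_{I_N}$ on both sides gives the claimed formula. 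The outer maximum is attained because the inner minimum of finitely many affine functions of $p$ (it suffices to minimize over the vertices $R_j^N$) is continuous in $p$.

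The only point needing care is the reparametrization of the inner player. One must check that replacing the mixed strategy $q$ by the averaged reaction $\bar R_q$ loses nothing, i.e. that the payoff depends on $q$ only through $\bar R_q$. This holds because $j$ enters $\Phi_{ij}$ only through the affine reaction term. This is the same separability noted after Proposition~\ref{prop:saddle-minmax-shape}. No minimax interchange is needed, since both sides are written in max--min order. We also note that the notation $E(u_i^N)$ in the statement is to be read as $\mathcal E(u_i^N)$.
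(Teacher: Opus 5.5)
Your proof is correct, but it reaches the identity by a slightly different route from the paper.

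The paper fixes $p$ and notes that $\Psi_{z,p}$ is affine and continuous on the compact polytope $C_N$. It then invokes the extreme-point theorem to conclude that the minimum over $C_N$ is attained at one of the atoms $R_j^N$. Finally it identifies $\min_j\Psi_{z,p}(R_j^N)$ with the minimum over $q\in\Delta_{J_N}$, so both sides are reduced to the pure-strategy minimum over the reaction atoms.

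You instead establish the exact identity $\sum_{i,j}p_iq_j\Phi_{ij}(z)=\Psi_p(\bar R_q)$. You then use that $q\mapsto\bar R_q$ maps $\Delta_{J_N}$ onto $C_N$, so the two inner problems have literally the same set of values. This needs neither the extreme-point theorem nor a vertex argument. It also makes explicit the separability (dependence on $q$ only through $\bar R_q$) that the paper's last equality uses implicitly.

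In exchange, the paper's route records along the way that the inner minimum over $C_N$ equals a finite minimum over the pure atoms $R_j^N$. That is exactly the form used in the proof of Theorem~\ref{thm:uniform-convergence}, where $\min_{R\in C_N}R[\cdot]$ is replaced by $-\lambda\sup_j B_j^N[\cdot]$. You invoke the same fact only in passing, to get continuity in $p$; it is not needed for the identity itself, since equal inner functions have equal maxima.
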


\begin{proof}
For fixed $z\in Z$ and $p\in\Delta_{I_N}$, define
\begin{equation}
    \Psi_{z,p}(R)
    :=
    \sum_{i=1}^{I_N}
    p_i
    \Bigl(
    \langle f,u_i^N\rangle_{X^*,X}
    +
    R[g-T_Du_i^N]
    -
    E(u_i^N)
    \Bigr),
    \quad R\in C_N .
\end{equation}
The map $R\mapsto \Psi_{z,p}(R)$ is continuous and affine. Since $C_N$ is the convex hull of finitely many points, it is a compact convex polytope. Hence, by the extreme-point theorem for affine functions on a compact polytope, the minimum of $\Psi_{z,p}$ over $C_N$ is attained at an extreme point of $C_N$. The extreme points of $C_N$ are contained in $\{R_1^N,\ldots,R_{J_N}^N\}$. Therefore,
\begin{equation}
\begin{split}
    &
    \min_{R\in C_N}\Psi_{z,p}(R)
    =
    \min_{1\le j\le J_N}\Psi_{z,p}(R_j^N)
    = \\ &
    \min_{q\in\Delta_{J_N}}
    \sum_{i=1}^{I_N}\sum_{j=1}^{J_N}
    p_iq_j
    \Bigl(
    \langle f,u_i^N\rangle_{X^*,X}
    +
    R_j^N[g-T_Du_i^N]
    -
    E(u_i^N)
    \Bigr).
\end{split}
\end{equation}
Taking the maximum over $p\in\Delta_{I_N}$ gives exactly the finite saddle value $\mathcal G_{I_NJ_N}$ defined in (\ref{eq:saddle-minmax-finite}), as advertised. 
\end{proof}

\begin{theorem}[Uniform convergence from displacement density]
\label{thm:uniform-convergence}
Assume \emph{(A1)--(A9)}. Let $K \subset Z$ be compact. Suppose $A_N$ uniformly bounded in $X$ and  
\begin{equation}
    \alpha_N
    :=
    \sup_{z\in K}\inf_{1\le i\le I_N}
    \|u_i^N-u_z\|_X
    =
    \operatorname{dist}_X(U_K,U_N)
    \to 0 .
\end{equation}
Choose reaction atoms as in Lemma~\ref{lem:automatic-finite-reaction-norming} and (\ref{HQEiY6}). Then,
\begin{equation} \label{omcGEx}
    \sup_{z\in K}\bigl(\mathcal G_N(z)-\mathcal G(z)\bigr)
    \le
    \lambda\varepsilon_N 
    \to 0,
\end{equation}
and
\begin{equation} \label{qVcsos}
    \sup_{z\in K}\bigl(\mathcal G(z)-\mathcal G_N(z)\bigr)
    \le
    \left(
    L_{K,M}+\lambda\|T_D\|_{\mathcal L(X,G_D)}
    \right)\alpha_N .
\end{equation}
In particular, $G_N(z) \to G(z)$ uniformly on $K$.
\end{theorem}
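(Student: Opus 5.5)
The plan is to work with the equivalent form of $\mathcal G_N$ from Lemma~\ref{v82cve} and to prove the two one-sided estimates separately. The upper bound \eqref{omcGEx} will come from the automatic norming of Lemma~\ref{lem:automatic-finite-reaction-norming} combined with a lifting comparison. The lower bound \eqref{qVcsos} will come from testing the maximizing player with a single well-chosen pure strategy.

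\emph{Upper bound.} Fix $z=(f,g)\in K$ and $p\in\Delta_{I_N}$, and set $\bar u:=\sum_i p_iu_i^N\in A_N$.

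1. Since the payoff in Lemma~\ref{v82cve} is affine in $u$ except for the energy, it equals $\langle f,\bar u\rangle+R[g-T_D\bar u]-\sum_ip_i\mathcal E(u_i^N)$. Convexity of $\mathcal E$ gives $-\sum_i p_i\mathcal E(u_i^N)\le-\mathcal E(\bar u)$.

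2. The residual $\xi:=g-T_D\bar u$ lies in $S_N$. Restricting the minimum over $C_N$ to the vertices $R_j^N=-\lambda B_j^N$ and using \eqref{ayBtJr},
\[
\min_{R\in C_N}R[\xi]\le -\lambda\sup_j B_j^N[\xi]\le -\lambda\|\xi\|_{G_D}+\lambda\varepsilon_N .
\]
Hence $\mathcal G_N(z)\le \sup_{u\in A_N}\bigl\{\langle f,u\rangle-\mathcal E(u)-\lambda\|g-T_Du\|_{G_D}\bigr\}+\lambda\varepsilon_N$.

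3. For $u\in A_N$, the lifted competitor $\hat u:=u+\Lift(g-T_Du)$ lies in $X_g$, so $\langle f,\hat u\rangle-\mathcal E(\hat u)\le\mathcal G(z)$. Both $u$ and $\hat u$ belong to the set on which $L_{K,M}$ is a Lipschitz constant, by \eqref{ClTt0f}. Therefore
\[
\langle f,u\rangle-\mathcal E(u)\le \mathcal G(z)+L_{K,M}\|\Lift\|\,\|g-T_Du\|_{G_D}\le \mathcal G(z)+\lambda\|g-T_Du\|_{G_D},
\]
using \eqref{10Osu4}. The penalty term cancels exactly, which gives \eqref{omcGEx}.

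\emph{Lower bound.} For $z\in K$, pick an index $i$ with $\|u_i^N-u_z\|_X\le\alpha_N$; the infimum is attained since $U_N$ is finite. Take the pure strategy $p=e_i$. Since $g=T_Du_z$ and $\|R\|_{G_D^*}\le\lambda$ on $C_N$ by \eqref{rYiunj},
\[
\min_{R\in C_N}R[g-T_Du_i^N]\ge-\lambda\|T_D\|\,\|u_z-u_i^N\|_X\ge -\lambda\|T_D\|\,\alpha_N .
\]
Both $u_i^N$ and $u_z$ lie in the $M$-ball, so the Lipschitz bound gives $\langle f,u_i^N\rangle-\mathcal E(u_i^N)\ge \mathcal G(z)-L_{K,M}\alpha_N$, because $\mathcal G(z)=\langle f,u_z\rangle-\mathcal E(u_z)$. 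Combining the two estimates yields \eqref{qVcsos}. Uniform convergence then follows from $\varepsilon_N\to0$ and $\alpha_N\to0$; Lemma~\ref{lem:strong-continuity-compactness} supplies the bound on $U_K$ used in \eqref{ClTt0f}.

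The main point needing care is step 3 of the upper bound. There we must check that the Lipschitz constant $L_{K,M}$ is valid on a set containing both $u$ and its lift $\hat u$; this is exactly why $L_{K,M}$ was defined on the union set. We must also check that $\lambda$ is large enough for the exact-penalty cancellation, which is guaranteed by \eqref{10Osu4}. Everything else reduces to convexity of $\mathcal E$ and the norming property of Lemma~\ref{lem:automatic-finite-reaction-norming}.
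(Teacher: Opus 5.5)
Your proposal is correct and follows the paper's proof essentially step for step. For the upper bound you use Jensen's inequality for the concave map $u\mapsto\langle f,u\rangle_{X^*,X}-\mathcal E(u)$, the norming estimate of Lemma~\ref{lem:automatic-finite-reaction-norming}, and the lifted competitor $\bar u_p+\Lift(g-T_D\bar u_p)$ with the exact-penalty cancellation ensured by \eqref{10Osu4}; for the lower bound you use a pure strategy at an atom within $\alpha_N$ of $u_z$, the bound $\|R\|_{G_D^*}\le\lambda$ on $C_N$, and the Lipschitz constant $L_{K,M}$, exactly as in the paper (your only cosmetic difference is passing to a supremum over $A_N$ of the penalized functional before comparing with $\mathcal G(z)$, where the paper compares for each fixed $p$ and maximizes at the end).
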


\begin{proof}
Fix $z=(f,g)\in K$. We first prove the upper bound. Let
$p\in\Delta_{I_N}$ and set
\begin{equation}
    \bar u_p:=\sum_{i=1}^{I_N}p_i u_i^N\in A_N .
\end{equation}
Since $u\mapsto \langle f,u\rangle_{X^*,X}-\mathcal E(u)$ is concave,
\begin{equation}
    \sum_{i=1}^{I_N}
    p_i
    \left(
    \langle f,u_i^N\rangle_{X^*,X}
    -\mathcal E(u_i^N)
    \right)
    \le
    \langle f,\bar u_p\rangle_{X^*,X}
    -\mathcal E(\bar u_p).
\end{equation}
In addition,
\begin{equation}
    \sum_{i=1}^{I_N}p_iR[g-T_Du_i^N]
    =
    R[g-T_D\bar u_p].
\end{equation}
By Lemma~\ref{lem:automatic-finite-reaction-norming},
\begin{equation}
\begin{split}
    \min_{R\in C_N}R[g-T_D\bar u_p]
    & =
    -\lambda
    \sup_{1\le j\le J_N}B_j^N[g-T_D\bar u_p]
    \\ & \le
    -\lambda\|g-T_D\bar u_p\|_{G_D}
    +
    \lambda\varepsilon_N .
\end{split}
\end{equation}
Therefore,
\begin{equation}
\begin{split}
    &
    \min_{R\in C_N}
    \sum_{i=1}^{I_N}p_i
    \left(
    \langle f,u_i^N\rangle_{X^*,X}
    +R[g-T_Du_i^N]
    -\mathcal E(u_i^N)
    \right)
    \le \\ & \quad
    \langle f,\bar u_p\rangle_{X^*,X}
    -\mathcal E(\bar u_p)
    -
    \lambda\|g-T_D\bar u_p\|_{G_D}
    +
    \lambda\varepsilon_N .
\end{split}
\end{equation}
Define
\begin{equation}
    \widehat u_p:=\bar u_p+\Lift(g-T_D\bar u_p).
\end{equation}
Then, $T_D\widehat u_p=g$, so $\widehat u_p\in X_g$, and
\begin{equation}
\begin{split}
    &
    \langle f,\bar u_p\rangle_{X^*,X}
    -\mathcal E(\bar u_p)
    \le \\ &
    \langle f,\widehat u_p\rangle_{X^*,X}
    -\mathcal E(\widehat u_p)
    +
    L_{K,M}\|\Lift(g-T_D\bar u_p)\|_X
    \le \\ &
    \langle f,\widehat u_p\rangle_{X^*,X}
    -\mathcal E(\widehat u_p)
    +
    L_{K,M}\|\Lift\|_{\mathcal L(G_D,X)}
    \|g-T_D\bar u_p\|_{G_D}.
\end{split}
\end{equation}
Furthermore, by (\ref{10Osu4}) ,
\begin{equation}
    \langle f,\bar u_p\rangle_{X^*,X}
    -\mathcal E(\bar u_p)
    -
    \lambda\|g-T_D\bar u_p\|_{G_D}
    \le
    \langle f,\widehat u_p\rangle_{X^*,X}
    -\mathcal E(\widehat u_p)
    \le
    \mathcal G(z).
\end{equation}
Hence, for every $p\in\Delta_{I_N}$,
\begin{equation}
    \min_{R\in C_N}
    \sum_{i=1}^{I_N}p_i
    \left(
    \langle f,u_i^N\rangle_{X^*,X}
    +R[g-T_Du_i^N]
    -\mathcal E(u_i^N)
    \right)
    \le
    \mathcal G(z)+\lambda\varepsilon_N .
\end{equation}
Taking the maximum over $p$ and appealing to Lemma~\ref{v82cve} gives
\begin{equation}
    \mathcal G_N(z)\le \mathcal G(z)+\lambda\varepsilon_N ,
\end{equation}
whence (\ref{omcGEx}) follows. 

For the lower bound, choose $i=i(z,N)$ such that
\begin{equation}
    \|u_i^N-u_z\|_X\le\alpha_N .
\end{equation}
Using the pure strategy $p=e_i$ in the definition of $\mathcal G_N$, we obtain
\begin{equation}
    \mathcal G_N(z)
    \ge
    \min_{R\in C_N}
    \left(
    \langle f,u_i^N\rangle_{X^*,X}
    +R[g-T_Du_i^N]
    -\mathcal E(u_i^N)
    \right).
\end{equation}
Since $T_Du_z=g$, it follows that
\begin{equation}
    g-T_Du_i^N=T_D(u_z-u_i^N).
\end{equation}
Moreover, every $R\in C_N$ satisfies
\begin{equation}
    \|R\|_{G_D^*}\le\lambda .
\end{equation}
Hence,
\begin{equation}
    \min_{R\in C_N}R[g-T_Du_i^N]
    \ge
    -\lambda\|T_D\|_{\mathcal L(X,G_D)}
    \|u_i^N-u_z\|_X ,
\end{equation}
and the Lipschitz bound gives
\begin{equation}
    \langle f,u_i^N\rangle_{X^*,X}
    -\mathcal E(u_i^N)
    \ge
    \langle f,u_z\rangle_{X^*,X}
    -\mathcal E(u_z)
    -
    L_{K,M}\|u_i^N-u_z\|_X .
\end{equation}
Since $u_z$ is the equilibrium displacement for $z$, we have
\begin{equation}
        \langle f,u_z\rangle_{X^*,X}
        -
        \mathcal E(u_z)
        =
        \mathcal G(z).
\end{equation}
Combining these estimates yields
\begin{equation}
        \mathcal G_N(z)
        \ge
        \mathcal G(z)
        -
        \left(
        L_{K,M}
        +
        \lambda\|T_D\|_{\mathcal L(X,G_D)}
        \right)\alpha_N ,
\end{equation}
and taking the supremum over $z\in K$ proves (\ref{qVcsos}). 

The two one-sided estimates imply uniform convergence on $K$.
\end{proof}

We note that Theorem~\ref{thm:uniform-convergence} allows the reaction atoms to be selected from the full dual space $G_D^*$. For each $N$, the displacement atoms determine the compact residual set $S_N$. Lemma~\ref{lem:automatic-finite-reaction-norming} then selects finitely many dual functionals that norm $S_N$ up to $\varepsilon_N$. Thus, the only operative approximation assumption in the theorem is the displacement density condition $\operatorname{dist}_X(U_K,U_N)\to 0$ . The reaction atoms are not prescribed in advance; they are added afterward to enforce the Dirichlet constraint on the residuals produced by $A_N$. 

It bears emphasis that the automatic reaction atoms are generally nonconstructive. They are obtained by applying Hahn--Banach to residuals in a finite net of $S_N$. Consequently, they may be distributional elements of $G_D^*=W^{-1+1/p,q}(\Gamma_D;\mathbb R^d)$ and need not admit a simple representation as $L^q$-surface tractions or as normal traces of smooth stress fields. 

Lemma~\ref{lem:automatic-finite-reaction-norming} ensures that suitable reaction atoms always exist in $G_D^*$. However, in actual calculations, the reaction atoms may be restricted to a structured reaction dictionary. For instance, for traction-form atoms the norming functionals are of the form
\begin{equation}
    B_j^N[\xi]
    =
    \int_{\Gamma_D}\psi_j^N\cdot\xi\,ds ,
\end{equation}
whereas for volumetric atoms the norming functionals are of the form
\begin{equation}
    B_j^N[\xi]
    =
    \int_\Omega b_j^N\cdot \Lift\xi\,dx
    -
    \int_\Omega \sigma_j^N:e(\Lift\xi)\,dx .
\end{equation}
Then, the chosen dictionary must be shown to approximate the full-dual norming functionals on the residual sets $S_N$.

\begin{example}[Saddle minmax convergence for the elementary truss example]\label{ex:truss-minmax-convergence}
{\rm For the truss of Example~\ref{oE9UI9}, set $k:=EA/L$. The mixed saddle Lagrangian is
\begin{equation}
    \mathcal L((u,v,R);z)
    =2fu+R(g-v)-2ku^2-\frac{k}{2}(u+v)^2,
\end{equation}
and the exact sensitivities are
\begin{equation}\label{eq:truss-example2-sensitivities}
    u_z=\frac{2f}{5k}-\frac{g}{5},
    \qquad
    R_z=-k(u_z+g)=-\frac{2f+4kg}{5}.
\end{equation}
Substitution gives
\begin{equation}\label{eq:truss-example2-value}
    \mathcal G(z)=\frac{2f^2}{5k}-\frac{2kg^2}{5}-\frac{2fg}{5}.
\end{equation}

For the normalized square $K=[-1,1]^2$ and $k=1$, take a rectangular grid of displacement atoms $(u_i,v_i)$ covering the exact ranges of $(u_z,g)$, and take two reaction atoms $\pm R_*$ with $R_*=1.25$, so that their convex hull contains all exact reactions over $K$. The finite game is then
\begin{equation}\label{eq:truss-example2-computable-game}
    \mathcal G_N(z)
    =
    \min_{R\in[-R_*,R_*]}
    \max_{(u_i,v_i)}
    \{2fu_i+R(g-v_i)-\mathcal E(u_i,v_i)\}.
\end{equation}
The resulting surfaces converge to the exact convex--concave value function, Fig.~\ref{fig:Truss3}, with the observed second-order atom-refinement rate shown in Fig.~\ref{fig:truss-example2-errors}.

\begin{figure}[h!]
\centering
\includegraphics[width=\textwidth]{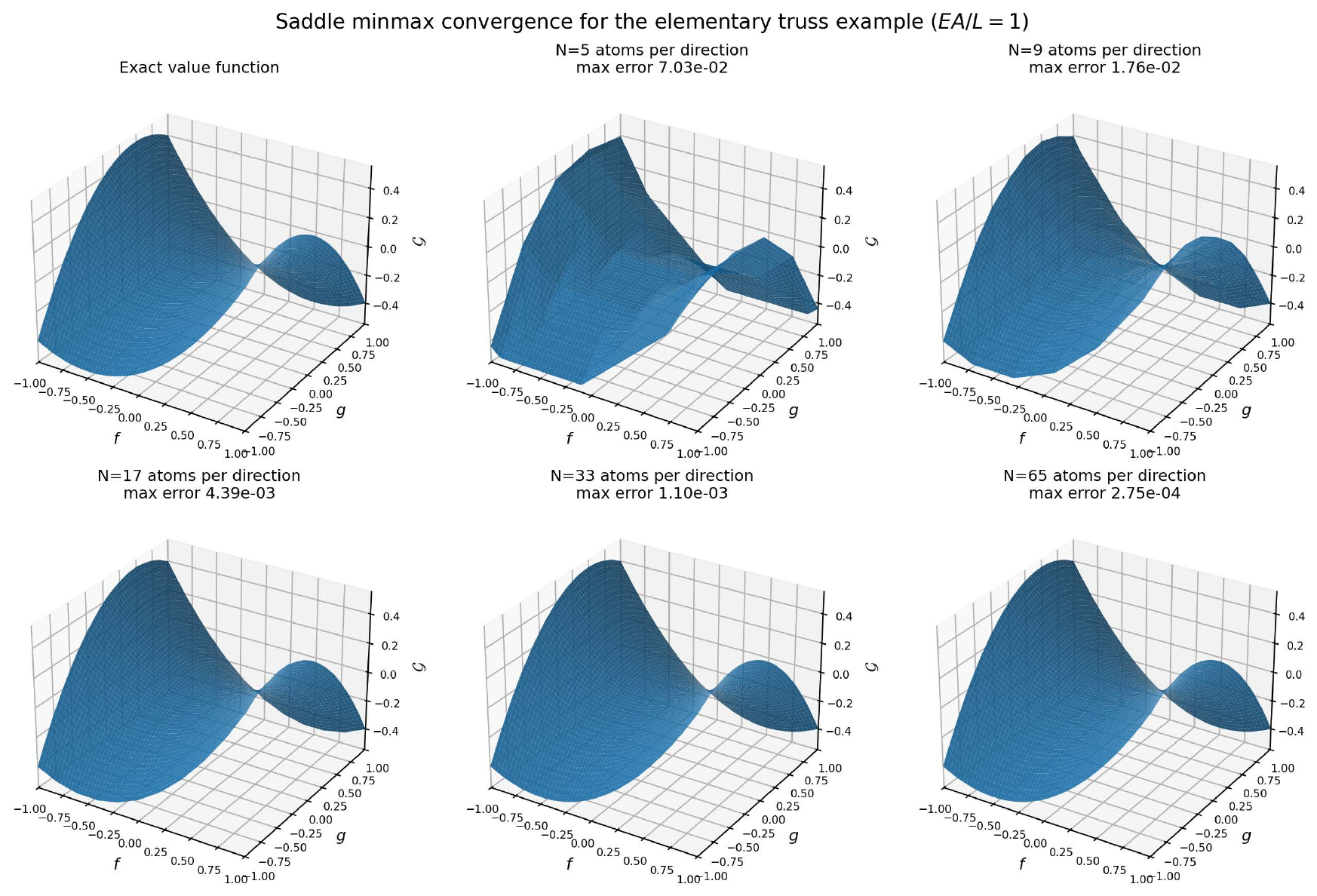}
\caption{Saddle minmax convergence for the elementary truss example. The first panel shows the exact value function \eqref{eq:truss-example2-value}; the remaining panels show finite saddle approximations \eqref{eq:truss-example2-computable-game} for increasingly refined displacement-atom grids.}
\label{fig:Truss3}
\end{figure}

\begin{figure}[h!]
\centering
\includegraphics[width=.75\textwidth]{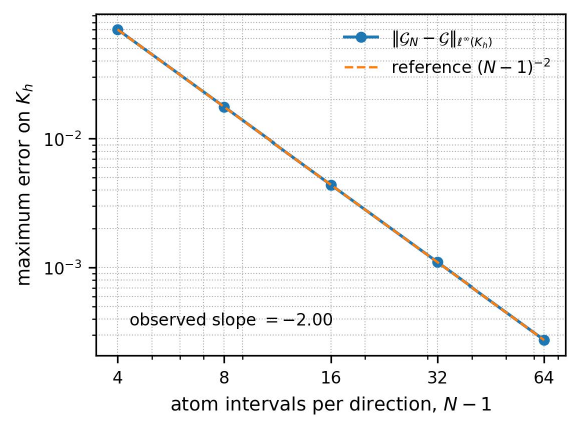}
\caption{Log--log convergence of the saddle minmax approximation for the truss example. The dashed guide is proportional to $(N-1)^{-2}$; the least-squares observed slope is $-2.00$.}
\label{fig:truss-example2-errors}
\end{figure}
} \hfill$\square$
\end{example}

\subsubsection{Weak convergence of mechanical readouts}

Theorem~\ref{thm:uniform-convergence} gives uniform convergence of the saddle minmax values. However, applications often concern other \emph{quantities of interest} obtained by applying bounded linear functionals to the mechanical readouts of $\mathcal G$, namely to its load and Dirichlet sensitivities. Such quantities of interest may represent averaged displacements, pointwise or localized response measurements when continuous, reaction resultants, or other experimentally accessible observables and design variables. The following corollary records the weak convergence statement needed to pass such quantities of interest to the limit.

\begin{corollary}[Weak convergence of mechanical readouts]
\label{cor:weak-convergence-readouts}
Assume the hypotheses of Theorem~\ref{thm:uniform-convergence} on every compact subset of $Z$, so that $\mathcal G_N\to\mathcal G$ locally uniformly on $Z$. Let $z_N\to z$ in $Z$. Let $(p_N,q_N)$ be a saddle strategy for $\mathcal G_N$ at $z_N$, and define
\begin{equation}
    u_N:=\sum_i p_{N,i}u_i^N,
    \quad
    R_N:=\sum_j q_{N,j}R_j^N .
\end{equation}
Assume, as in the construction of Theorem~\ref{thm:uniform-convergence}, that the displacement convex hulls are uniformly bounded in $X$ and that the reaction atoms are uniformly bounded in $G_D^*$. Then, up to subsequences,
\begin{equation}
    u_N\rightharpoonup u \in\partial_f\mathcal G(z) \;\, \text{in }X,
    \quad
    R_N\rightharpoonup R \in\partial_g\mathcal G(z) \;\, \text{in }G_D^*.
\end{equation}
If $\mathcal G$ is differentiable at $z$, then
\begin{equation}
    u=D_f\mathcal G(z),
    \quad
    R=D_g\mathcal G(z),
\end{equation}
and the entire sequence converges weakly to this pair.
\end{corollary}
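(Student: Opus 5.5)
The plan is to pass the finite supporting inequalities of Proposition~\ref{prop:saddle-minmax-sense} to the limit, using uniform convergence on a compact neighbourhood and weak compactness.

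\emph{Step 1 (weak compactness).} By assumption $u_N\in A_N$ is bounded by $M$ in $X$. Since $X$ is reflexive, a subsequence satisfies $u_N\rightharpoonup u$. The reactions $R_N\in C_N$ satisfy $\|R_N\|_{G_D^*}\le\lambda$. The space $G_D^*$ is the dual of the reflexive space $G_D=W^{1/q,p}$, hence reflexive, so a further subsequence satisfies $R_N\rightharpoonup R$.

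\emph{Step 2 (finite supporting inequalities).} By Proposition~\ref{prop:saddle-minmax-sense}, for every $\tilde f\in X^*$ and $\tilde g\in G_D$ (with $z_N=(f_N,g_N)$),
\begin{equation*}
\mathcal G_N(\tilde f,g_N)\ge \mathcal G_N(z_N)+\langle \tilde f-f_N,u_N\rangle_{X^*,X},\qquad
\mathcal G_N(f_N,\tilde g)\le \mathcal G_N(z_N)+R_N[\tilde g-g_N].
\end{equation*}
Fix $\tilde f$. The set $K:=\{z_N\}\cup\{z\}\cup\{(\tilde f,g_N)\}\cup\{(\tilde f,g)\}$ is compact, being a convergent sequence together with its limit. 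Local uniform convergence, together with continuity of $\mathcal G$ (Proposition~\ref{prop:value-properties}), gives $\mathcal G_N(z_N)\to\mathcal G(z)$ and $\mathcal G_N(\tilde f,g_N)\to\mathcal G(\tilde f,g)$. This is the one place the joint (uniform-in-$z$) convergence is used: pointwise convergence alone would not handle the moving base point $z_N$.

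For the pairing, split it as
\begin{equation*}
\langle \tilde f-f_N,u_N\rangle=\langle \tilde f-f,u_N\rangle+\langle f-f_N,u_N\rangle.
\end{equation*}
The first term converges by weak convergence of $u_N$. The second term is bounded by $\|f-f_N\|_{X^*}M\to0$, a strong--weak product. The same splitting applies to $R_N[\tilde g-g_N]$, using $\|R_N\|\le\lambda$ and $g_N\to g$ strongly. Passing to the limit yields $u\in\partial_f\mathcal G(z)$ and $R\in\partial_g\mathcal G(z)$, as defined in \eqref{OtNsVO}.

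\emph{Step 3 (differentiable case).} If $\mathcal G$ is Fréchet (even Gâteaux) differentiable at $z$, then the partial subdifferential and superdifferential are singletons, namely $\{D_f\mathcal G(z)\}$ and $\{D_g\mathcal G(z)\}$. Indeed, for $u'\in\partial_f\mathcal G(z)$, comparing directional difference quotients in $\pm\delta f$ forces $\langle\delta f,u'\rangle=D_f\mathcal G(z)[\delta f]$ for all $\delta f$. This identifies $u'$ in $X\simeq X^{**}$, and the argument for the Dirichlet side is the same. Every subsequence of $(u_N,R_N)$ is bounded and, by Steps 1--2, has a further subsequence converging weakly to this unique pair. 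The standard subsequence principle then gives weak convergence of the whole sequence.

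I do not expect a genuine obstacle. The only care needed is in Step 2: invoking uniform convergence on a compact set containing the moving points, and handling the strong--weak products. I would also remark explicitly that reflexivity of $G_D^*$ is what allows weak rather than weak-$*$ convergence.
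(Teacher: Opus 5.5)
Your proposal is correct and follows essentially the same route as the paper. Both arguments obtain weak compactness from the uniform bounds and reflexivity, then pass the finite supporting inequalities of Proposition~\ref{prop:saddle-minmax-sense} to the limit using uniform convergence on the compact set $\{z\}\cup\{z_N\}\cup\{(\tilde f,g)\}\cup\{(\tilde f,g_N)\}$, and finish with uniqueness of the supporting pair plus the subsequence principle at differentiability points. You also make explicit two details the paper leaves implicit: the continuity of $\mathcal G$ needed to get $\mathcal G_N(z_N)\to\mathcal G(z)$, and the strong--weak splitting of the pairings.
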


\begin{proof}
The readouts satisfy $u_N\in A_N$ and $R_N\in \operatorname{co}\{R_1^N,\ldots,R_{J_N}^N\}$. By the assumed uniform boundedness and reflexivity of $X$ and $G_D^*$, subsequential weak limits exist. Proposition~\ref{prop:saddle-minmax-sense} gives $u_N\in\partial_f\mathcal G_N(z_N)$ and $R_N\in\partial_g\mathcal G_N(z_N)$.

Fix $\tilde f\in X^*$. The compact set
\begin{equation}
    K_{\tilde f}:=\{z\}\cup\{z_N:N\ge 1\}\cup\{(\tilde f,g)\}\cup\{(\tilde f,g_N):N\ge 1\}
\end{equation}
is compact in $Z$. The load-supporting inequality for $\mathcal G_N$ gives
\begin{equation}
    \mathcal G_N(\tilde f,g_N)
    \ge
    \mathcal G_N(f_N,g_N)+\langle \tilde f-f_N,u_N\rangle_{X^*,X}.
\end{equation}
Passing to the limit along a weakly convergent subsequence, using local uniform convergence on $K_{\tilde f}$, $z_N\to z$, and weak convergence of $u_N$, yields
\begin{equation}
    \mathcal G(\tilde f,g)
    \ge
    \mathcal G(f,g)+\langle \tilde f-f,u\rangle_{X^*,X}.
\end{equation}
Since $\tilde f$ is arbitrary, $u\in\partial_f\mathcal G(z)$.

The same argument in the Dirichlet variable gives, for every $\tilde g\in G_D$,
\begin{equation}
    \mathcal G(f,\tilde g)
    \le
    \mathcal G(f,g)+R[\tilde g-g],
\end{equation}
and hence $R\in\partial_g\mathcal G(z)$. At differentiability points the supporting pair is unique, so every weakly convergent subsequence has the same limit; boundedness then gives weak convergence of the entire sequence.
\end{proof}

The corollary shows that bounded mechanical readouts of the finite saddle games are stable under weak convergence and identify the corresponding supporting slopes of the exact value map. Consequently, any quantity of interest given by a bounded linear functional on $X$ or on $G_D^*$ converges along the same subsequences. At differentiability points of $\mathcal G$, the limiting readout is unique, and the observables converge to the quantities computed from $D_f\mathcal G(z)$ and $D_g\mathcal G(z)$ themselves.

\section{Fourier atoms and cell-center quadrature implementation}

This section turns the preceding abstract atom construction into computable Fourier dictionaries and quadrature rules. Specifically, displacement and reaction atoms are generated using finite Fourier expansions. The attendant integrals are then evaluated using numerical quadrature.

\subsection{Fourier displacement atoms and boundary reaction atoms}

\begin{figure}[h!]
\centering
\includegraphics[width=0.8\textwidth]{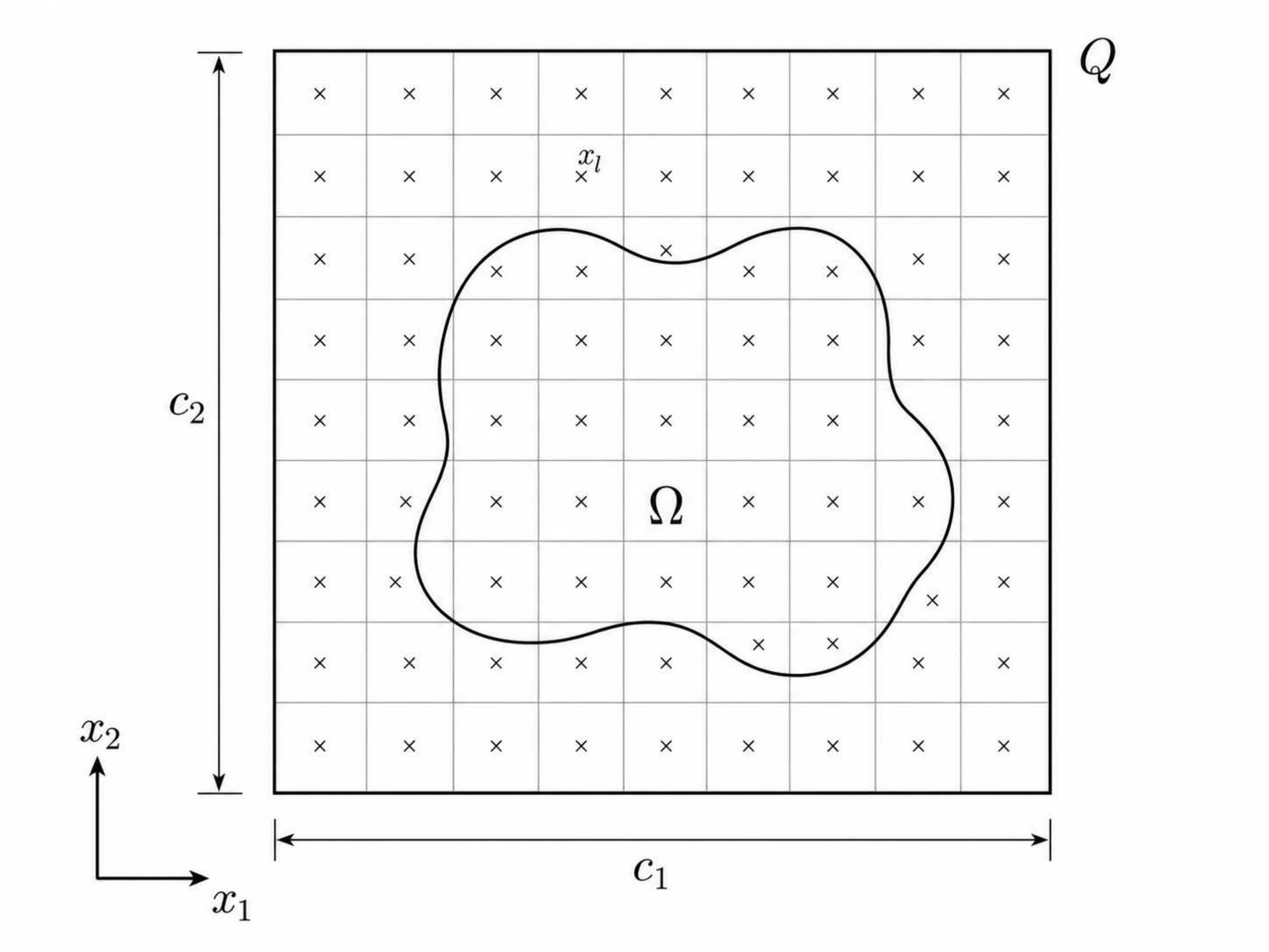}
\caption{Schema for the generation of manufactured displacement and reaction atoms using finite Fourier expansions and numerical quadrature.} \label{fig:Schema}
\end{figure}

A convenient source of manufactured displacement atoms is a finite Fourier expansion on a background cube, Fig.~\ref{fig:Schema}. Let
\begin{equation} \label{IFLb2D}
    Q:=\prod_{\alpha=1}^d(0,c_\alpha),
    \quad \Omega\Subset Q,
\end{equation}
and introduce the modes
\begin{equation}
    e_l(x):=|Q|^{-1/2}
    \exp\left(2\pi i\sum_{\alpha=1}^d l_\alpha\frac{x_\alpha}{c_\alpha}\right),
    \quad l\in\mathbb Z^d.
\end{equation}
Real-valued atoms are obtained from the associated sine--cosine basis or by imposing the conjugacy condition $a_{-l}=\overline a_l$ on coefficients. A typical manufactured displacement atom is, then,
\begin{equation}\label{eq:fourier-displacement-atom}
    u_i(x)=\sum_{l\in I_L}a_{i,l}e_l(x),
    \quad
    I_L:=\{l\in\mathbb Z^d:|l|_\infty\leq L\} .
\end{equation}
No projection onto a fixed boundary class is needed, because the trace $T_Du_i$ is part of the manufactured datum.

A convenient, purely volumetric, way of generating reaction atoms is to start from
stress-field atoms in the bulk, e.g., by a finite Fourier expansion of the form
\begin{equation}\label{eq:fourier-stress-atom}
    \sigma_j(x)=\sum_{m\in I_M}s_{j,m}e_m(x),
    \quad
    I_M:=\{m\in\mathbb Z^d:|m|_\infty\leq M\},
\end{equation}
with resultant body forces
\begin{equation}
    b_j=-\operatorname{div}\sigma_j .
\end{equation}
Define
\begin{equation}\label{eq:stress-generated-reaction-atom}
    R_j[\eta]
    :=
    \int_\Omega b_j\cdot \Lift\eta\,dx
    -
    \int_\Omega \sigma_j:e(\Lift\eta)\,dx,
    \quad
    \eta\in G_D .
\end{equation}
Then, $R_j\in G_D^*$ by H\"older's inequality and boundedness of the lifting $\Lift:G_D\to X$. Thus, stress atoms in $\Omega$ generate admissible reaction atoms without requiring explicit boundary quadrature. For the elementary saddle payoff, this strategy gives the computable volume formula
\begin{equation}\label{eq:stress-generated-reaction-payoff}
    R_j[g-T_Du_i]
    =
    \int_\Omega b_j\cdot \Lift(g-T_Du_i)\,dx
    -
    \int_\Omega \sigma_j : e\bigl(\Lift(g-T_Du_i)\bigr)\,dx .
\end{equation}
The corresponding coupling coefficient is
\begin{equation}\label{eq:stress-generated-coupling}
    C_{ij}
    =
    -R_j[T_Du_i]
    =
    -\int_\Omega b_j\cdot \Lift(T_Du_i)\,dx
    +
    \int_\Omega \sigma_j : e\bigl(\Lift(T_Du_i)\bigr)\,dx .
\end{equation}

If, in addition, the lifting is chosen to have zero trace on $\Gamma_N$, or the stress atom has homogeneous Neumann traction in the corresponding weak sense, then \eqref{eq:stress-generated-reaction-atom} is precisely the weak volume representation of the Dirichlet reaction induced by $\sigma_j$, with the standing sign convention for the reaction variable. In the present manufactured-data construction, however, the definition \eqref{eq:stress-generated-reaction-atom} is sufficient: it defines an element of $G_D^*$ entirely through integrals over $\Omega$.

\begin{proposition}[Density of Fourier displacement atoms]
\label{prop:fourier-displacement-density}
Assume \emph{(A1)--(A9)}. Let $K \subset Z$ be compact. Then, there exists a sequence of finite collections $U_L=\{u^L_1,\ldots,u^L_{I_L}\}\subset X$ of Fourier displacement atoms such that
\begin{equation}
    \alpha_L
    :=
    \sup_{z\in K}\inf_{1\le i\le I_L}
    \|u^L_i-u_z\|_X
    \to  0,
    \quad\text{as }L\to\infty .
\end{equation}
Consequently, the Fourier displacement atoms satisfy the displacement-density assumption of Theorem~\ref{thm:uniform-convergence}.
\end{proposition}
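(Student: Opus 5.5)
Three ingredients carry the argument: compactness of $U_K$ from Lemma~\ref{lem:strong-continuity-compactness}, density of trigonometric polynomials on $Q$ restricted to $\Omega$ in $X=W^{1,p}(\Omega;\mathbb R^d)$, and a finite-net argument. By Lemma~\ref{lem:strong-continuity-compactness}, under (A1)--(A9) the set $U_K=\{u_z:z\in K\}$ is compact in $X$. It therefore suffices to show that the restrictions to $\Omega$ of trigonometric polynomials $\sum_{l\in I_L}a_le_l$ are dense in $X$, with $L$ finite. Given that density, for each $L$ one chooses finitely many atoms approximating a finite net of $U_K$ at a scale $\delta_L\downarrow0$, which gives $\alpha_L\to0$.

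\textbf{Density step.} Since $\Omega$ is bounded Lipschitz by (A1), the Stein extension operator $\mathsf{Ext}:W^{1,p}(\Omega)\to W^{1,p}(\mathbb R^d)$ is bounded. Because $\Omega\Subset Q$, I fix a cutoff $\chi\in C_c^\infty(Q)$ with $\chi=1$ on a neighbourhood of $\overline\Omega$. Then for $u\in X$ the field $\tilde u:=\chi\,\mathsf{Ext}\,u$ lies in $W^{1,p}_0(Q)$, and extending it $Q$-periodically gives a function in $W^{1,p}_{\rm per}(Q)$ that coincides with $u$ on $\Omega$. Convolution with a smooth approximate identity gives a smooth periodic $\phi$ with $\|\phi-\tilde u\|_{W^{1,p}(Q)}<\delta/2$. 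For smooth periodic $\phi$, the Fejér means $\sigma_L\phi$ (equivalently, the partial sums over the cubes $I_L$ of its rapidly decaying Fourier series) converge uniformly together with their first derivatives. They therefore converge in $W^{1,p}(Q)$, hence in $W^{1,p}(\Omega)$ after restriction. Real-valuedness is kept by taking Fejér means, or by the conjugacy condition $a_{-l}=\overline a_l$. The conclusion is that for every $u\in X$ and $\delta>0$ there are a finite $L$ and a real trigonometric polynomial $P$ with $\|P|_\Omega-u\|_X<\delta$.

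\textbf{Net and uniformity step.} Choose $\delta_n\downarrow0$. By compactness, take a finite $\delta_n$-net $\{u_{z_1},\dots,u_{z_{k_n}}\}\subset U_K$. For each net point pick a trigonometric polynomial $P_{n,k}$ with $\|P_{n,k}-u_{z_k}\|_X<\delta_n$, and let $L_n$ be the largest degree among them, taken nondecreasing. Set $U_L:=\{P_{n,k}:1\le k\le k_n\}$ for $L_n\le L<L_{n+1}$. Polynomials of lower degree also belong to the span of the modes in $I_L$, so they are legitimate atoms of the form \eqref{eq:fourier-displacement-atom}. For each $z\in K$ the triangle inequality then gives $\inf_i\|u^L_i-u_z\|_X\le2\delta_n$, so $\alpha_L\to0$. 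To meet the uniform boundedness required of $A_N=\operatorname{co}U_N$ in Theorem~\ref{thm:uniform-convergence}, note that every atom lies within $\delta_1$ of the bounded set $U_K$. The convex hulls therefore lie in a fixed ball of radius $\sup_{U_K}\|u\|_X+\delta_1$, and the hypotheses of that theorem are met.

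\textbf{Main obstacle.} The delicate point is the density step. It requires an extension that is compatible with periodization, and hence the cutoff $\chi$ supported strictly inside $Q$, which is where $\Omega\Subset Q$ is used. It also requires convergence of the Fourier approximation in $W^{1,p}$ for every $p\in(1,\infty)$, not only for $p=2$. I would sidestep any $L^p$ Fourier-multiplier theory by first mollifying to a smooth periodic function, for which uniform $C^1$ convergence of the Fejér means, or of the absolutely convergent Fourier series, is elementary.
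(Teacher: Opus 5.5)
Your proposal is correct and follows the paper's proof essentially step for step. It uses compactness of $U_K$ via Lemma~\ref{lem:strong-continuity-compactness}, density of restricted trigonometric polynomials via extension, cutoff, mollification and Fourier/Ces\`aro approximation of a smooth function, and a finite-net argument with a triangle inequality. The differences are cosmetic: you periodize before mollifying, whereas the paper mollifies to $C_c^\infty(Q)$ first. You also verify explicitly that the convex hulls of the atoms stay in a fixed ball, as required by Theorem~\ref{thm:uniform-convergence}, which the paper leaves implicit.
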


\begin{proof}
By Lemma~\ref{lem:strong-continuity-compactness}, the equilibrium map $z\mapsto u_z$ is strongly continuous from $Z$ to $X$. Since $K$ is compact, $U_K$ is compact in $X$, hence bounded.

Next, we note that restrictions to $\Omega$ of finite Fourier sums on $Q$ are dense in $X$. This uses the strict immersion $\overline\Omega\subset Q$. Let $u\in W^{1,p}(\Omega;\mathbb R^d)$ and $\varepsilon>0$. By the Sobolev extension theorem for Lipschitz domains, extend $u$ to a field on a slightly larger Lipschitz set contained in $Q$. Choose a smooth cutoff equal to one on $\overline\Omega$ and compactly supported in $Q$. After multiplication by this cutoff and standard mollification, obtain $\varphi\in C_c^\infty(Q;\mathbb R^d)$ whose restriction to $\Omega$ is within $\varepsilon/2$ of $u$ in $W^{1,p}(\Omega;\mathbb R^d)$. The zero extension of $\varphi$ to the periodic torus associated with $Q$ belongs to $W^{1,p}_{\rm per}(Q;\mathbb R^d)$, and its periodic Fourier partial sums, or equivalently their Ces\`aro means, converge to it strongly in $W^{1,p}(Q;\mathbb R^d)$. Restricting to $\Omega$ gives, for every $u\in X$ and every $\varepsilon>0$, a finite Fourier sum
\begin{equation}
    v(x)=\sum_{|l|_\infty\le L} a_l e_l(x)\big|_{\Omega},
    \quad
    \|v-u\|_X<\varepsilon .
\end{equation}

Let $\varepsilon>0$. Since $U_K$ is compact in $X$, there are finitely many fields $u^1,\ldots,u^m\in U_K$ such that
\begin{equation}
    U_K\subset \bigcup_{r=1}^m
    \{u\in X:\|u-u^r\|_X<\varepsilon/2\}.
\end{equation}
For every $r=1,\ldots,m$, choose a finite Fourier sum $v^r$ such that $\|v^r-u^r\|_X < \varepsilon/2$. Let $L_\varepsilon$ be the largest Fourier order needed among these finitely many approximants and define $U_{L_\varepsilon}:=\{v^1,\ldots,v^m\}$. Then, for every $z\in K$, there is an index $r$ such that $\|u_z-u^r\|_X<\varepsilon/2$, and, therefore,
\begin{equation}
    \inf_{v\in U_{L_\varepsilon}}\|u_z-v\|_X
    \le
    \|u_z-v^r\|_X
    \le
    \|u_z-u^r\|_X+\|u^r-v^r\|_X
    <\varepsilon .
\end{equation}
Taking the supremum over $z\in K$ further gives
\begin{equation}
    \sup_{z\in K}\inf_{v\in U_{L_\varepsilon}}
    \|u_z-v\|_X<\varepsilon .
\end{equation}
Since $\varepsilon>0$ is arbitrary, we may choose a sequence $\varepsilon_L\downarrow0$ and corresponding finite Fourier dictionaries $U_L$ with
\begin{equation}
    \sup_{z\in K}\inf_{u_i^L\in U_L}
    \|u_i^L-u_z\|_X\le \varepsilon_L\to0 ,
\end{equation}
which is the displacement-density condition.
\end{proof}

The abstract norming step in Lemma~\ref{lem:automatic-finite-reaction-norming} guarantees the existence of finitely many reaction functionals in the full dual space $G_D^*$, but it does not by itself identify a computable reaction dictionary. For the Fourier construction, this gap is closed by showing that the volumetric reaction atoms generated by Fourier stress fields are sufficiently rich to reproduce the same finite norming property on each residual net. This is the content of the next lemma.

We shall use the following trace-density and trace-realization facts; see, for instance, Adams--Fournier~\cite{AdamsFournier2003} and McLean~\cite{McLean2000}. The mixed-boundary trace setting is understood in the admissible-patch sense used in~\cite{Groeger1989, HallerDintelmannJonssonKneesRehberg2016, AuscherBadrHallerDintelmannRehberg2015}. Smooth Dirichlet traction functionals are dense in $G_D^*=W^{-1+1/p,q}(\Gamma_D;\mathbb R^d)$. Moreover, every smooth Dirichlet traction may be represented, up to arbitrary accuracy in $G_D^*$, by the weak normal trace of a smooth symmetric stress field on $\Omega$ with no unwanted contribution on $\Gamma_N$; equivalently, either the lifting $\Lift\eta$ is chosen with zero trace on $\Gamma_N$, or the stress field is chosen with homogeneous weak normal trace on $\Gamma_N$. Finally, restrictions to $\Omega$ of finite Fourier symmetric stress fields on the background cube are dense in $W^{1,q}(\Omega;\mathbb S^d)$ for the compatible stress class under consideration.

\begin{lemma}[Fourier reaction atoms norm finite residual nets]
\label{lem:fourier-reactions-norm-residual-net}
Assume the trace-density and Fourier stress-density facts stated above. Let $\Xi_N = \{\xi_1^N, \ldots, \xi_{m_N}^N\}\subset S_N$ be a finite residual net. Then, the Fourier reaction atoms norm the finite residual net $\Xi_N$ up to a prescribed error $\rho_N>0$.
\end{lemma}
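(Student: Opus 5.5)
The plan is to mimic the Hahn--Banach construction of Lemma~\ref{lem:automatic-finite-reaction-norming} and then approximate each norming functional by a Fourier-generated reaction atom of the form \eqref{eq:stress-generated-reaction-atom}. Concretely, the statement to be proved is: given $\rho_N>0$, there exist Fourier reaction atoms $\tilde B_1^N,\ldots,\tilde B_{m_N}^N$ with $\|\tilde B_j^N\|_{G_D^*}\le 1$ and
\begin{equation*}
    \max_{1\le j\le m_N}\tilde B_j^N[\xi]\ \ge\ \|\xi\|_{G_D}-\rho_N,
    \quad \xi\in\Xi_N .
\end{equation*}
Combined with the $\delta_N$-net argument of Lemma~\ref{lem:automatic-finite-reaction-norming}, this yields the norming property \eqref{ayBtJr} on all of $S_N$ with $\varepsilon_N$ replaced by $\rho_N+2\delta_N$.

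\emph{Step 1: exact norming.} For each nonzero $\xi_j^N$, take $B_j^N\in G_D^*$ with $\|B_j^N\|=1$ and $B_j^N[\xi_j^N]=\|\xi_j^N\|_{G_D}$ (Hahn--Banach). If $\xi_j^N=0$, set $B_j^N=0$.

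\emph{Step 2: smooth tractions.} Fix $\eta:=\rho_N/(4\max_j\|\xi_j^N\|_{G_D}+4)$. Using density of smooth Dirichlet tractions in $G_D^*$, pick smooth $\psi_j$ with functional $T_j$ satisfying $\|T_j-B_j^N\|_{G_D^*}\le\eta$.

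\emph{Step 3: smooth stress fields.} Realize each $T_j$, to accuracy $\eta$ in $G_D^*$, as the weak normal trace of a smooth symmetric stress $\sigma$ on $\Omega$ with no contribution on $\Gamma_N$. The weak-trace identity then shows that the functional $\eta'\mapsto\int_\Omega(-\operatorname{div}\sigma)\cdot\Lift\eta'-\sigma:e(\Lift\eta')\,dx$ coincides with the traction pairing, with the sign convention fixed in the text. Here I must check the sign and the lifting-independence caveat; if needed, flip the sign of $\sigma$.

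\emph{Step 4: Fourier stresses.} Replace $\sigma$ by a finite Fourier stress $\sigma_j$ close in $W^{1,q}(\Omega;\mathbb S^d)$, using the stated Fourier stress density. The map $\sigma\mapsto R_\sigma$ defined by \eqref{eq:stress-generated-reaction-atom} is bounded linear from $W^{1,q}$ to $G_D^*$, with norm at most $C\|\Lift\|$ by H\"older. Hence $\|R_{\sigma_j}-T_j\|\le 2\eta$ for a fine enough Fourier order, and so $\|R_{\sigma_j}-B_j^N\|\le 3\eta$.

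\emph{Step 5: normalize and estimate.} Set $\tilde B_j^N:=R_{\sigma_j}/\max(1,\|R_{\sigma_j}\|)$, which is still a Fourier-generated atom (scalar multiples of Fourier stresses are Fourier stresses). Then $\|\tilde B_j^N-B_j^N\|\le 6\eta$ (crudely). Consequently
\begin{equation*}
    \tilde B_j^N[\xi_j^N]\ \ge\ \|\xi_j^N\|-6\eta\|\xi_j^N\|\ \ge\ \|\xi_j^N\|-\rho_N
\end{equation*}
after shrinking the constant in $\eta$. Finiteness of the net lets a single Fourier order $M$ serve all $j$.

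The main obstacle is Step~3: producing a stress field whose volumetric reaction functional reproduces a prescribed $\Gamma_D$ traction with no spurious $\Gamma_N$ contribution, and doing so compatibly with the chosen lifting $\Lift$. I would first try choosing $\Lift$ with zero trace on $\Gamma_N$, so that only the $\Gamma_D$ part of the normal trace is seen. Then I would extend the smooth traction to a smooth boundary field and solve an auxiliary divergence problem, or simply take $\sigma=\chi\,\psi\otimes\nu$-type symmetric constructions near $\Gamma_D$. In either case I would rely on the assumed trace-realization fact rather than prove it.
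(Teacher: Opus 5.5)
Your proposal is correct and follows essentially the same route as the paper: Hahn--Banach norming, then approximation by a smooth $\Gamma_D$ traction, then realization through the assumed weak-normal-trace fact for smooth stresses, then Fourier approximation in $W^{1,q}(\Omega;\mathbb S^d)$ using continuity of $\sigma\mapsto R_\sigma$ into $G_D^*$, and finally normalization by $\max\{1,\|\cdot\|_{G_D^*}\}$. The only difference is bookkeeping: you track the operator-norm distance to the Hahn--Banach functional with a single tolerance $\eta$, whose constant must indeed be shrunk so that $6\eta\max_j\|\xi_j^N\|_{G_D}\le\rho_N$, whereas the paper splits $\rho_N$ into thirds on the values at $\xi_a^N$ and carries a separate norm slack $\gamma_a$.
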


\begin{proof}
Fix $a\in\{1,\ldots,m_N\}$. If $\xi_a^N=0$, the zero reaction atom suffices. Otherwise, by Hahn--Banach and the density of smooth Dirichlet tractions in $G_D^*$, there exists a smooth traction $t_a^N$ whose induced functional
\begin{equation}
    T_a^N[\eta] := \int_{\Gamma_D} t_a^N \cdot \eta \, ds
\end{equation}
satisfies
\begin{equation}
    T_a^N[\xi_a^N] \ge \|\xi_a^N\|_{G_D} - \frac{\rho_N}{3},
    \qquad
    \|T_a^N\|_{G_D^*} \le 1+\gamma_a .
\end{equation}
where $\gamma_a>0$ is chosen small enough below. Choose a smooth symmetric stress field $\tau_a^N$ on $\overline\Omega$ whose weak normal trace realizes the reaction traction ${t}_a^N=-\tau_a^N\nu$ on $\Gamma_D$ and gives no contribution on $\Gamma_N$ in the sense specified above, i.e.,
\begin{equation}
        \int_\Omega (-\operatorname{div}\tau_a^N)\cdot \Lift\eta\,dx
        -
        \int_\Omega \tau_a^N:e(\Lift\eta)\,dx
        =
        {T}_a^N[\eta],
        \quad \eta\in G_D .
\end{equation}
Indeed, for symmetric $\tau_a^N$, integration by parts gives the left-hand side as $-\int_{\partial\Omega}(\tau_a^N\nu)\cdot\Lift\eta\,ds$, so the Dirichlet reaction traction is $t_a^N=-\tau_a^N\nu$ in the sign convention of \eqref{eq:Dirichlet-reaction}.
Approximate $\tau_a^N$ in $W^{1,q}(\Omega;\mathbb S^d)$ by a finite Fourier stress field $\sigma_a^N$ restricted from the background cube. Since the map
\begin{equation}
    \sigma\mapsto
    \left[
    \eta\mapsto
    \int_\Omega (-\operatorname{div}\sigma)\cdot \Lift\eta\,dx
    -
    \int_\Omega \sigma:e(\Lift\eta)\,dx
    \right]
\end{equation}
is continuous from $W^{1,q}(\Omega;\mathbb S^d)$ into $G_D^*$, the approximation may be chosen so that the induced Fourier reaction functional
\begin{equation}
    \widetilde B_a^N[\eta]
    :=
    \int_\Omega (-\operatorname{div}\sigma_a^N)\cdot \Lift\eta\,dx
    -
    \int_\Omega \sigma_a^N:e(\Lift\eta)\,dx
\end{equation}
satisfies
\begin{equation}
    \widetilde B_a^N[\xi_a^N]
    \ge
    \|\xi_a^N\|_{G_D}-\frac{2\rho_N}{3},
    \quad
    \|\widetilde B_a^N\|_{G_D^*}\le 1+\gamma_a .
\end{equation}
Finally, define
\begin{equation}
    B_a^N:=\frac{\widetilde B_a^N}
    {\max\{1,\|\widetilde B_a^N\|_{G_D^*}\}} .
\end{equation}
By taking $\gamma_a$ sufficiently small, the normalization changes the value on $\xi_a^N$ by at most $\rho_N/3$. Hence,
\begin{equation}
    \|B_a^N\|_{G_D^*}\le 1,
    \quad
    B_a^N[\xi_a^N]\ge \|\xi_a^N\|_{G_D}-\rho_N .
\end{equation}
Repeating this construction for every nonzero point of the finite set $\Xi_N$, and adding the zero atom if needed, gives the required finite collection of Fourier reaction atoms.
\end{proof}

The preceding density argument shows that Fourier reaction atoms are rich enough, in principle, to norm any prescribed finite residual net, provided that the chosen structured reaction dictionary is dense enough to norm the compact residual sets appearing in Lemma~\ref{lem:automatic-finite-reaction-norming}. For use in the convergence theorem, it is useful to isolate the exact finite condition that the reaction dictionary must satisfy. The next proposition records this criterion independently of the particular construction of the atoms.

\begin{proposition}[A finite norming criterion for Fourier reaction atoms]
\label{prop:fourier-reaction-norming}
Let $\delta_N>0$, $\rho_N>0$, and $\varepsilon_N > 0$ such that $\rho_N+2\delta_N\le \varepsilon_N$, and let $\Xi_N = \{\xi_1^N, \ldots, \xi_{m_N}^N \}\subset S_N$ be a finite $\delta_N$-net for $S_N$, i.e.,
\begin{equation}
    S_N\subset
    \bigcup_{a=1}^{m_N}
    \{\xi\in G_D:\|\xi-\xi_a^N\|_{G_D}\le \delta_N\}.
\end{equation}
Then, there exists a finite collection of Fourier stress atoms $\sigma_1^N,\ldots,\sigma_{J_N}^N$ that satisfy the finite reaction-norming condition (\ref{TsTGw9}) with error $\varepsilon_N$. 
\end{proposition}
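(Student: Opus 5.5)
The plan is to combine Lemma~\ref{lem:fourier-reactions-norm-residual-net}, applied to the finite net $\Xi_N$ with error $\rho_N$, with the same $2\delta_N$ triangle-inequality bookkeeping used in the proof of Lemma~\ref{lem:automatic-finite-reaction-norming}. The Fourier construction only has to produce finitely many functionals that norm the net points. Passing from the net to all of $S_N$ is then a purely metric step, and it uses only the unit bound $\|B_a^N\|_{G_D^*}\le 1$.

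First, I would apply Lemma~\ref{lem:fourier-reactions-norm-residual-net} to $\Xi_N$ with the prescribed $\rho_N$. This produces, for each $a=1,\ldots,m_N$, a finite Fourier stress field $\sigma_a^N$ whose induced volumetric reaction functional, normalized as in that proof, is denoted $B_a^N$. These functionals satisfy
\begin{equation*}
    \|B_a^N\|_{G_D^*}\le 1,
    \qquad
    B_a^N[\xi_a^N]\ge \|\xi_a^N\|_{G_D}-\rho_N .
\end{equation*}
If some $\xi_a^N=0$, I take the zero stress atom, which gives $B_a^N=0$. I then set $J_N:=m_N$ and take the Fourier stress atoms $\sigma_1^N,\ldots,\sigma_{J_N}^N$.

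The normalization factor $\max\{1,\|\widetilde B_a^N\|_{G_D^*}\}^{-1}$ is a positive scalar. Since the map $\sigma\mapsto\widetilde B$ is linear, $B_a^N$ is itself induced by the rescaled stress field $\sigma_a^N/\max\{1,\|\widetilde B_a^N\|\}$, which is again a finite Fourier sum. So the normalized functionals remain genuine Fourier reaction atoms of the form \eqref{eq:stress-generated-reaction-atom}, and condition \eqref{rYiunj} holds.

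For \eqref{ayBtJr}, fix $\xi\in S_N$ and pick $a$ with $\|\xi-\xi_a^N\|_{G_D}\le\delta_N$. Using $\|B_a^N\|\le1$ twice,
\begin{equation*}
    B_a^N[\xi]\ge B_a^N[\xi_a^N]-\delta_N
    \ge \|\xi_a^N\|_{G_D}-\rho_N-\delta_N
    \ge \|\xi\|_{G_D}-\rho_N-2\delta_N
    \ge \|\xi\|_{G_D}-\varepsilon_N .
\end{equation*}
Taking the supremum over $j$ then gives \eqref{ayBtJr}.

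The only real obstacle lies upstream, inside Lemma~\ref{lem:fourier-reactions-norm-residual-net}. That lemma has to realize smooth tractions on $\Gamma_D$ by smooth stress fields with no $\Gamma_N$ contribution, and it relies on the assumed trace-realization and Fourier-density facts. Those facts are taken as given here. The one point in the present step that needs care is the remark above that normalization keeps the atoms inside the Fourier class, so the finite norming criterion is met by computable atoms and not merely by abstract elements of $G_D^*$.
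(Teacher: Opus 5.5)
Your proposal is correct and follows the paper's proof essentially verbatim. You invoke Lemma~\ref{lem:fourier-reactions-norm-residual-net} on the net $\Xi_N$ with error $\rho_N$, then pass from the net to all of $S_N$ by the $2\delta_N$ triangle-inequality step, which uses only $\|B_j^N\|_{G_D^*}\le 1$. Your remark that the normalization is just a rescaling of the stress field, so the atoms stay in the Fourier class, is a welcome detail the paper leaves implicit (compare its remark on scaling by $-\lambda$). One small correction: the last inequality in your chain is the triangle inequality in $G_D$, not a second use of $\|B_a^N\|_{G_D^*}\le 1$.
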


\begin{proof}
By Lemma~\ref{lem:fourier-reactions-norm-residual-net}, there exists a finite family of Fourier reaction atoms $B_1^N,\ldots,B_{J_N}^N$ of the form (\ref{eq:stress-generated-reaction-atom}) such that
\begin{subequations}
\begin{align}
    &
    \|B_j^N\|_{G_D^*}\le 1,
    \quad 1\le j\le J_N,
    \\ &
    \max_{1\le j\le J_N} B_j^N[\xi_a^N]
    \ge
    \|\xi_a^N\|_{G_D}-\rho_N,
    \quad a=1,\ldots,m_N .
\end{align}
\end{subequations}
Let $\xi\in S_N$. Since $\Xi_N$ is a $\delta_N$-net for $S_N$, there exists $a\in\{1,\ldots,m_N\}$ such that
\begin{equation}
    \|\xi-\xi_a^N\|_{G_D}\le \delta_N .
\end{equation}
Choose $j$ such that
\begin{equation}
    B_j^N[\xi_a^N]\ge \|\xi_a^N\|_{G_D}-\rho_N .
\end{equation}
Using $\|B_j^N\|_{G_D^*}\le 1$, we obtain
\begin{equation}
\begin{split}
    &
    B_j^N[\xi]
    =
    B_j^N[\xi_a^N]+B_j^N[\xi-\xi_a^N]
    \ge
    \|\xi_a^N\|_{G_D}-\rho_N-\|\xi-\xi_a^N\|_{G_D}
    \ge \\ &
    \|\xi\|_{G_D}
    -2\|\xi-\xi_a^N\|_{G_D}
    -\rho_N
    \ge
    \|\xi\|_{G_D}-(\rho_N+2\delta_N).
\end{split}
\end{equation}
Taking the supremum over $j$ proves
\begin{equation}
    \sup_{1\le j\le J_N} B_j^N[\xi]
    \ge
    \|\xi\|_{G_D}-(\rho_N+2\delta_N),
    \quad \xi\in S_N .
\end{equation}
If $\rho_N+2\delta_N\le\varepsilon_N$, this is precisely the norming
condition (\ref{TsTGw9}). 
\end{proof}

We note that, scaling a Fourier stress atom $\sigma_j^N$ by $-\lambda$ scales $b_j^N=-\operatorname{div}\sigma_j^N$ and the corresponding reaction functional by the same factor, so the scaled atoms remain of the form (\ref{eq:stress-generated-reaction-atom}).

\begin{corollary}[Convergence of the Fourier manufactured saddle scheme]
\label{cor:fourier-saddle-convergence}
Assume (A1)--(A9). Assume, in addition, that the Fourier-generated reaction atoms norm the
compact residual sets as in Lemma\ref{lem:fourier-reactions-norm-residual-net}. Then, for every compact set $K \subset Z$, Fourier displacement atoms and Fourier-generated reaction atoms can be chosen such that the manufactured saddle scheme converges uniformly to $\mathcal G$ on $K$.
\end{corollary}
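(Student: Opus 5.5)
The plan is to assemble the corollary from three earlier results: Proposition~\ref{prop:fourier-displacement-density}, Proposition~\ref{prop:fourier-reaction-norming}, and Theorem~\ref{thm:uniform-convergence}. The key observation is that the proof of Theorem~\ref{thm:uniform-convergence} never uses the Hahn--Banach origin of the reaction atoms. It uses only two properties of the atoms $B_j^N$: the bound \eqref{rYiunj} and the norming inequality \eqref{ayBtJr} on the residual set $S_N$. So any finite family with these two properties may replace the abstract atoms of Lemma~\ref{lem:automatic-finite-reaction-norming}.

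\textbf{Step 1: displacement atoms.} Fix a compact $K\subset Z$. By Proposition~\ref{prop:fourier-displacement-density}, choose finite Fourier dictionaries $U_N$ with $\alpha_N=\operatorname{dist}_X(U_K,U_N)\to0$. The theorem also requires the convex hulls $A_N$ to be uniformly bounded. In the construction of that proposition, each atom $v^r$ lies within $\varepsilon$ of an element of $U_K$, and $U_K$ is bounded by Lemma~\ref{lem:strong-continuity-compactness}. Hence every atom satisfies $\|v^r\|_X\le \sup_{U_K}\|u\|_X+1$, and $A_N$ inherits this bound because norms are convex. This fixes $M$, then $L_{K,M}$, and then $\lambda$ as in \eqref{10Osu4}. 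None of these constants depend on $N$.

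\textbf{Step 2: reaction atoms.} Take $\varepsilon_N\downarrow0$ and set $\delta_N=\rho_N=\varepsilon_N/3$. The proof of Lemma~\ref{lem:automatic-finite-reaction-norming} shows that $S_N$ is compact, so it has a finite $\delta_N$-net $\Xi_N\subset S_N$. Applying Proposition~\ref{prop:fourier-reaction-norming}, which invokes the standing assumption through Lemma~\ref{lem:fourier-reactions-norm-residual-net}, gives Fourier-stress-generated functionals $B_j^N$ of the form \eqref{eq:stress-generated-reaction-atom} satisfying \eqref{TsTGw9} with error $\varepsilon_N$. Then set $R_j^N=-\lambda B_j^N$. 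By the remark following Proposition~\ref{prop:fourier-reaction-norming}, this scaling keeps the atoms inside the Fourier-generated class, corresponding to the stress fields $-\lambda\sigma_j^N$.

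\textbf{Step 3: convergence.} Repeat the proof of Theorem~\ref{thm:uniform-convergence} with these atoms. Lemma~\ref{v82cve} uses only the finiteness of the atom set, so it still applies. The upper bound uses only \eqref{TsTGw9}, and the lower bound uses only $\|R\|_{G_D^*}\le\lambda$ on $C_N$. This yields
\[
-\bigl(L_{K,M}+\lambda\|T_D\|\bigr)\alpha_N\le \mathcal G_N-\mathcal G\le \lambda\varepsilon_N
\]
uniformly on $K$, which proves uniform convergence.

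\textbf{Main obstacle.} The delicate point is the order of quantifiers. The residual set $S_N$ depends on the displacement atoms $U_N$, while $\lambda$ must be chosen before $N$. The plan therefore fixes $M$ and $\lambda$ first, using only the uniform bound from Step 1, and then chooses the reaction atoms afterwards, separately for each $N$. Consequently the only hypothesis that must be checked with care is the uniform boundedness of $A_N$.
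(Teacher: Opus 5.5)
Your proposal is correct and follows the same route as the paper. Proposition~\ref{prop:fourier-displacement-density} supplies displacement density, Proposition~\ref{prop:fourier-reaction-norming} supplies Fourier-generated reaction atoms satisfying \eqref{TsTGw9}, and Theorem~\ref{thm:uniform-convergence} then yields the two one-sided bounds. The paper's proof leaves implicit the theorem's hypothesis that the hulls $A_N$ are uniformly bounded. Your explicit check of it, which lets you fix $M$, $L_{K,M}$ and $\lambda$ before choosing the reaction atoms for each $N$, is a worthwhile addition.
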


\begin{proof}
Proposition~\ref{prop:fourier-displacement-density} supplies the displacement-density hypothesis of Theorem~\ref{thm:uniform-convergence}. Proposition ~\ref{prop:fourier-reaction-norming} supplies the finite reaction-norming condition (\ref{TsTGw9}), with $\varepsilon_N\to0$, using reaction atoms of the volumetric form (\ref{eq:stress-generated-reaction-atom}). Therefore, Theorem ~\ref{thm:uniform-convergence} gives the one-sided bounds
\begin{equation}
    \sup_{z\in K}\bigl(\mathcal G_N(z)-\mathcal G(z)\bigr)\le \lambda\varepsilon_N
    \to  0
\end{equation}
and
\begin{equation}
    \sup_{z\in K}\bigl(\mathcal G(z)-\mathcal G_N(z)\bigr)
    \le
    \bigl(L_{K,M}+\lambda\|T_D\|_{\mathcal L(X,G_D)}\bigr)
    \alpha_N
    \to  0 .
\end{equation}
Combining the two estimates proves uniform convergence of $\mathcal G_N$ to $\mathcal G$ on $K$.
\end{proof}

\subsection{Cell-center quadrature}
We now make explicit the numerical quadrature used in Fig.~\ref{fig:Schema}. Let $Q$ be as in (\ref{IFLb2D}) and let $\mathcal Q_h$ be the uniform Cartesian partition of $Q$ into rectangular cells $K$, with cell centers $x_K$ and cell volumes $|K|$. For an integrand $\varphi$ defined on $\Omega$, extended by zero to $Q$, we use the midpoint-cell quadrature
\begin{equation}\label{eq:quadrature-rule}
    Q_h[\varphi]
    := \sum_{K\in\mathcal Q_h} |K|\,
    \mathbf 1_{\Omega}(x_K)\,\varphi(x_K).
\end{equation}
Thus, the only sampling points are the centers of the cells in the background cube $Q$, exactly as indicated in Fig.~\ref{fig:Schema}. Boundary cells are classified by whether their center lies in $\Omega$. Since $\Omega$ is Lipschitz, $|\partial\Omega|=0$, and therefore this center-point rule is consistent for every bounded Riemann-integrable integrand on $\Omega$, equivalently for every bounded function whose zero extension to $Q$ is continuous except on a set of Lebesgue measure zero.

For a manufactured displacement atom $u_i$ generated by the finite Fourier expansion (\ref{eq:fourier-displacement-atom}), define
\begin{equation}\label{eq:quadrature-atom-energy}
    \mathcal E_h(u_i)
    := Q_h\left[\,W(\cdot,e(u_i))\,\right].
\end{equation}
For a manufactured stress atom $\sigma_j$ generated by (\ref{eq:fourier-stress-atom}), set $b_j=-\operatorname{div}\sigma_j$. The reaction functional (\ref{eq:stress-generated-reaction-atom}) is then approximated by the purely volumetric formula
\begin{equation}\label{eq:quadrature-reaction-functional}
    R_{j,h}[\eta]
    :=
    Q_h\left[
    b_j\cdot \Lift\eta
    -
    \sigma_j : e(\Lift\eta)
    \right],
    \quad \eta\in G_D .
\end{equation}
In particular,
\begin{equation}\label{eq:quadrature-coupling-coefficients}
    C_{ij,h}
    :=
    -R_{j,h}[T_Du_i]
    =
    -Q_h\left[
    b_j\cdot \Lift(T_Du_i)
    -
    \sigma_j : e(\Lift(T_Du_i))
    \right].
\end{equation}
No independent boundary quadrature is required: the Dirichlet-reaction pairing is represented through the volume identity defining $R_j$. The quadrature payoff is
\begin{equation}\label{eq:quadrature-payoff}
    \Phi_{ij,h}(z)
    :=
    \langle f,u_i\rangle_h
    +
    R_{j,h}[g-T_Du_i]
    -
    \mathcal E_h(u_i).
\end{equation}
When the external load contains a classical Neumann part, the quadrature symbol $\langle f,u_i\rangle_h$ is understood to include the chosen quadrature or ambient-dual representation of that load. Thus, if
\begin{equation}
    f(\xi)=\int_\Omega b\cdot \xi\,dx+\langle h,\operatorname{Tr}\xi\rangle_{\Gamma_N},
\end{equation}
then
\begin{equation}
    \langle f,u_i\rangle_h
    :=
    Q_h[b\cdot u_i]+
    \langle h,\operatorname{Tr}u_i\rangle_{\Gamma_N,h},
\end{equation}
where the second term is either a boundary quadrature on $\Gamma_N$ or an equivalent midpoint-consistent ambient-dual representation of the Neumann load. The statement that no independent boundary quadrature is required applies to the Dirichlet-reaction coupling when reaction atoms are represented volumetrically; it does not preclude the quadrature used to represent prescribed external Neumann loads.
The corresponding cell-center quadrature approximation of the saddle game
(\ref{eq:saddle-minmax-finite}) is
\begin{equation}\label{eq:quadrature-saddle-game}
    \mathcal G_{IJ,h}(z)
    :=
    \max_{p\in\Delta_I}\min_{q\in\Delta_J}
    \sum_{i=1}^I\sum_{j=1}^J p_iq_j\Phi_{ij,h}(z).
\end{equation}

We next record the elementary stability fact that the cell-center quadrature does not spoil the uniform convergence of the finite saddle approximations. The point is that a finite zero-sum matrix game is Lipschitz, with constant one, with respect to the entrywise maximum norm. 

\begin{lemma}[Uniform payoff consistency of the cell-center rule]
\label{9rJ4Qg}
Let $K\subset Z$ be compact and let $u_1,\ldots,u_I$ and $R_1,\ldots,R_J$ be fixed finite atom families generated by the Fourier constructions (\ref{eq:fourier-displacement-atom}) and (\ref{eq:stress-generated-reaction-atom}). Assume that the scalar integrands entering the three terms of the payoff are uniformly consistent for the cell-center rule on the compact data class $K$, namely,
\begin{subequations}
\begin{align}
\max_{1\le i\le I}| \mathcal E_h(u_i)-\mathcal E(u_i)| &\to 0,\\
\sup_{z=(f,g)\in K}\max_{1\le i\le I}|\langle f,u_i\rangle_h-\langle f,u_i\rangle_{X^*,X}| &\to 0,\\
\sup_{z=(f,g)\in K}\max_{\substack{1\le i\le I\\1\le j\le J}}|R_{j,h}[g-T_Du_i]-R_j[g-T_Du_i]| &\to 0,
\end{align}
\end{subequations}
as $h\downarrow0$. Then, the cell-center quadrature payoffs (\ref{eq:quadrature-payoff}) satisfy the uniform consistency condition
\begin{equation} \label{n0GU3T}
    \eta_h :=
    \sup_{z\in K}
    \max_{1\le i\le I,\;1\le j\le J}
    \bigl|\Phi_{ij,h}(z)-\Phi_{ij}(z)\bigr|
    \to 0 ,
    \quad\text{as } h\downarrow 0 .
\end{equation}
\end{lemma}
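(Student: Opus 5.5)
The plan is a direct term-by-term comparison using the triangle inequality. The three assumed consistency conditions are stated in exactly the form needed. For fixed $z=(f,g)\in K$ and indices $i,j$, subtract the exact payoff $\Phi_{ij}(z)$ from the quadrature payoff \eqref{eq:quadrature-payoff}. This gives the decomposition
\begin{equation*}
    \Phi_{ij,h}(z)-\Phi_{ij}(z)
    =
    \bigl(\langle f,u_i\rangle_h-\langle f,u_i\rangle_{X^*,X}\bigr)
    +
    \bigl(R_{j,h}[g-T_Du_i]-R_j[g-T_Du_i]\bigr)
    -
    \bigl(\mathcal E_h(u_i)-\mathcal E(u_i)\bigr).
\end{equation*}
The key point is that the quadrature rule enters each of the three terms of the payoff separately and linearly. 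No cross terms arise, because the payoff is a plain sum of a load pairing, a reaction pairing, and an energy intercept.

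Next, take absolute values and apply the triangle inequality. Then take the maximum over the finitely many indices $1\le i\le I$, $1\le j\le J$ and the supremum over $z\in K$. This yields
\begin{equation*}
    \eta_h
    \le
    \sup_{z\in K}\max_{i}\bigl|\langle f,u_i\rangle_h-\langle f,u_i\rangle_{X^*,X}\bigr|
    +
    \sup_{z\in K}\max_{i,j}\bigl|R_{j,h}[g-T_Du_i]-R_j[g-T_Du_i]\bigr|
    +
    \max_i\bigl|\mathcal E_h(u_i)-\mathcal E(u_i)\bigr|.
\end{equation*}
Here we use that the supremum of a sum is bounded by the sum of the suprema, and that the energy term does not depend on $z$. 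Each of the three terms on the right tends to zero as $h\downarrow0$ by hypothesis, so $\eta_h\to0$, which is \eqref{n0GU3T}.

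There is essentially no obstacle here, since the uniform consistency of each ingredient is assumed rather than proved. The only point needing care is bookkeeping. Every supremum must be taken over the same compact data class $K$ and the same finite atom families, so that the three hypotheses combine into a single uniform bound.

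For completeness, one could also remark why the hypotheses are natural for the Fourier construction, although this is not required for the lemma. The Fourier atoms are smooth on $\overline\Omega$, and $|\partial\Omega|=0$ by (A1). Hence the integrands $W(\cdot,e(u_i))$ (under a continuity assumption on $W$ in $x$), $b\cdot u_i$, and $b_j\cdot\Lift\eta-\sigma_j:e(\Lift\eta)$ are Riemann integrable, so the midpoint rule converges. Uniformity over $z\in K$ would follow from compactness of $K$ together with the equicontinuity of the affine dependence on $z$. We would not enter into that argument and would simply invoke the stated assumptions.
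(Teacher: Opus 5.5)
Your proposal is correct and follows essentially the same route as the paper: decompose $\Phi_{ij,h}(z)-\Phi_{ij}(z)$ into the load, reaction, and energy differences, apply the triangle inequality, then take the supremum over $z\in K$ and the maximum over the finite index sets so that the three assumed consistency conditions give $\eta_h\to0$. Your closing remark on why the hypotheses are natural for Fourier atoms is optional; the lemma, as stated, needs only the assumed consistency conditions.
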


\begin{proof}
For every $z=(f,g)\in K$ and every $i,j$,
\begin{equation}
\begin{split}
    &
    |\Phi_{ij,h}(z)-\Phi_{ij}(z)|
    \le
    |\langle f,u_i\rangle_h-\langle f,u_i\rangle_{X^*,X}| 
    + \\ &
    |R_{j,h}[g-T_Du_i]-R_j[g-T_Du_i]|
    +
    |\mathcal E_h(u_i)-\mathcal E(u_i)| .
\end{split}
\end{equation}
Taking the supremum over $z\in K$ and the maximum over the finite atom families gives (\ref{n0GU3T}) from the three assumed uniform consistency conditions.
\end{proof}

It bears emphasis that the preceding quadrature result is conditional on uniform payoff consistency for the represented atom and data classes; it is not a pointwise quadrature claim for arbitrary Sobolev representatives. We also note that the uniform consistency assumptions in Lemma~\ref{9rJ4Qg} are quadrature assumptions on the particular finite payoff family, not additional well-posedness assumptions on the continuum problem. They hold, for example, when the loads in the compact data class are represented by a uniformly midpoint-consistent family of bounded Riemann-integrable volume densities on $Q$, when the Dirichlet data are lifted by a uniformly midpoint-consistent smooth or piecewise-smooth family, and when the displacement and stress atoms are finite Fourier sums.

\begin{proposition}[Convergence with respect to quadrature]
\label{prop:cell-center-quadrature-uniform}
Let $K\subset Z$ be compact. For fixed finite atom families $u_1,\ldots,u_I\in X$, $R_1,\ldots,R_J\in G_D^*$, let $\mathcal G_{IJ}$ be the exact finite saddle value defined by (\ref{eq:saddle-minmax-finite}), and let $\mathcal G_{IJ,h}$ be the cell-center quadrature value defined by (\ref{eq:quadrature-saddle-game}). Assume that the quadrature payoffs are uniformly consistent on $K$ as in (\ref{n0GU3T}). Then,
\begin{equation}
    \sup_{z\in K}
    \bigl|
        \mathcal G_{IJ,h}(z)-\mathcal G_{IJ}(z)
    \bigr|
    \le \eta_h
    \to  0 .
\end{equation}
\end{proposition}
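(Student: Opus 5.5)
The plan is to prove the elementary fact that the value of a finite zero-sum matrix game is $1$-Lipschitz with respect to the entrywise maximum norm of the payoff matrix. The claimed bound then follows directly from the uniform consistency condition \eqref{n0GU3T}.

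Fix $z\in K$ and write $\Phi=(\Phi_{ij}(z))$ and $\Phi^h=(\Phi_{ij,h}(z))$ for the two $I\times J$ payoff matrices. For each $h$, set
\begin{equation*}
    \eta_h(z):=\max_{i,j}\bigl|\Phi_{ij,h}(z)-\Phi_{ij}(z)\bigr|\le\eta_h .
\end{equation*}

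\textbf{Bilinear payoffs.} For any mixed strategies $p\in\Delta_I$ and $q\in\Delta_J$, the weights $p_iq_j$ are nonnegative and sum to one. Hence
\begin{equation*}
    \Bigl|\sum_{i,j}p_iq_j\Phi_{ij,h}(z)-\sum_{i,j}p_iq_j\Phi_{ij}(z)\Bigr|
    \le\sum_{i,j}p_iq_j\,\eta_h(z)=\eta_h(z).
\end{equation*}
So the two bilinear payoff functions on $\Delta_I\times\Delta_J$ differ by at most $\eta_h(z)$ in the sup norm.

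\textbf{Passing through the max--min.} For fixed $p$, taking the minimum over $q$ of both sides of $p^T\Phi^h q\le p^T\Phi q+\eta_h(z)$ gives
\begin{equation*}
    \min_q p^T\Phi^h q\le\min_q p^T\Phi q+\eta_h(z).
\end{equation*}
Taking the maximum over $p$ then yields $\mathcal G_{IJ,h}(z)\le\mathcal G_{IJ}(z)+\eta_h(z)$. Exchanging the roles of $\Phi$ and $\Phi^h$ gives the reverse inequality. Both maxima and minima are attained, because the simplices are compact and the payoffs are continuous. No minimax theorem is needed, since both values are defined by the same max--min order. Alternatively, one may invoke Proposition~\ref{prop:saddle-minmax-shape}.

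\textbf{Conclusion.} The previous step gives $|\mathcal G_{IJ,h}(z)-\mathcal G_{IJ}(z)|\le\eta_h(z)\le\eta_h$. Taking the supremum over $z\in K$ gives the stated bound, and $\eta_h\to0$ by \eqref{n0GU3T} (which Lemma~\ref{9rJ4Qg} supplies under its consistency hypotheses).

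There is no genuine obstacle in this argument. The only point needing care is to keep the bound independent of $z$, so that it is uniform on $K$. This holds because the Lipschitz constant of the game value is exactly one, regardless of the dimensions $I$ and $J$ and of the atoms.
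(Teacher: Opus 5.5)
Your proposal is correct and follows essentially the same route as the paper: bound the bilinear payoff difference by the entrywise maximum $\max_{i,j}|\Phi_{ij,h}(z)-\Phi_{ij}(z)|$, take the minimum over $q$ and then the maximum over $p$, exchange the roles of the two matrices, and take the supremum over $z\in K$. Your added observations, that the extrema are attained and that no minimax theorem is needed because both values use the same max--min order, are accurate and make the argument slightly more explicit than the paper's version.
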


\begin{proof}
Fix $z\in K$, and write
\begin{equation}
    A(z) := \bigl(\Phi_{ij}(z)\bigr)_{i,j},
    \quad
    A_h(z) := \bigl(\Phi_{ij,h}(z)\bigr)_{i,j}.
\end{equation}
The corresponding matrix-game values are
\begin{equation}
    \mathcal G_{IJ}(z)
    =
    \max_{p\in\Delta_I}\min_{q\in\Delta_J}p^T A(z)q,
    \quad
    \mathcal G_{IJ,h}(z)
    =
    \max_{p\in\Delta_I}\min_{q\in\Delta_J}p^T A_h(z)q .
\end{equation}
For every $p\in\Delta_I$ and $q\in\Delta_J$,
\begin{equation}
    \bigl|p^T(A_h(z)-A(z))q\bigr|
    \le
    \max_{i,j}
    \bigl|
        \Phi_{ij,h}(z)-\Phi_{ij}(z)
    \bigr|.
\end{equation}
Taking first the minimum over $q$, then the maximum over $p$, and
reversing the roles of $A_h(z)$ and $A(z)$, gives
\begin{equation}
    |\mathcal G_{IJ,h}(z)-\mathcal G_{IJ}(z)|
    \le
    \max_{i,j}
    \bigl|
        \Phi_{ij,h}(z)-\Phi_{ij}(z)
    \bigr|.
\end{equation}
Taking the supremum over $z\in K$ proves the first assertion.
\end{proof}

\begin{corollary}[Cell-center quadrature does not spoil uniform convergence]
Let $\mathcal G_N$ be any sequence of exact finite saddle approximations satisfying
\begin{equation}
    \sup_{z\in K}|\mathcal G_N(z)-\mathcal G(z)|\to  0 .
\end{equation}
If $h_N\downarrow0$ is chosen so that
\begin{equation}
    \eta_{N,h_N}
    :=
    \sup_{z\in K}
    \max_{1\le i\le I_N,\;1\le j\le J_N}
    \bigl|
        \Phi^N_{ij,h_N}(z)-\Phi^N_{ij}(z)
    \bigr|
    \to 0 ,
\end{equation}
then, the fully computable quadrature saddle values satisfy
\begin{equation}
    \sup_{z\in K}|\mathcal G_{N,h_N}(z)-\mathcal G(z)|\to 0 .
\end{equation}
Thus, under uniform payoff consistency, the cell-center quadrature does not spoil the uniform convergence of the manufactured saddle minmax scheme on compact data classes.
\end{corollary}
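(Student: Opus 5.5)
The argument is a triangle inequality that splits the error into a quadrature part and an atom-approximation part. For each $N$ and $z\in K$ I will write
\begin{equation*}
    |\mathcal G_{N,h_N}(z)-\mathcal G(z)|
    \le
    |\mathcal G_{N,h_N}(z)-\mathcal G_N(z)|
    +
    |\mathcal G_N(z)-\mathcal G(z)|,
\end{equation*}
where $\mathcal G_{N,h_N}$ denotes the quadrature game (\ref{eq:quadrature-saddle-game}) built from the $N$-th atom families with payoffs $\Phi^N_{ij,h_N}$. The second term is controlled uniformly on $K$ directly by hypothesis. For instance, this hypothesis is supplied by Theorem~\ref{thm:uniform-convergence} or by Corollary~\ref{cor:fourier-saddle-convergence}.

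For the first term I will invoke Proposition~\ref{prop:cell-center-quadrature-uniform} for each fixed $N$, taking the finite atom families $u^N_1,\ldots,u^N_{I_N}$ and $R^N_1,\ldots,R^N_{J_N}$ and the quadrature parameter $h=h_N$. The substance of that proposition is pointwise in $h$. Its proof bounds $|\mathcal G_{IJ,h}(z)-\mathcal G_{IJ}(z)|$ by the entrywise maximum $\max_{i,j}|\Phi_{ij,h}(z)-\Phi_{ij}(z)|$, using the $1$-Lipschitz property of matrix-game values in the sup-norm of the payoff matrix. This bound holds for every $h$ and every fixed atom families, and not only in the limit $h\downarrow0$. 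Taking the supremum over $z\in K$ therefore gives
\begin{equation*}
    \sup_{z\in K}|\mathcal G_{N,h_N}(z)-\mathcal G_N(z)|\le \eta_{N,h_N}.
\end{equation*}

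Combining the two estimates yields
\begin{equation*}
    \sup_{z\in K}|\mathcal G_{N,h_N}(z)-\mathcal G(z)|
    \le
    \eta_{N,h_N}+\sup_{z\in K}|\mathcal G_N(z)-\mathcal G(z)|\to0 .
\end{equation*}

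The only point needing care is this: Proposition~\ref{prop:cell-center-quadrature-uniform} is stated with atom families fixed and $h\downarrow0$, whereas here the families change with $N$. I would handle it by re-running its short proof, the matrix-game Lipschitz estimate, at the single value $h=h_N$, rather than citing its limit statement. No uniformity in $N$ of the quadrature error is required beyond the assumed $\eta_{N,h_N}\to0$. Such a diagonal choice of $h_N$ exists whenever Lemma~\ref{9rJ4Qg} applies for each fixed $N$, but the corollary takes it as given.
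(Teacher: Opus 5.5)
Your proposal is correct and matches the paper's proof: the paper also applies the matrix-game estimate $|\mathcal G_{IJ,h}(z)-\mathcal G_{IJ}(z)|\le\max_{i,j}|\Phi_{ij,h}(z)-\Phi_{ij}(z)|$ from Proposition~\ref{prop:cell-center-quadrature-uniform} to the $N$-th atom family at $h=h_N$, and then uses the triangle inequality. Your observation that this estimate holds for each fixed $h$, not only in the limit $h\downarrow 0$, is what justifies citing the proposition when the atom families vary with $N$.
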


\begin{proof}
Apply the first estimate to the $N$-th atom family and use the triangle inequality:
\begin{equation}
\begin{split}
    &
    \sup_{z\in K}|\mathcal G_{N,h_N}(z)-\mathcal G(z)|
    \le \\ &
    \sup_{z\in K}|\mathcal G_{N,h_N}(z)-\mathcal G_N(z)|
    +
    \sup_{z\in K}|\mathcal G_N(z)-\mathcal G(z)|
    \le \\ &
    \eta_{N,h_N}
    +
    \sup_{z\in K}|\mathcal G_N(z)-\mathcal G(z)|.
\end{split}
\end{equation}
Both terms on the right-hand side tend to zero by assumption. This proves the fully quadrature-based uniform convergence.
\end{proof}

\subsection{Learning algorithm}\label{subsec:learning-algorithm}

\begin{algorithm}[h!]
\caption{Cell-center construction of a finite saddle minmax game}
\begin{algorithmic}[1]
\Require Number of displacement atoms $I$, number of reaction atoms $J$,
cell-center quadrature rule $Q_h$, trace lifting $\Lift:G_D\to X$, samplers for
displacement atoms and reaction atoms
\Ensure Cell-center saddle approximation $\mathcal G_{IJ,h}$
\State Generate displacement atoms $u_1,\ldots,u_I\in X$
\For{$i=1,\ldots,I$}
    \State Compute $\mathcal E_h(u_i)$ using \eqref{eq:quadrature-atom-energy}
\EndFor
\State Generate quadrature reaction functionals
$R_{1,h},\ldots,R_{J,h}$ using \eqref{eq:quadrature-reaction-functional}
or another chosen reaction-atom evaluator
\For{$i=1,\ldots,I$ and $j=1,\ldots,J$}
    \State Compute $C_{ij,h}$ using
    \eqref{eq:quadrature-coupling-coefficients}
    \State Assemble $\Phi_{ij,h}$ using \eqref{eq:quadrature-payoff}
\EndFor
\State Return the saddle value $\mathcal G_{IJ,h}$ defined by
\eqref{eq:quadrature-saddle-game}
\end{algorithmic}
\end{algorithm}

The computable learning procedure consists of selecting displacement atoms and reaction atoms, assembling the quadrature coefficients defined in \eqref{eq:quadrature-atom-energy}--\eqref{eq:quadrature-coupling-coefficients}, and evaluating the cell-center saddle game \eqref{eq:quadrature-saddle-game}. Thus, the algorithm does not introduce independent intercepts or coupling coefficients: the stored-energy terms are those of \eqref{eq:quadrature-atom-energy}, the reaction functionals are those of \eqref{eq:quadrature-reaction-functional}, and the coupling matrix is fixed by \eqref{eq:quadrature-coupling-coefficients}. The payoff entries are then the quadrature payoffs \eqref{eq:quadrature-payoff}.

This is the mixed-boundary analogue of selecting full-Neumann minmax slopes and maxout networks from manufactured equilibria in the full-Neumann endpoint. The additional ingredient is the reaction atom sampler, which supplies the concave Dirichlet-side slopes. Once a saddle strategy has been computed for \eqref{eq:quadrature-saddle-game}, the corresponding active displacement and reaction readouts are read from \eqref{eq:saddle-minmax-active-sensitivities}, with $R_j$ replaced by $R_{j,h}$.

It remains to specify how the finite-dimensional game \eqref{eq:quadrature-saddle-game} is evaluated. For each datum $z=(f,g)$, set
\begin{equation}
    A_h(z):=\big(A_{ij,h}(z)\big)_{i=1,\ldots,I;\,j=1,\ldots,J},
    \quad
    A_{ij,h}(z):=\Phi_{ij,h}(z).
\end{equation}
Then,
\begin{equation}
    \mathcal G_{IJ,h}(z)=\max_{p\in\Delta_I}\min_{q\in\Delta_J}
    p^T A_h(z) q .
\end{equation}
Thus, evaluation of \eqref{eq:quadrature-saddle-game} is exactly the solution of a finite zero-sum matrix game. Since both strategy sets are simplices, the value and an optimal pair of mixed strategies may be obtained from the primal--dual pair of linear programs
\begin{subequations}\label{eq:quadrature-game-lps}
\begin{align}
    \mathcal G_{IJ,h}(z)
    &=\max_{p,\vartheta}\ \vartheta ,
    &&\text{subject to:}\;\,
    \sum_{i=1}^I p_i A_{ij,h}(z)\ge \vartheta,
    \;\, j=1,\ldots,J, \nonumber\\
    &&& \sum_{i=1}^I p_i=1,\;\, p_i\ge0,
    \;\, i=1,\ldots,I, \label{eq:quadrature-game-primal-lp}
    \\
    \mathcal G_{IJ,h}(z)
    &=\min_{q,\vartheta}\ \vartheta ,
    &&\text{subject to:}\;\,
    \sum_{j=1}^J A_{ij,h}(z)q_j\le \vartheta,
    \;\, i=1,\ldots,I, \nonumber\\
    &&& \sum_{j=1}^J q_j=1,\;\, q_j\ge0,
    \;\, j=1,\ldots,J. \label{eq:quadrature-game-dual-lp}
\end{align}
\end{subequations}
The linear-programming formulation is the most direct method for small and moderate atom dictionaries, and standard simplex or interior-point methods may be used \cite{vonNeumannMorgenstern1944, Dantzig1963, Chvatal1983}. It also gives a useful certificate of accuracy. For arbitrary $p\in\Delta_I$ and $q\in\Delta_J$,
\begin{equation}
    \min_{1\le j\le J}(A_h(z)^T p)_j
    \le \mathcal G_{IJ,h}(z)
    \le \max_{1\le i\le I}(A_h(z)q)_i ,
\end{equation}
and hence the duality gap
\begin{equation}
    \operatorname{gap}(p,q;z)
    := \max_{1\le i\le I}(A_h(z)q)_i
    -  \min_{1\le j\le J}(A_h(z)^T p)_j
\end{equation}
provides a computable stopping criterion. At an exact saddle strategy $(p,q)$, the complementary slackness conditions are
\begin{subequations}
\begin{align}
    &
    p_i>0 \;\, \Rightarrow \;\, (A_h(z)q)_i=\mathcal G_{IJ,h}(z), \quad 1\le i\le I,
    \\ &
    q_j>0 \;\, \Rightarrow \;\, (A_h(z)^Tp)_j=\mathcal G_{IJ,h}(z), \quad 1\le j\le J,
\end{align}
\end{subequations}
together with the inequalities
\begin{equation}
    (A_h(z)q)_i\le \mathcal G_{IJ,h}(z)\le (A_h(z)^Tp)_j ,
    \quad
    1\le i\le I, \;\, 1\le j\le J .
\end{equation}
Consequently, once the active supports are known, the game may also be solved by support enumeration or active-set equalization, followed by these inequality checks \cite{vonStengel2002}.

For large dictionaries, all entries of $A_h(z)$ need not be formed explicitly, provided matrix-vector products can be evaluated. If
\begin{equation}
    \bar u_p:=\sum_{i=1}^I p_i u_i ,
    \quad
    \bar R_{q,h}:=\sum_{j=1}^J q_j R_{j,h},
\end{equation}
then,
\begin{subequations}
\begin{align}
    (A_h(z)q)_i
    &=
    \langle f,u_i\rangle_h-\mathcal E_h(u_i)
    +\bar R_{q,h}[g-T_Du_i],
    \;\,
    1\le i\le I
    \label{eq:matrix-game-row-product}
    \\
    (A_h(z)^Tp)_j
    &=
    \langle f,\bar u_p\rangle_h
    -\sum_{i=1}^I p_i\mathcal E_h(u_i)
    +R_{j,h}[g-T_D\bar u_p],
    \;\,
    1\le j\le J.
    \label{eq:matrix-game-column-product}
\end{align}
\end{subequations}
These formulae support first-order saddle solvers, for instance entropic mirror descent or multiplicative-weights iterations,
\begin{subequations}
\begin{align}
    p_i^{n+1}
    &=
    \frac{p_i^n\exp\{\eta_n(A_h(z)q^n)_i\}}
    {\sum_{k=1}^I p_k^n\exp\{\eta_n(A_h(z)q^n)_k\}},
    \quad
    1\le i\le I
    \\
    q_j^{n+1}
    &=
    \frac{q_j^n\exp\{-\eta_n(A_h(z)^Tp^n)_j\}}
    {\sum_{k=1}^J q_k^n\exp\{-\eta_n(A_h(z)^Tp^n)_k\}} 
    \quad
    1\le j\le J,
\end{align}
\end{subequations}
with the ergodic averages tested by the duality gap above. Such matrix-free methods are useful when the atom sets are large or when the same atom dictionary is queried for many data $z$; higher-order or extra gradient variants may be used when a sharper first-order saddle residual is desired \cite{FreundSchapire1999, Nemirovski2004}. In all cases the active mechanical readouts are then recovered from the optimal, or approximately optimal, strategies by \eqref{eq:saddle-minmax-active-sensitivities}, with $R_j$ replaced by $R_{j,h}$.

\section{Illustrative numerical example}
\label{sec:dam-example}

We conclude with a small-strain Hookean example that illustrates the finite saddle minmax data-to-solution architecture in a simple setting. The purpose is not to present a finely converged engineering analysis, but to show how the value-map construction can be queried after the displacement and reaction dictionaries have been assembled. In contrast with a conventional solver, the primary object is to \emph{learn} the scalar value functional; the displacement and base reaction are then recovered as subgradient and supergradient readouts with respect to the load and Dirichlet data.

\begin{figure}[h!]
  \centering
  \includegraphics[width=0.72\textwidth]{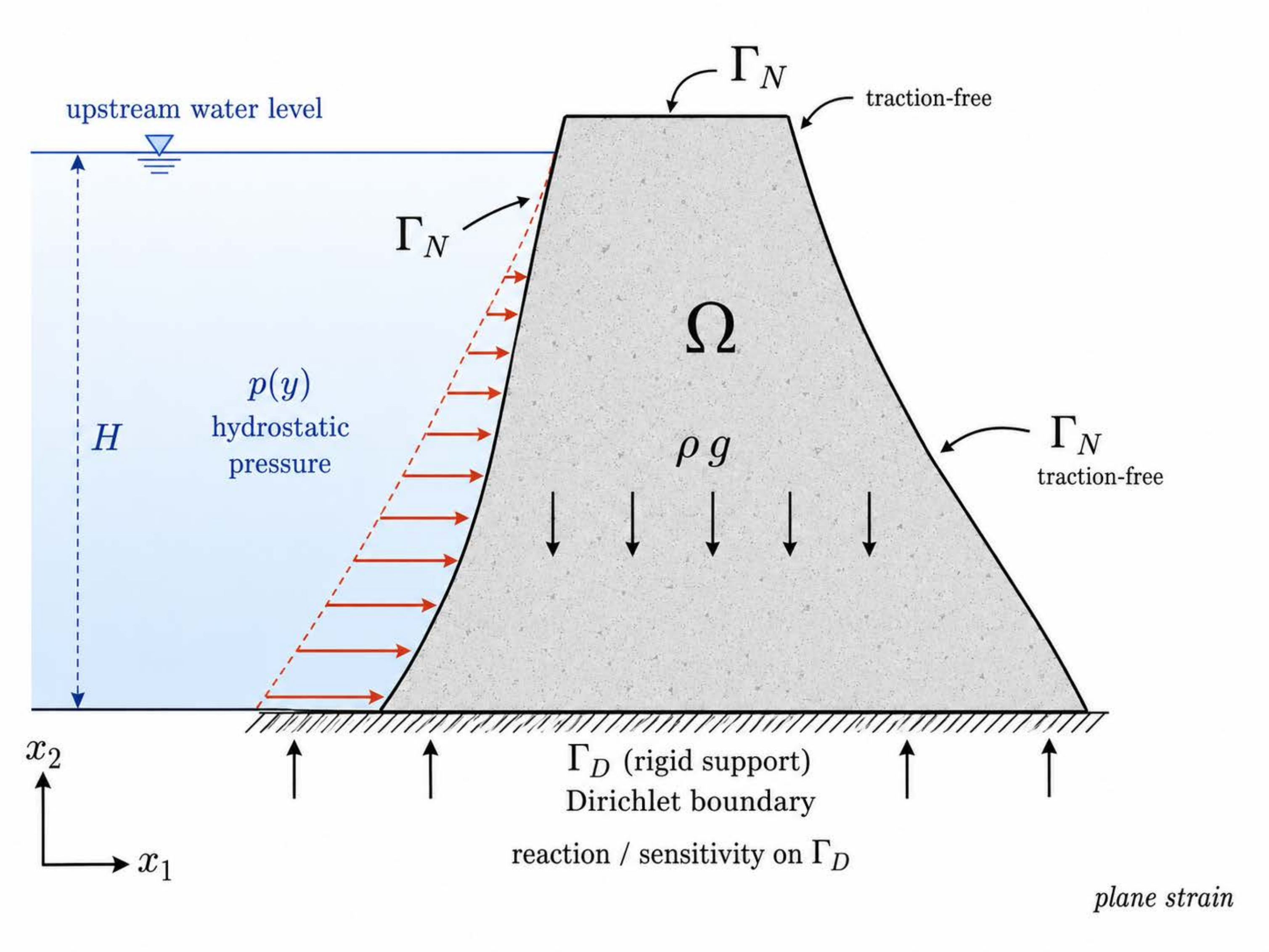}
  \caption{Plane-strain gravity-dam geometry used in the Hookean example. The base $\Gamma_D$ is clamped, the remaining boundary is $\Gamma_N$, hydrostatic pressure acts on the upstream face, and gravity acts in the interior.}
  \label{fig:dam-hookean-schematic}
\end{figure}

\begin{figure}[h!]
  \centering
  \includegraphics[width=0.96\textwidth]{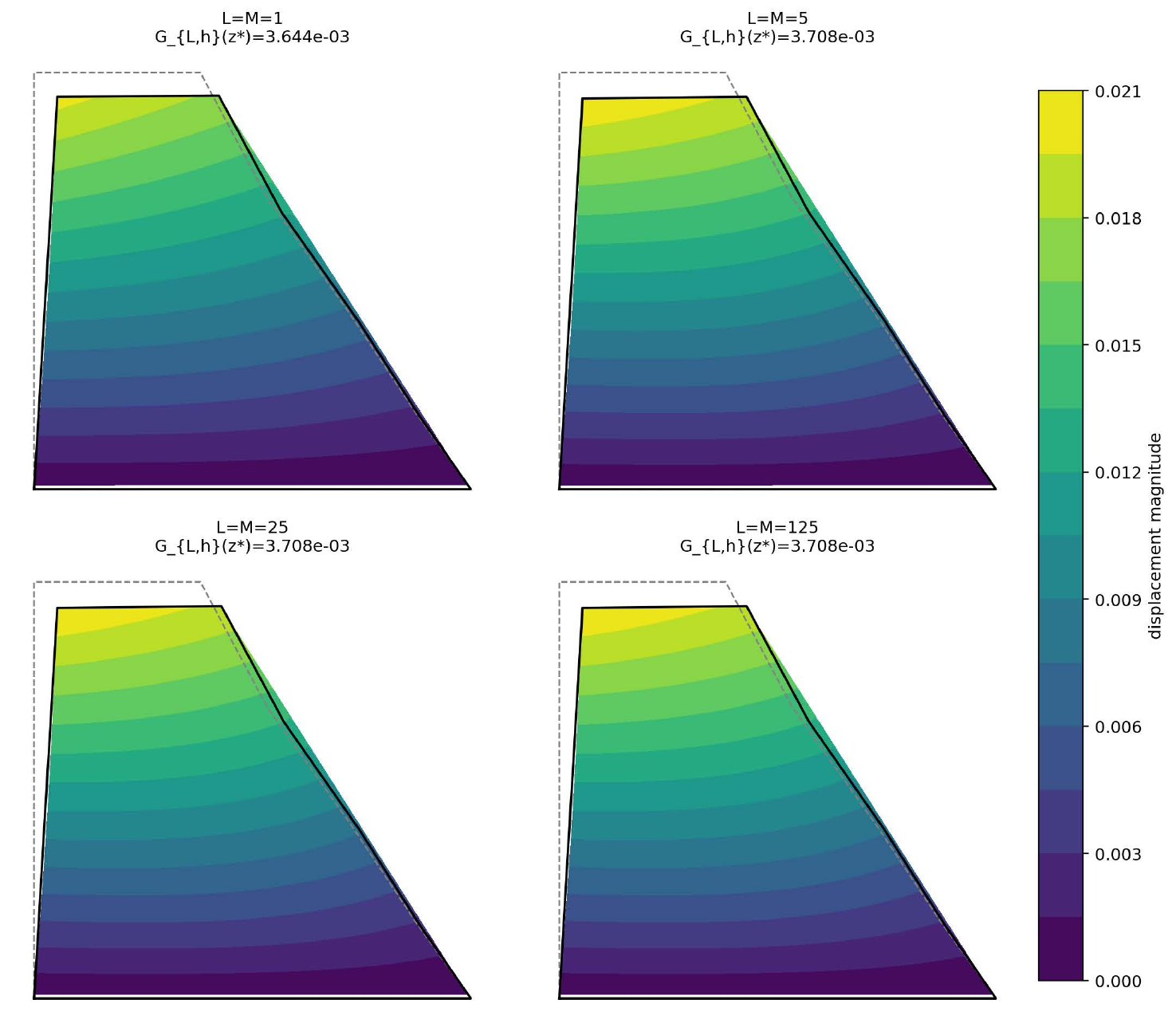}
  \caption{Queried displacement readouts for $L=M=1,5,25,125$. The contours show $|u_L|$ on the deformed configuration, with a common visual magnification. The dashed outline is the reference configuration.}
  \label{fig:dam-hookean-displacement-sequence}
\end{figure}

\begin{figure}[h!]
  \centering
  \includegraphics[width=0.96\textwidth]{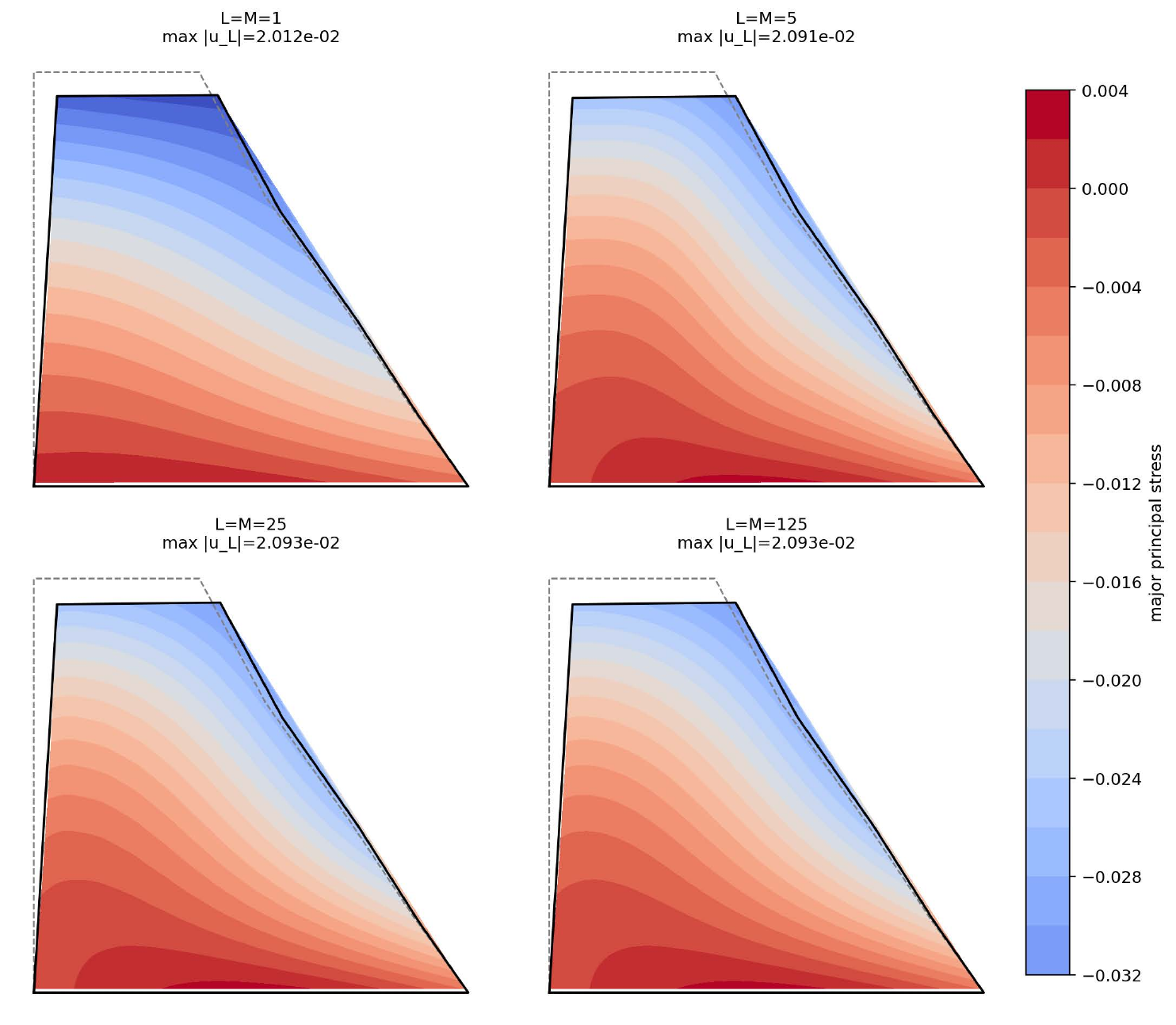}
  \caption{Major principal stress fields associated with the same queried sequence. The Hookean stress is obtained from the recovered displacement readout through \eqref{eq:dam-hookean-stress}. The stress field is smoother than in the subquadratic illustration because the constitutive law is linear and the displayed high-frequency content is filtered by the rapidly decaying manufactured dictionary.}
  \label{fig:dam-hookean-principal-stress-sequence}
\end{figure}

\begin{figure}[h!]
  \centering
  \includegraphics[width=0.96\textwidth]{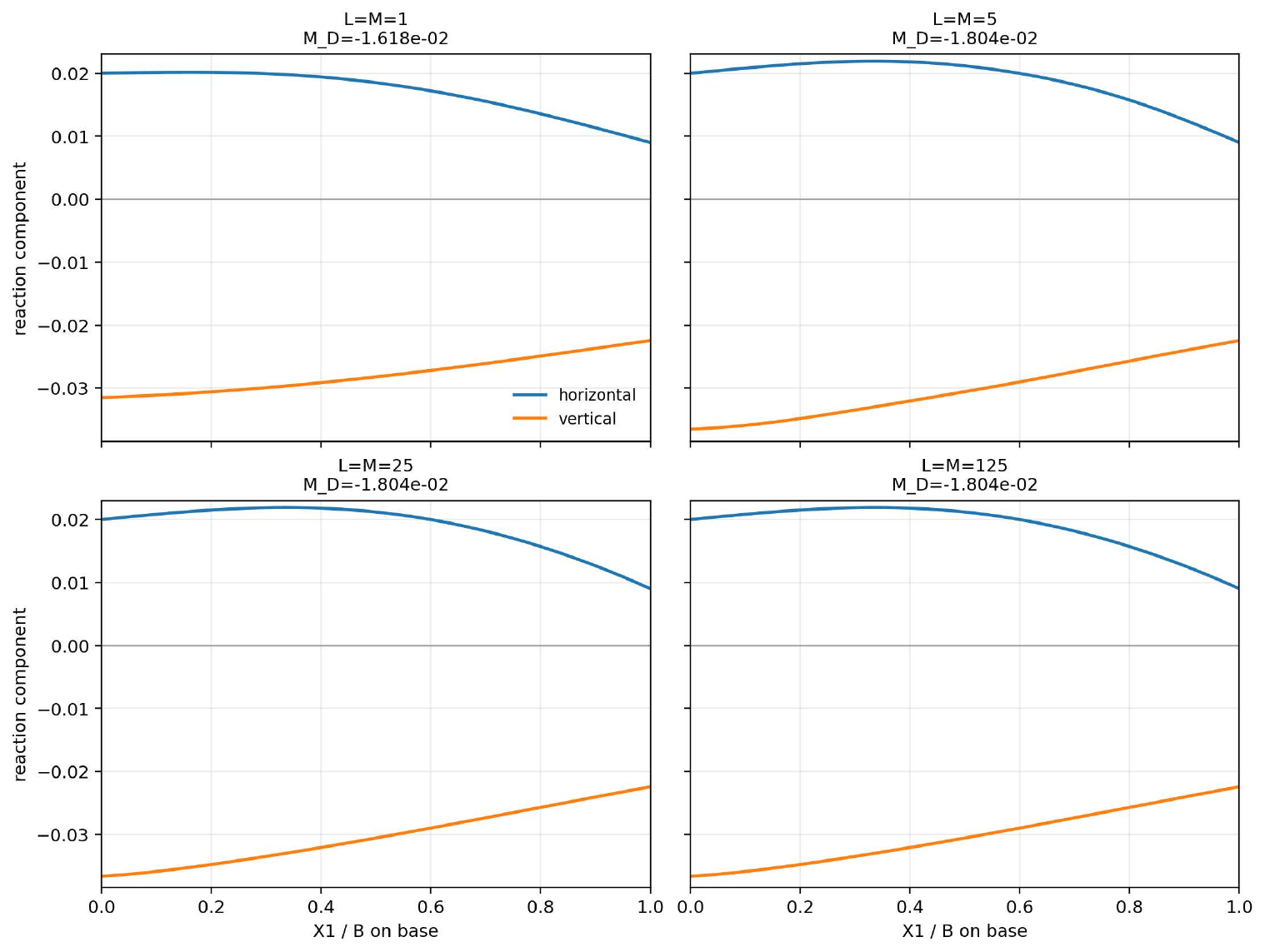}
  \caption{Recovered base reactions $R_L$ for $L=M=1,5,25,125$. The horizontal and vertical components are shown along $\Gamma_D$, together with the resultant overturning moment $M_D=\int_{\Gamma_D}(B-X_1)(R_L)_2\,ds$ about the downstream toe.}
  \label{fig:dam-hookean-base-reaction-sequence}
\end{figure}

The reference configuration is the polygonal cross section shown in Fig.~\ref{fig:dam-hookean-schematic}, with vertices: $(0,0)$, $(1.05,0)$, $(0.92,0.18)$, $(0.76,0.42)$, $(0.56,0.70)$, $(0.40,1.0)$, $(0,1.0)$, in counterclockwise order. The height is $H=1$ and the base width is $B=1.05$. The base $\Gamma_D=\{(X_1,0):0<X_1<B\}$ is clamped, while the remaining boundary is Neumann. The load class used for the value map is the compact two-parameter family
\begin{equation}
    K_{\rm dam}
    :=
    \{(\alpha f_h+\beta f_b,0):\ \alpha\in[0.8,1.4],\ \beta\in[0.25,0.45]\}
    \subset Z,
\end{equation}
where $f_h$ is the hydrostatic load on the upstream face and $f_b$ is the vertical body-force load. The query datum used in the figures is $z_*=(f_*,0)$, $f_*=f_h+f_b$, with traction and body force
\begin{equation}
    t_h(X_2)=p_0\Bigl(1-\frac{X_2}{H}\Bigr)e_1,
    \quad
    b=-\rho g\,e_2,
\end{equation}
where $p_0=1$ and $\rho g=0.30$.

The material is isotropic Hookean in the small-strain sense. Thus, for $A=e(u)$,
\begin{equation}\label{eq:dam-hookean-W}
    W(A)=\mu |\operatorname{dev}A|^2+\frac{\kappa}{2}(\operatorname{tr}A)^2,
\end{equation}
with shear modulus $\mu=1$ and bulk modulus $\kappa=2$. The corresponding Cauchy stress in the linearized model is
\begin{equation}\label{eq:dam-hookean-stress}
    \sigma(A)=2\mu\operatorname{dev}A+\kappa(\operatorname{tr}A)I.
\end{equation}
This choice is the Hilbert--Hookean case. In particular, the solution operator is affine in the data and the value functional is quadratic on the data space. The example therefore avoids the additional numerical complications caused by nonsmooth or subquadratic tangent behavior.

For each bandwidth $L=M$, we assemble a finite saddle approximation using base-compatible displacement atoms and base reaction atoms. The displacement dictionary contains low-order polynomial seed modes together with Fourier corrections of the form
\begin{equation} \label{ze4a0l}
  \phi_m^{(1)}(X)=\frac{X_2}{H}\sin \Bigl(\frac{m\pi X_1}{B}\Bigr)e_1,
  \quad
  \phi_m^{(2)}(X)=\frac{X_2}{H}\cos \Bigl(\frac{(m-\tfrac12)\pi X_1}{B}\Bigr)e_2,
\end{equation}
used in the two displacement components. In this example it is expedient to use a structured finite reaction dictionary on the Dirichlet boundary; it is a concrete implementation of the saddle architecture rather than the full automatic reaction-norming construction of Lemma~\ref{lem:automatic-finite-reaction-norming}. Specifically, the reaction side is represented componentwise on $\Gamma_D$ by the one-dimensional Fourier atoms
\begin{equation}
  \psi_m(X_1)=\cos \Bigl(\frac{m\pi X_1}{B}\Bigr).
\end{equation}
The manufactured displacement--reaction pairs determine the saddle payoffs, labels, and coupling coefficients. Once the finite saddle game has been assembled on $K_{\rm dam}$, the particular state associated with $z_*$ is obtained by evaluating the value map and reading off the active saddle subgradient and supergradient pair.

\begin{table}[h!]
  \centering
  \caption{Query values and queried mechanical readouts for the Hookean dam example. The finite saddle architecture is built once for each bandwidth pair and then evaluated at $z_*$.}
  \label{tab:dam-hookean-results}
  \small
  \begin{tabular}{c c c c c}
    \hline
    $L=M$ & dofs & $\mathcal G_{L,h}(z_*)$ & $\max_{\Omega}|u_L|$ & moment $M_D$\\
    \hline
    1 & 2 & 3.64383e-03 & 2.012e-02 & -1.61756e-02\\
    5 & 10 & 3.70847e-03 & 2.091e-02 & -1.80364e-02\\
    25 & 50 & 3.70833e-03 & 2.093e-02 & -1.80367e-02\\
    125 & 250 & 3.70833e-03 & 2.093e-02 & -1.80367e-02\\
    \hline
  \end{tabular}
\end{table}

The sequence in Figs.~\ref{fig:dam-hookean-displacement-sequence}--\ref{fig:dam-hookean-base-reaction-sequence} should be read as a sequence of queries of increasingly rich saddle architectures, not as four unrelated forward solves. The scalar value, displacement readout, and reaction readout are all returned by the same value-map representation. As the atom bandwidth is increased, the displacement field and the base reaction settle rapidly, while the stress field gains moderate detail near the upstream face and the support. This behavior is consistent with the compactness--density theorem: the Fourier manufactured dictionaries approximate the paired displacement--reaction support set, and the cell-center quadrature contributes only a vanishing payoff error. The Hookean setting makes this example especially transparent because the value map is quadratic and the solution operator is affine in the load data.

\section{Summary and conclusions}

We have developed a variational and approximation framework for the negative minimum-potential value functional of linearized elasticity with generalized loads and prescribed Dirichlet data. The key modeling distinction is between the physical external-load space ${L}_{\rm ext}$, which excludes independently prescribed tractions on $\Gamma_D$, and the larger ambient dual $X^*$ used for analysis and for augmented manufacturing. A bounded lifting of the Dirichlet trace converts the moving admissible class $X_g$ into a fixed zero-trace space, and the lift-load term $\langle f,\Lift g\rangle_{X^*,X}$ is retained so that the reduced problem has the same value as the original mixed-boundary problem.

We restrict attention to linearized kinematics for both analytic and structural reasons. The corresponding finite-kinematics boundary-value theory remains substantially more delicate, with several foundational questions still open \cite{Ball2002,BallMarsden1984}. The restriction to linearized kinematics has two well-known consequences that are used throughout. First, the mechanical variable is the displacement $u$, and the stored energy depends on $u$ only through the infinitesimal strain $e(u):=\operatorname{sym}Du=(Du+Du^T)/2$. Second, superposed rigid-body motions are infinitesimal rigid displacements $r(x)=a+Sx$, $a\in\mathbb R^d$, $S^T=-S$, for which $e(r)=0$. Requiring invariance under such motions is therefore equivalent to formulating the energy as a functional of $e(u)$ alone. In mixed problems with a nontrivial Dirichlet part, the boundary datum fixes these rigid motions. In full-Neumann problems they must be quotiented out or fixed by a gauge, and loads must be balanced against every rigid motion.

The analysis shows that load data and Dirichlet data play different structural roles. With $g$ fixed, the value functional is convex in $f$ because it is a supremum of affine load functionals. With $f$ fixed, it is concave in $g$ because it is the negative of the minimum of a convex functional over an affine trace class. Thus, the full mixed-boundary map is generally neither jointly convex nor jointly concave; it has a saddle geometry. This observation determines the architecture: load-only value maps admit a single convex maxout envelope, whereas full mixed-boundary data are naturally represented by a finite max--min game over displacement atoms and reaction atoms. The coupling matrix is not an arbitrary parameter matrix but the variational pairing $-R_j[T_Du_i]$.

The subgradient and supergradient formulae provide the mechanical interpretation of the saddle slopes. Variation with respect to the generalized load is represented by equilibrium displacements, while variation with respect to the prescribed Dirichlet displacement is represented by Dirichlet reactions in the sign convention used here. The equilibrium displacement set $U_K\subset X$ and the residual trace sets generated by its finite convex approximants are therefore the natural objects to control when selecting atoms. The compactness--density theorem of Section~6 makes this precise: convergence of displacement dictionaries to $U_K$, together with finite reaction norming and payoff consistency, implies convergence of the value functional on compact data classes, and the cell-center quadrature analysis shows that quadrature refinement does not spoil that convergence, provided the finite payoff family, including the chosen representation of prescribed external loads, is uniformly consistent. In quadratic Hookean settings the nonconstant part of the support map is linear, so optimal displacement--reaction feature spaces are characterized by singular-value, eigenvalue, entropy, or proper-orthogonal-decomposition constructions \cite{KolmogorovTikhomirov1961,Sirovich1987,BerkoozHolmesLumley1993}. The balanced full-Neumann compliant modes are recovered by deleting the reaction component.

Manufactured solutions simplify in the augmented generalized-load formulation. For every admissible manufactured displacement $u$ and every prescribed reaction $R\in G_D^*$, the datum
\begin{equation}
    f_w=D\mathcal E(u)+T_D^*R,
    \quad
    g_w=T_Du
\end{equation}
turns $u$ into the exact equilibrium displacement and $R$ into the exact Dirichlet reaction. The associated value label is
\begin{equation}
    \mathcal G(f_w,g_w)=D\mathcal E(u)[u]+R[g_w]-\mathcal E(u).
\end{equation}
The canonical choice $R=0$ recovers a standard manufactured load but does not sample nonzero Dirichlet-reaction slopes. Nonzero reactions are physical external-load samples only when $f_w\in{L}_{\rm ext}$; otherwise they should be understood as augmented samples designed to expose the concave Dirichlet side of the value map. This is the main difference from traditional MMS, V\&V, and UQ workflows: the manufactured fields are not used primarily to verify a forward solver, but to generate exact value-and-support data for a learner.

This distinction also clarifies the relation to immersed methods. Conventional immersed-boundary, immersed finite-element, cut finite-element, and mesh-free methods solve boundary-value problems on background grids or nonconforming approximation spaces. The present construction may use the same background grids, CAD descriptions, point clouds, quadrature rules, and Fourier or kernel features, but its target is a learned value functional whose derivatives deliver displacement and reaction information. In this sense the method is immersed in its representation but variational and learning-based in its objective.

The formulation is well suited to CAD-defined and mesh-free settings because it is expressed through domain data, volumetric energy densities and volumetric duality pairings rather than through a particular mesh. In particular, the formulation obviates the need for numerical quadrature rules over the boundary. Local correction terms, such as (\ref{ze4a0l}), and \emph{ad hoc} feature maps may be added from immersed sampling over the CAD volume and CAD-compatible basis functions fitted, e.g., to interfaces, sharp edges and corners, in order to enhance accuracy. Establishing sharp approximation rates and stability estimates for such CAD-induced features, especially in the presence of trimming, edges, corners, material interfaces, contact, and topological transitions, remains an important but challenging direction for future work.

\section*{Acknowledgements}

MO gratefully acknowledges the financial support of the {\sl Centre Internacional de M\`etodes Num\`erics a l'Enginyeria} (CIMNE) of the {\sl Universitat Politecnica de Catalunya} (UPC), Spain, through the {\sl UNESCO Chair in Numerical Methods in Engineering}.

\end{document}